\newtheorem{theorem}{Theorem}
\newtheorem{remark}[theorem]{Remark}
\newtheorem{proposition}[theorem]{Proposition}
\newtheorem{lemma}[theorem]{Lemma}
\newcommand {\p}{\partial}
\numberwithin{equation}{section}
\numberwithin{theorem}{section}
\begin{document}
\title[The homogeneous k-Hessian equation]{The  Dirichlet problem of the homogeneous $k$-Hessian equation in a punctured domain}
\author{Zhenghuan Gao}
\address{Department of Mathematics, Shanghai University, Shanghai, 200444, China}
\email{gzh@shu.edu.cn}
\author{Xi-Nan Ma} 
\address{School of Mathematical Sciences, University of Science and Technology of China, Hefei 230026,
	Anhui Province, China}
\email{xinan@ustc.edu.cn}

\author{Dekai Zhang}  
\address{Department of Mathematics, Shanghai University, Shanghai, 200444, China}
\email{dkzhang@shu.edu.cn}

\begin{abstract}
In this paper, we consider the  Dirichlet problem for the homogeneous $k$-Hessian equation with prescribed asymptotic behavior at $0\in\Omega$ where  $\Omega$ is a $(k-1)$-convex bounded domain in the Euclidean space.  The prescribed asymptotic behavior at $0$ of the solution is zero if $k>\frac{n}{2}$, it is $\log|x|+O(1)$ if $k=\frac{n}{2}$ and   $-|x|^{\frac{2k-n}{n}}+O(1)$ if $k<\frac{n}{2}$. 
To solve this problem, we consider the Dirichlet problem of the approximating $k$-Hessian equation in $\Omega\setminus \overline{B_r(0)}$ with $r$ small. 
We firstly construct the subsolution of the approximating $k$-Hessian equation. Then we  derive the pointwise $C^{2}$-estimates of the approximating equation based on  new  gradient and second order estimates established previously by the second author and the third author. In addition, we  prove a uniform positive lower bound of the gradient if the domain is  starshaped with respect to $0$. As an application, we prove an identity along the level set of the approximating solution and obtain a nearly monotonicity formula. In particular, we get a weighted geometric inequality for smoothly and strictly $(k-1)$-convex  starshaped closed hypersurface in $\mathbb R^n$ with $\frac{n}{2}\le k<n$.

\end{abstract}
\maketitle.
\tableofcontents 

\section{Introduction}
Let  $\Omega$ be a bounded domain in $\mathbb{R}^n$ and $u\in C^2(\Omega)$. The $k$-Hessian operator $F_k[u]$ is defined by 
\begin{align}
	F_k[u]:=S_k(D^2 u),
\end{align}
where $S_k(D^2 u)$ is the sum of all 
principal $k\times k$ minors of $D^2 u$. If $\lambda=(\lambda_1,\cdots,\lambda_n)$ are the eigenvalues of $D^2u$, one can see that $S_k(D^2 u)=\sum_{1\le i_1<\cdots<i_k\le n} \lambda_{i_1}\cdots\lambda_{i_k}$.

Caffarelli-Nirenberg-Spruck \cite{CNSIII} solved  the following Dirichlet problem for the $k$-Hessian equation
\begin{align}\label{khessian}
	\left\{\begin{aligned}
		S_k(D^2 u) =&f \qquad\text{in} \quad \Omega,\\
		u=&\varphi \qquad\text{on} \quad \partial\Omega,
	\end{aligned}
	\right.
\end{align}
where $f>0$ and $\varphi$ are given smooth functions.
By assuming the existence of a subsolution, Guan \cite{guan1994cpde, guan2014duke} solved \eqref{khessian}.

For the degenerate case i.e. $f\ge 0$,
Wang \cite{Wang1994} solved  the Dirichlet problem: $S_k(D^2u)=f(x,u)$ in $\Omega$, $u=0$ on $\partial{\Omega}$ and proved the Sobolev-type inequality for the related functional $\int_{\Omega}u S_{k}(D^2 u) dx$.

Wang-Chou \cite{cw2001cpam} used the parabolic method to prove the existence of $k$-convex solutions $u$ to the problem  $S_k(D^2 u)=f(x,u)$ in $\Omega$, $u=0$ on $\partial{\Omega}$, where $\Omega$ is strictly $(k-1)$-convex. In \cite{cw2001cpam}, Wang-Chou established the important Pogorelov type second order estimate for the $k$-Hessian equation.

 Krylov\cite{krylov1989, krylov1994}	proved the $C^{1,1}$ regularity of the  problem: $S_k(D^2 u)=f(x)$ in $\Omega$ and $u=\varphi$ on $\partial \Omega$ by assuming $f^{\frac{1}{k}}\in C^{1,1}$ , $\varphi\in C^2$ and $(k-1)$-convexity of $\Omega$. Ivochina-Trudinger-Wang\cite{itw2004cpde} gave a new and simple proof.
Li-Luc\cite{LiLuc} studied the existence and uniqueness of the Green's function for the nonlinear Yamabe equation.

In the seminal papers\cite{trudingerwang1997tmna,tw1999am,trudingerwang2002jfa}, Trudinger-Wang studied systematically the Hessian measure for the $k$-convex function in $\mathbb{R}^n$ where they only assume that the function  was continuous, locally bounded and locally integrable respectively. Labutin \cite{labutin2002duke} continued to  study the potential theory of the $k$-Hessian measure. 

The  fundamental solutions of the $k$-Hessian equation are as follows
\begin{align}
	G_{k}(x)=\left\{
		\begin{aligned}
			-|x|^{2-\frac{n}{k}}&\quad\text{if}\quad k<\frac{n}{2},\\
			\log|x|&\quad\text{if}\quad k=\frac{n}{2},
			\\
			|x|^{2-\frac{n}{k}}&\quad\text{if}\quad k>\frac{n}{2}.
			\end{aligned}
		\right.
	\end{align}

In this paper, we want to study the regularity problem for the homogeneous $k$-Hessian equation in $\Omega\setminus{0}$.

In the complex Euclidean space, 
 Klimek \cite{Klimek1985} introduced the extremal fucntion
$$g_\Omega(z,z_0)=\sup\{v\in \mathcal{PSH}(\Omega):v<0,\ v(z)\leq \log|z-z_0|+O(1)\}.$$
$g_\Omega(z,\xi)$ is  called the pluricomplex Green function on $\Omega\subset\mathbb{C}^n$ with a logarithminc pole at $z_0$. If $\Omega$ is hyperconvex, Demailly \cite{Demailly1987} showed that $u(z)=g_\Omega(z,z_0)$ is continuous and solves uniquely the following homogeneous complex Monge-Amp\`ere equation
\begin{align}\label{eqMA}
	\begin{cases}
		(dd^cu)^n=0&\quad\text{in }\Omega\setminus\{z_0\},\\
		u=0&\quad\text{on }\partial\Omega,\\
		u(z)=\log|z-z_0|+O(1)&\quad\text{as }z\rightarrow z_0.
	\end{cases}
\end{align}
If $\Omega$ is strictly convex  with smooth boundary, Lempert \cite{Lempert1981} proved the solution is smooth. For the strongly pseudonconvex case, B. Guan \cite{GuanBo1998} proved $ C^{1,\alpha}$ regularity and later, B\l ocki \cite{Blocki2000}  showed the  $C^{1,1}$ regularity. The $C^{1,1}$ regularity is optimal by the counterexamples  by Bedford-Demailly \cite{BedfordDemailly1988}, . 

P. Guan \cite{gpf2002am} established the $C^{1,1}$ regularity of the extremal function associated to intrinsic norms of Chen-Levine-Nirenberg \cite{CLN1969} and Beford-Taylor \cite{BedfordTaylor1979} where the extremal function solves
\begin{align*}
	\begin{cases}
		(dd^cu)^n=0&\quad\text{in }\Omega_0\setminus(\cup_{i=1}^m \Omega_i),\\
		u=0&\quad\text{on }\partial\Omega_i, \ i=1,\cdots,n\\
		u=1&\quad\text{on }\partial\Omega_0.
	\end{cases}
\end{align*}

\subsection{Our main results}
Motivated by Labutin's work \cite{labutin2002duke} and Guan's work \cite{GuanBo1998}, we consider the following  Dirichlet problem for the homogeneous $k$-Hessian equation with interior isolated singularities. For convenience,  we assume the singularity is  $0\in \Omega$ and  there exists positive constants $r_0, {R_0}$ such that $B_{r_0}\subset\subset\Omega\subset\subset B_{ {R_0} }$, where $B_r$ and $B_{ {R_0}}$ are balls centered at $0$ with radius $r$ and ${R_0}$ respectively. 

We divide three cases to state our main results.

\subsubsection{\emph{\textbf{Case1:}} $k>\frac{n}{2}$}
In this case, since the fundamental solution of the homogeneous $k$-Hessian equation is $|x|^{2-\frac{n}{k}}$ which tends to $0$ as $x\rightarrow 0$, we consider the following problem
\begin{equation}\label{case2Equa1.2}
	\left\{
	\begin{aligned} S_{k}(D^2 u)=0 \qquad\ \ \ &\text{in}\quad \Omega\setminus\{0\},\\
		u=1\qquad \qquad \
		\  \  &\text{on}\ \ \  \partial\Omega,\\
		\lim\limits_{|x|\rightarrow 0}u(x)=0.\quad\qquad&
	\end{aligned}
	\right.
\end{equation}
We prove the following uniqueness and existence result.
\begin{theorem}\label{main07201}
	Assume $ k>\frac{n}{2}$. Let $\Omega$ be a smoothly convex domain in $\mathbb{R}^n$ and strictly $(k-1)$-convex. There exists a unique  $k$-convex solution $u\in C^{1,1}(\overline{\Omega^c})$ of the equation \eqref{case2Equa1.2}. Moreover, there exists uniform constant $C$ such that for any $x\in \Omega^c$ the following holds
	\begin{align}\label{decay20720}
		\left\{
		\begin{aligned}
			C^{-1}|x|^{\frac{2k-n}{k}}\le u(x)\le& C|x|^{\frac{2k-n}{k}},\\
			 |Du|(x)\le& C|x|^{\frac{k-n}{k}},\\
			|D^2u|(x)\le& C|x|^{-\frac{n}{k}}.
		\end{aligned}
		\right.
	\end{align}
\end{theorem}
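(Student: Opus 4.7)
The plan is to obtain $u$ as a $C^{1,\alpha}_{\mathrm{loc}}$-limit of classical solutions of nondegenerate Dirichlet problems on annular domains $\Omega_r := \Omega\setminus\overline{B_r(0)}$ as $r\to 0^+$, the strategy announced in the abstract. I read the symbol $\overline{\Omega^c}$ in the theorem statement as $\overline{\Omega}\setminus\{0\}$, consistent with equation \eqref{case2Equa1.2} being posed on the punctured interior $\Omega\setminus\{0\}$ with $u=1$ on $\partial\Omega$ and $\lim_{|x|\to 0}u=0$, and with the exponents in \eqref{decay20720} matching the scaling of the fundamental solution $|x|^{(2k-n)/k}$ as $x\to 0$.

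For small $r,\varepsilon>0$, I would solve
\begin{equation*}
S_k(D^2 u_{r,\varepsilon})=\varepsilon^k \text{ in } \Omega_r,\quad u_{r,\varepsilon}=1 \text{ on }\partial\Omega,\quad u_{r,\varepsilon}=r^{(2k-n)/k} \text{ on }\partial B_r(0),
\end{equation*}
in the $k$-convex class. Both components of $\partial\Omega_r$ are strictly $(k-1)$-convex from the $\Omega_r$-side, and a smooth $k$-convex subsolution is obtained by gluing an affine combination of $|x|^{(2k-n)/k}$ near the inner boundary to a quadratic perturbation of the outer defining function; P.~Guan's theorem \cite{guan1994cpde, guan2014duke} then produces $u_{r,\varepsilon}\in C^\infty(\Omega_r)\cap C^{1,1}(\overline{\Omega_r})$, and sending $\varepsilon\to 0$ gives $u_r$. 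Since $h(x):=|x|^{(2k-n)/k}$ satisfies $S_k(D^2 h)=0$ on $\mathbb{R}^n\setminus\{0\}$ when $k>n/2$, affine radial combinations $\alpha h+\beta$ matched to the prescribed Dirichlet data on each of $\partial\Omega$ and $\partial B_r(0)$ sandwich $u_{r,\varepsilon}$, by comparison, between $C^{-1}|x|^{(2k-n)/k}$ and $C|x|^{(2k-n)/k}$ with $C$ independent of $r,\varepsilon$; this gives the first line of \eqref{decay20720}.

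For the Lipschitz and Hessian bounds, I would rescale around any $x\in\Omega_r$ by setting $v(y):=\rho^{-(2k-n)/k} u_{r,\varepsilon}(x+\rho y)$ on $|y|<1$ with $\rho:=|x|/4$. A direct computation yields $S_k(D^2 v)=\rho^n\varepsilon^k$ and $|v|\le C$ from the preceding step. Applying the pointwise gradient and second-order estimates of the second and third authors (as referenced in the abstract) to $v$ gives $|Dv|,|D^2 v|\le C$; unscaling produces $|Du_{r,\varepsilon}|(x)\le C|x|^{(k-n)/k}$ and $|D^2 u_{r,\varepsilon}|(x)\le C|x|^{-n/k}$, uniformly in $r,\varepsilon$. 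The boundary $C^{1,1}$ estimate on $\partial\Omega$ is the classical consequence of strict $(k-1)$-convexity \cite{CNSIII,guan1994cpde}. A diagonal subsequence then yields $u_{r,\varepsilon}\to u\in C^{1,1}(\overline{\Omega}\setminus\{0\})$ in $C^{1,\alpha}_{\mathrm{loc}}$; the limit is $k$-convex, solves $S_k(D^2 u)=0$ classically, matches the boundary data and the singular limit at $0$, and satisfies \eqref{decay20720}. Uniqueness follows from the standard $k$-Hessian comparison principle: for two such solutions $u_1,u_2$, the perturbed function $u_2+\varepsilon(|x|^2-C_0)$ with $C_0$ large is a strict $k$-subsolution lying strictly below $u_1$ on $\partial\Omega$ and near $0$ (where the additive constant dominates the common leading asymptotic); comparison gives $u_2-\varepsilon C_0\le u_1$ in $\Omega\setminus\{0\}$, and $\varepsilon\to 0$ together with swapping the roles of $u_1,u_2$ yields $u_1=u_2$.

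The principal obstacle is the uniformity in $r,\varepsilon$ of the Hessian bound $|D^2 u_{r,\varepsilon}|\le C|x|^{-n/k}$ in the middle step. Classical Pogorelov-type second-order estimates depend on $C^0$ bounds that blow up at $0$, so preserving the exact exponent requires the punctually scale-invariant form of the gradient and Hessian estimates of the second and third authors invoked above; any logarithmic or polynomial loss in those estimates would prevent the limit from inheriting the prescribed singular decay at $0$.
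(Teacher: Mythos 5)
Your overall scheme (approximate on $\Omega_r$, construct a glued subsolution, use $C|x|^{(2k-n)/k}$ barriers for the zeroth-order bound, pass to a diagonal limit, uniqueness by comparison) matches the paper's in outline, and your radial sandwich for the $C^0$ estimate is essentially what the paper does with $w$ and $\bar u = (|x|/r_0)^{2-n/k}$. But the central step -- the gradient and Hessian bounds -- has a real gap. You rescale $v(y)=\rho^{-(2k-n)/k}u(x+\rho y)$ on $B_1$ and propose to apply ``the pointwise gradient and second-order estimates of the second and third authors'' to $v$. Those estimates, as stated and used in the paper (Theorems in Section~2.5--2.6, from \cite{MaZhang2022arxiv}), are not interior estimates: they bound weighted quantities $P=|Du|^2u^{2(n-k)/(2k-n)}$ and $G=u_{\xi\xi}\varphi(P)h(u)$ by their values on $\partial U$, i.e.\ they are maximum-principle reductions to the boundary. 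Applying them to $v$ on $B_1$ would require controlling $P(v)$ or $G(v)$ on $\partial B_1$, which is exactly the quantity you are trying to estimate. And there is no substitute: genuine interior $C^2$ estimates for $S_k(D^2v)=f$, $2\le k\le n$, with bounds only in terms of $\|v\|_{C^1}$, do not hold (the Pogorelov-type second-order estimates of Chou--Wang require $v$ to vanish on the boundary, so they are not scale-invariant on balls). Thus the step ``$|Dv|,|D^2v|\le C$ on $B_{1/2}$'' does not follow, and the caveat you raise at the end is not merely technical -- it is the heart of the matter. A further point: even if interior estimates were available, your choice $\rho=|x|/4$ does not keep $B_\rho(x)\subset\Omega_r$ when $|x|$ is comparable to $r$, so you would still need separate boundary estimates on $\partial B_r$; the limit is taken with the estimates holding uniformly up to $\partial\Omega_r$.

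What the paper does instead is structurally different at this step. The weights in $P$ and $G$ are tuned so that (i) $\max_{\Omega_r}P\le\max_{\partial\Omega_r}P$ and $\max_{\Omega_r}G\le C+\max_{\partial\Omega_r}G$ hold as global maximum-principle inequalities (no interior estimate), and (ii) the boundary values on $\partial B_r$ are uniformly bounded as $r\to0$ because the power of $u$ exactly cancels the blow-up of $|Du|$ and $|D^2u|$. The boundary gradient estimate on $\partial B_r$ is proved via the rescaling $\tilde u(y)=r^{n/k-2}u(ry)$ to the fixed annulus $B_2\setminus B_1$ with explicit harmonic barriers, and the boundary Hessian estimate via the tangential computation $u_{\alpha\beta}=r^{-1}u_\nu\delta_{\alpha\beta}$, a barrier for the tangential-normal term $T_\alpha u$, and the equation for the double-normal term. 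If you want to salvage a rescaling approach, you must prove an interior estimate that the Ma--Zhang result does not provide, or else adopt the paper's boundary-reduction plus boundary-barrier strategy; as written the proposal does not close.
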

\subsubsection{\emph{\textbf{Case2:}} $1\le k<\frac{n}{2}$}
We consider the following problem
\begin{equation}\label{case1Equa1.1}
\left\{\begin{aligned} S_{k}(D^2 u)=0 \qquad&\text{in}\quad \Omega\setminus\{0\},\\
u=-1\quad &\text{on}\ \partial\Omega,\\
u(x)=-|x|^{2-\frac{n}{k}}+O(1)\quad &\text{as}\ x\rightarrow 0.\end{aligned}
\right.
\end{equation}
If we  prescribe  $u=-C_0|x|^{2-\frac{n}{k}}+O(1)$ as $x\rightarrow 0$ for some positive constant $C_0$, then $\tilde u=C_0^{-1} u+C_0^{-1}-1$ solves \eqref{case1Equa1.1}. 
\begin{theorem}\label{main07202}
Assume $1\le k<\frac{n}{2}$. Let $\Omega$ be a smoothly, strictly $(k-1)$-convex domain in $\mathbb{R}^n$. There exists a unique  $k$-convex solution $u\in C^{1,1}(\overline{\Omega}\setminus\{0\})$ of the equation \eqref{case1Equa1.1}. Moreover, there exists uniform constant $C$ such that for any $x\in \overline\Omega\setminus\{0\}$,  the following holds
\begin{align}\label{decay10720}
\left\{
\begin{aligned}
 \Big|u(x)-|x|^{-\frac{n-2k}{k}}\Big|\le& C,\\
 |Du|(x)\le& C|x|^{-\frac{n-k}{k}},\\
|D^2u|(x)\le& C|x|^{-\frac{n}{k}}.
\end{aligned}
\right.
\end{align}
\end{theorem}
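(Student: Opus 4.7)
The plan is to follow the approximation scheme outlined in the abstract. For small $r>0$, I solve the auxiliary Dirichlet problem
\begin{equation*}
\left\{
\begin{aligned}
S_k(D^2 u_r) &= 0 && \text{in } \Omega_r := \Omega \setminus \overline{B_r(0)},\\
u_r &= -1 && \text{on } \partial\Omega,\\
u_r &= -r^{-(n-2k)/k} && \text{on } \partial B_r,
\end{aligned}
\right.
\end{equation*}
whose boundary trace on $\partial B_r$ is tuned to the radial fundamental profile $w(x):=-|x|^{-(n-2k)/k}$. Because $w$ is $k$-admissible and solves $S_k(D^2 w)=0$ in $\mathbb{R}^n\setminus\{0\}$, it serves as the natural model. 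A $k$-convex classical solution $u_r$ of the auxiliary problem is produced by \cite{CNSIII,guan2014duke} once a subsolution is exhibited; the singular solution is then recovered as $u=\lim_{r\to 0}u_r$.

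\textbf{Subsolution and two-sided barrier.} I take $\underline{u}_r(x)=w(x)+A\phi(x)+B$, where $\phi\in C^2(\overline{\Omega})$ is strictly $k$-subharmonic (existing thanks to the strict $(k-1)$-convexity of $\partial\Omega$), and $A,B$ are $r$-independent constants chosen so that $\underline{u}_r\le u_r|_{\partial\Omega_r}$; this makes $\underline{u}_r$ a subsolution of the auxiliary problem. A matching upper barrier $\overline{u}_r$ is built from the same model $w$ plus an additive $k$-superharmonic correction so that $\overline{u}_r\ge u_r|_{\partial\Omega_r}$. The comparison principle of Trudinger-Wang \cite{trudingerwang1997tmna} yields $\underline{u}_r\le u_r\le \overline{u}_r$ in $\overline{\Omega_r}$, whence the uniform $C^0$-bound
\begin{equation*}
\big|u_r(x)-w(x)\big|\le C,\qquad x\in\overline{\Omega_r},
\end{equation*}
with $C$ independent of $r$. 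This is the zeroth-order estimate in \eqref{decay10720}. For the derivative bounds I invoke the pointwise $C^1$ and $C^2$ estimates of the second and third authors referenced in the abstract. Because $u_r$ is pinched between barriers comparable to $w$, and $w$ itself satisfies $|Dw|\sim |x|^{-(n-k)/k}$ and $|D^2 w|\sim |x|^{-n/k}$, these pointwise estimates deliver $|Du_r(x)|\le C|x|^{-(n-k)/k}$ and $|D^2 u_r(x)|\le C|x|^{-n/k}$ with $C$ independent of $r$. Standard global $C^2$ estimates near $\partial\Omega$ \cite{CNSIII} close the estimates on the outer boundary.

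\textbf{Passage to the limit, uniqueness, and main obstacle.} The $r$-uniform estimates above allow extraction of a subsequence $u_{r_j}\to u$ in $C^{1,\alpha}_{\mathrm{loc}}(\overline\Omega\setminus\{0\})$, where $u\in C^{1,1}(\overline{\Omega}\setminus\{0\})$ solves \eqref{case1Equa1.1} and satisfies \eqref{decay10720}. Uniqueness follows from the comparison principle for $k$-convex functions applied on $\Omega\setminus B_\rho$ for each $\rho>0$: the $O(1)$ control at the singularity forces any two solutions to differ by $o(1)$ on $\partial B_\rho$ as $\rho\to 0$, so the boundary comparison propagates inward. The principal technical obstacle is establishing the sharp weighted pointwise $C^2$-bound uniformly in $r$: the classical Chou-Wang Pogorelov estimate \cite{cw2001cpam} yields $|x|^2|D^2 u_r|\le C$ but with a constant depending on $\|\underline{u}_r\|_{C^2}$, which blows up at the origin. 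The contribution of the second/third authors' pointwise scheme is precisely that the constant depends only on local admissibility data of the barrier rather than on its global $C^2$-norm, preserving the sharp weight $|x|^{-n/k}$. A secondary subtlety is ensuring that the $O(1)$-matching at the origin tightens in the limit; this is handled by choosing the barrier constants $A,B$ independent of $r$ so that the gap $\overline{u}_r-\underline{u}_r$ is globally bounded.
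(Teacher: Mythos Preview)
Your overall strategy---approximate in the punctured ring $\Omega_r$, obtain $r$-uniform pointwise estimates, then let $r\to 0$---matches the paper's. But there is a genuine gap: you solve the \emph{homogeneous} equation $S_k(D^2u_r)=0$ in $\Omega_r$ directly, whereas the paper inserts a second layer of approximation and solves $S_k(D^2u^{\varepsilon,r})=\varepsilon>0$. This is not a stylistic choice; it is forced by the tools you invoke. The existence results you cite (\cite{CNSIII}, \cite{guan2014duke}) both require a strictly positive right-hand side, and in Guan's subsolution method the subsolution must \emph{coincide} with the boundary data on $\partial\Omega_r$, which is why the paper takes the boundary trace to be $\underline u|_{\partial\Omega_r}$ rather than the pure radial value $-r^{-(n-2k)/k}$. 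More critically, the pointwise gradient estimate from \cite{MaZhang2022arxiv} that you rely on reads
\[
\max_{U}P\le \max\Big\{C\max_{U}\big((-u)^{-\frac{2k}{n-2k}}|D\log f|^2\big),\ \max_{\partial U}P\Big\},
\]
and the interior term $|D\log f|^2$ is undefined when $f\equiv 0$. The paper exploits exactly the fact that for $f\equiv\varepsilon$ this term vanishes, reducing the estimate to a pure boundary bound; without $\varepsilon>0$ you have no interior gradient estimate at all. The same issue recurs at the $C^2$ level: the second-order estimate in \cite{MaZhang2022arxiv} requires $u\in C^4$, which you get from Evans--Krylov only for uniformly elliptic equations, i.e.\ again $\varepsilon>0$.

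A secondary point: your barrier $\underline u_r=w+A\phi+B$ with $\phi$ a fixed strictly $k$-convex defining function is cleaner in spirit than the paper's glued construction via P.~F.~Guan's smoothing lemma (Lemma~\ref{Guan2002lemma}), and by superadditivity of $S_k^{1/k}$ it does give a strict subsolution. However, you would still need to adjust the boundary data on $\partial B_r$ to equal $\underline u_r$ there (not just dominate it), so that Guan's existence theorem applies; the resulting boundary value is then $w+A\phi+B$ on $\partial B_r$, not $-r^{-(n-2k)/k}$, and you must check that this perturbation does not spoil the scaling argument used for the boundary $C^1$ and $C^2$ estimates on $\partial B_r$. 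The paper's choice $w=-|x|^{2-n/k}+\text{const}+a_0|x|^2/(2R_0^2)$ is engineered precisely so that after rescaling $y=x/r$ the inner boundary data stays uniformly $C^2$-bounded.
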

\begin{remark}
Assume $1\le k\le \frac{n}{2}$. Labutin \cite{labutin2002duke} proved if $u$ is $k$-convex solving $S_k(D^2 u)=0$ in $B_R\setminus \{0\}$, $u<0$   and $0$ is the singular point of $u$, there exists a positive constant $C_0$ such that $u(x)=C_0G_k(x)+O(1)$ as $x\rightarrow 0$. This is the reason why we  prescribe  the above asymptotic behavior in \eqref{case1Equa1.1}.

\end{remark}

\subsubsection{\emph{\textbf{Case3:}} $k=\frac{n}{2}$}
Since the Green function in this case is $\log|x|$, we  consider the $k$-Hessian equation when $k=\frac{n}{2}$ as follows
\begin{equation}\label{case3Equa1.3}
\left\{\begin{aligned} S_{\frac{n}{2}}(D^2 u)=0 \ \ \qquad&\text{in}\ \ \Omega\setminus\{0\},\\
u=0\ \ \qquad &\text{on}\ \partial\Omega,\\
u(x)=\log|x|+O(1) \ &\text{as}\ {|x|\rightarrow 0}.
\end{aligned}
\right.
\end{equation}
If we prescribe  $u=C_0\log|x|+O(1)$ as $x\rightarrow 0$ for some positive constant $C_0$, then  $\tilde u=C_0^{-1} u$  solves \eqref{case3Equa1.3}. 
\begin{theorem}\label{main07203}
Assume $k=\frac{n}{2}$. Let $\Omega$ be a smoothly and strictly $(k-1)$-convex domain in $\mathbb{R}^n$. There exists a unique  $k$-convex solution $u\in C^{1,1}(\overline{\Omega}\setminus\{0\})$ of the equation \eqref{case3Equa1.3}. Moreover, there exists uniform constant $C$ such that for any $x\in \overline\Omega\setminus\{0\}$ the following holds
\begin{align}\label{decay30720}
\left\{
\begin{aligned}
 |u(x)-\log|x||\le& C,\\
  |Du|(x)\le& C|x|^{-1},\\
|D^2u|(x)\le& C|x|^{-2}.
\end{aligned}
\right.
\end{align}
\end{theorem}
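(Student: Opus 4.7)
The strategy is to approximate by excising the singular point: for small $r>0$, consider
\begin{align*}
S_k(D^2 u^r) = 0 \text{ in } \Omega\setminus\overline{B_r}, \quad u^r = 0 \text{ on } \partial\Omega, \quad u^r = \log r \text{ on } \partial B_r,
\end{align*}
(further regularized by $S_k(D^2 u^{r,\epsilon}) = \epsilon$, $\epsilon\to 0$, to handle the degeneracy), and then pass to the limit $r\to 0$. Existence of a $k$-convex $u^r$ reduces, via the Caffarelli--Nirenberg--Spruck/Guan theory, to producing a strict subsolution on $\overline{\Omega\setminus B_r}$.

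The key structural fact at $k=n/2$ is that $v(x)=\log|x|$ itself solves $S_k(D^2 v)=0$ on $\mathbb R^n\setminus\{0\}$ and is $k$-convex: the Hessian of $\log|x|$ has eigenvalues $-|x|^{-2}$ radially and $|x|^{-2}$ with multiplicity $n-1$ tangentially, so
\begin{align*}
S_j(D^2 v) = \left[\binom{n-1}{j}-\binom{n-1}{j-1}\right]|x|^{-2j}, \qquad 1\le j\le k,
\end{align*}
which is nonnegative for $j\le n/2$ and vanishes at $j=k=n/2$ by the Pascal symmetry $\binom{n-1}{n/2}=\binom{n-1}{n/2-1}$. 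A subsolution for the approximating problem is then obtained by perturbation: set $\underline u_r(x)=\log|x|+A\psi(x)+B$, where $\psi<0$ is a smooth strictly $k$-convex function on $\Omega$ vanishing on $\partial\Omega$ (supplied by the strict $(k-1)$-convexity of $\partial\Omega$) and the constants $A,B$ are chosen so that $\underline u_r\le 0$ on $\partial\Omega$ and $\underline u_r\le \log r$ on $\partial B_r$ uniformly in $r$. A matching supersolution of the form $\log|x|+C|x|^2+\mathrm{const}$ is constructed analogously.

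Comparing $u^r$ with $\underline u_r$ and the supersolution via the Trudinger--Wang comparison principle yields the uniform asymptotic bound $|u^r(x)-\log|x||\le C$. The pointwise gradient and second-order estimates of the second and third authors then produce $|Du^r|(x)\le C|x|^{-1}$ and $|D^2 u^r|(x)\le C|x|^{-2}$ with $C$ independent of $r$. Combined with standard boundary regularity at $\partial\Omega$, an Arzel\`a--Ascoli diagonal extraction gives $u^{r_j}\to u$ in $C^{1,\alpha}_{\mathrm{loc}}(\overline\Omega\setminus\{0\})$; the limit inherits \eqref{decay30720}, solves $S_k(D^2 u)=0$ in the viscosity/a.e.~sense, and exhibits the prescribed asymptotic $\log|x|+O(1)$ at the origin. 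Uniqueness follows from the comparison principle applied on each $\overline\Omega\setminus \overline{B_r}$, using that two candidate solutions differ by $O(1)$ near $0$.

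The main obstacle is that the cited pointwise estimates must deliver the \emph{sharp} rates $|x|^{-1}$ and $|x|^{-2}$ with constants independent of $r$; since $\log|x|$ already saturates these rates at the origin, there is no slack, and the dependence in the Ma--Zhang estimates must be traced carefully to confirm it relies only on the geometry of $\Omega$ and the distance to $0$, not on the inner boundary data $\log r$ or on $r$ itself. A secondary technical point is arranging the subsolution's strict $k$-convexity to survive the perturbation simultaneously near $\partial\Omega$ and near $\partial B_r$, which is where the degenerate structure of $\log|x|$ at $k=n/2$ sits on the edge of the cone $\Gamma_k$.
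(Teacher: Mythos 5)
Your proposal follows essentially the same route as the paper: approximate by $S_k(D^2 u^{\varepsilon,r})=\varepsilon$ on $\Omega\setminus\overline{B_r}$, build a $k$-convex subsolution from $\log|x|$ plus a strictly $k$-convex correction, invoke the Ma--Zhang gradient and second-order estimates for the $r$- and $\varepsilon$-uniform bounds, extract a limit, and prove uniqueness by comparison. The only real difference is that you posit a single global strictly $k$-convex defining function $\psi$ and take $\log|x|+A\psi+B$, whereas the paper constructs the subsolution by gluing $K_0\Phi^0$ near $\partial\Omega$ with $w=\log\frac{|x|}{R_0}+a_0\frac{|x|^2}{2R_0^2}$ via P.~F.~Guan's lemma; note that producing your global $\psi$ (smooth, strictly $k$-convex on all of $\overline\Omega$, vanishing on $\partial\Omega$) is itself nontrivial and would typically be done with the same gluing lemma, so this is a presentational rather than a substantive divergence.
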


To solve the above problems, for example when $k>\frac{n}{2}$ we will prove there exists a smooth $k$-convex function $u^{\varepsilon}$ solving
\begin{align*}
	\left\{
	\begin{aligned}
		 S_{k}(u^{\varepsilon})=\varepsilon &\quad\text{in}\quad \Omega\setminus\{0\},\\
		 u^{\varepsilon}=1 &\quad\text{on} \quad \partial \Omega, \\
		\lim\limits_{|x|\rightarrow 0}u^{\varepsilon}(x)\rightarrow0.&
		\end{aligned}
	\right.
	\end{align*}
Note that the right hand side of the above approximating equation  is $\varepsilon$ which is different from  the exterior Dirichlet problem case.
To solve the above approximating  equation, we  consider the approximating $k$-Hessian equation in $\Omega_r:=\Omega\setminus\overline B_r$ and we will prove the uniform  $C^{1,1}$-estimates.  We firstly construct a subsolution of the approximating $k$-Hessian equation in $\Omega_r$.  This follows from a key lemma due to P. F. Guan \cite{gpf2002am} by the $(k-1)$-convexity of the domain. Note that the second and third author have proved the global gradient and second order estimate in \cite{MaZhang2022arxiv}. Thus we only need to prove the boundary estimates.

%
\subsection{Applications to the starshaped $(k-1)$ convex domain}
As an application of our $C^{2}$ estimates for the approximating equation, we can prove an almost monotonicity formula along the level set of $u^{\varepsilon}$ when $\Omega$ is additionally starshaped. Consequently, we get some weighted geometric inequalities of $\p\Omega$ when $\frac{n}{2}\le k< n$.
\begin{theorem}\label{geometric0725}
	Let $\Omega$ be a bounded smooth starshaped  domain with respect to $0$ in $\mathbb{R}^n$ and strictly $(k-1)$-convex.
	\begin{enumerate}[{(i)}]
		\item
	Assume $\frac{n}{2}<k< n$. Assume $b\ge\frac{k(n-k-1)}{n-k}$. Let $u$ be the unique $C^{1,1}$ solution in Theorem \ref{main07201}. We have
		\end{enumerate}
	\begin{align}\label{0727geometric}
	\int_{\p\Omega}{|Du|^{b+1}H_{k-1}}	\ge \frac{2k-n}{n-k}\int_{\p\Omega}{|Du|^{b}H_{k}},
	\end{align}
	\end{theorem}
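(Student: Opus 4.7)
The plan is to prove \eqref{0727geometric} by a level-set monotonicity argument applied to the smooth approximating solutions $u^\varepsilon$ of $S_k(D^2 u^\varepsilon) = \varepsilon$ constructed earlier in the paper. Because $\Omega$ is starshaped with respect to $0$, the uniform positive lower bound on $|Du^\varepsilon|$ together with the uniform $C^{1,1}$ estimates guarantees that every level set $\Sigma_t = \{u^\varepsilon = t\}$, $t \in (0,1)$, is a smooth closed strictly $(k-1)$-convex starshaped hypersurface. Working in a local orthonormal frame $\{e_1,\dots,e_{n-1},\nu\}$ with $\nu = Du^\varepsilon/|Du^\varepsilon|$, the tangential $(n{-}1)\times(n{-}1)$ block of $D^2 u^\varepsilon$ equals $|Du^\varepsilon|$ times the second fundamental form of $\Sigma_t$, whose principal curvatures I denote by $\kappa_1,\dots,\kappa_{n-1}$. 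Introduce
$$\Phi(t) := \int_{\Sigma_t} |Du^\varepsilon|^{b+1}\,H_{k-1}\,d\sigma \;-\; \frac{2k-n}{n-k}\, t \int_{\Sigma_t} |Du^\varepsilon|^{b}\,H_k\,d\sigma.$$
The computation on the radial fundamental solution in a ball shows $\Phi \equiv 0$ in that model case, so \eqref{0727geometric} is the statement $\Phi(1)\ge 0$ in the limit $\varepsilon\to 0$.

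The key step is the derivative identity. Using the level-set variation formula $\frac{d}{dt}\int_{\Sigma_t} F\,d\sigma = \int_{\Sigma_t}|Du^\varepsilon|^{-1}(\partial_\nu F + F\,\overline H)\,d\sigma$ applied to $F = |Du^\varepsilon|^{b+1} H_{k-1}$ and $F = |Du^\varepsilon|^b H_k$, and the adapted-frame decomposition
$$S_k(D^2 u^\varepsilon) = |Du^\varepsilon|^k H_k + |Du^\varepsilon|^{k-1} u^\varepsilon_{nn} H_{k-1} - |Du^\varepsilon|^{k-2}\sum_{\alpha} (u^\varepsilon_{n\alpha})^2\, \sigma_{k-2}(\kappa|\alpha) = \varepsilon,$$
one can eliminate $u^\varepsilon_{nn}$ in favor of intrinsic data of $\Sigma_t$ plus an $O(\varepsilon)$ remainder. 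I expect this to yield an expression of the form
$$\Phi'(t) \;=\; \int_{\Sigma_t} \frac{\mathcal N(\kappa, u^\varepsilon_{n\alpha}, |Du^\varepsilon|)}{|Du^\varepsilon|}\,d\sigma \;+\; \varepsilon\,\mathcal E(t),$$
where $\mathcal N\ge 0$ pointwise by the Newton--Maclaurin inequality $H_{k-1}H_{k+1}\le H_k^2$ applied to the tangential curvatures combined with a Cauchy--Schwarz-type control of the mixed term $\sum_\alpha (u^\varepsilon_{n\alpha})^2 \sigma_{k-2}(\kappa|\alpha)$, and $\mathcal E(t)$ is uniformly bounded on $(0,1)$. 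The hypothesis $b \ge \frac{k(n-k-1)}{n-k}$ is forced precisely at the recombination step with the coefficient $\frac{2k-n}{n-k}$; it is the threshold making the resulting quadratic form non-negative.

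To close the argument, I evaluate the endpoints of $\Phi$. At $t = 1$, $\Sigma_1 = \partial\Omega$ and $\Phi(1)$ is the signed difference of the two sides of \eqref{0727geometric}. As $t\to 0^+$, using the decay in \eqref{decay20720}, namely $u^\varepsilon \sim |x|^{(2k-n)/k}$, $|Du^\varepsilon|\sim|x|^{(k-n)/k}$, and $|D^2 u^\varepsilon|\le C|x|^{-n/k}$, together with the fact that $\Sigma_t$ becomes asymptotically a geodesic sphere, a direct scaling check shows that each of the two integrals in $\Phi(t)$ is of order $t^{(n-k)(k-b-1)/(2k-n)}$ with matching leading coefficients, so that $\lim_{t\to 0^+}\Phi(t) = 0$ under the hypothesis on $b$. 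Integrating $\Phi'(t) \ge -C\varepsilon$ over $(0,1)$ then yields $\Phi(1) \ge -C\varepsilon$, and passing $\varepsilon\to 0$ (with convergence $u^\varepsilon \to u$ in $C^1$ up to $\partial\Omega$ guaranteed by the uniform estimates) gives \eqref{0727geometric}.

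The main technical obstacle is the algebraic identity for $\Phi'(t)$. After expanding the variation of $|Du^\varepsilon|^{b+1}H_{k-1}$ and $|Du^\varepsilon|^b H_k$, substituting $S_k = \varepsilon$ to eliminate $u^\varepsilon_{nn}$, and using the combinatorial identities for $\sigma_j(\kappa)$ in $n-1$ variables, one must verify that all cross terms recombine into a manifestly non-negative expression at the precise coefficient $\frac{2k-n}{n-k}$ and under the stated range of $b$; this algebraic reorganization, rather than the endpoint analysis or the limiting procedure, is where the new content of the theorem sits.
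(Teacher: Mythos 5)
Your overall strategy — level-set monotonicity for $u^\varepsilon$, the adapted-frame decomposition of $S_k(D^2u^\varepsilon)$ to eliminate $u^\varepsilon_{nn}$, Newton--Maclaurin, and a scaling analysis of the endpoint at $t\to 0^+$ — is the same one the paper uses. But the specific quantity you chose is not the right one to monotonize, and this produces two concrete gaps.

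First, $\Phi(t)$ is off by a power of $t$. Writing $\alpha := a\frac{n-k}{2k-n}$ with $a=b-k+1$, the paper's quantity $I_{a,b,k}(t)=\int_{S_t}g^a|Du^\varepsilon|^{b-k}S_k^{ij}u^\varepsilon_iu^\varepsilon_j$ (with $g(u)=u^{(n-k)/(2k-n)}$) satisfies, by \eqref{0726I} and the identity $S_m^{ij}u_iu_j = H_{m-1}|Du|^{m+1}$,
\begin{equation*}
I'_{a,b,k}(t) = a\tfrac{n-k}{2k-n}\,t^{\alpha-1}\Bigl(\int_{S_t}|Du^\varepsilon|^{b+1}H_{k-1}-\tfrac{2k-n}{n-k}t\int_{S_t}|Du^\varepsilon|^{b}H_{k}\Bigr)+O(\varepsilon),
\end{equation*}
so that $\Phi(t) = \tfrac{2k-n}{a(n-k)}t^{1-\alpha}I'(t)+O(\varepsilon)$. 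What the paper actually proves almost-monotone is $g^{a_0}(t)I'(t) = t^{2}I'(t)$ with $a_0=2\tfrac{2k-n}{n-k}$; in your notation this is $t^{\alpha+1}\Phi(t)$, not $\Phi(t)$. The power $a_0$ is forced: if you carry out the computation you sketch, the residual quadratic form (the paper's $\mathcal L$ in \eqref{0723}) in $H_k/H_{k-1}$ has non-positive discriminant if and only if $(a_0-2\tfrac{2k-n}{n-k})^2\le 0$, i.e.\ only for that exact $a_0$. With your unweighted $\Phi$, the sign-definiteness at "the recombination step" would simply fail, so the almost-monotonicity $\Phi'(t)\ge-C\varepsilon$ that your argument rests on is false in general.

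Second, the endpoint. You claim $\Phi(t)\to 0$ as $t\to 0^+$ by "matching leading coefficients," but the available estimates only give $C^{-1}|x|^{(2k-n)/k}\le u^\varepsilon\le C|x|^{(2k-n)/k}$ with $C>1$; they do not identify an exact leading coefficient for $u^\varepsilon$ near the puncture, so the cancellation you invoke is unproven. Each of $\int_{S_t}|Du^\varepsilon|^{b+1}H_{k-1}$ and $t\int_{S_t}|Du^\varepsilon|^{b}H_k$ is only bounded by $Ct^{-\alpha}$, and $\alpha>0$ for $a>0$, so $\Phi(t)$ could a priori blow up. The corrected weight also fixes this: $t^{\alpha+1}\Phi(t)=O(t)\to 0$ using only the two-sided $C^{1,1}$ bounds, with no cancellation required; this is exactly how the paper handles it via $t_0^2|I'_{a,b,k}(t_0)|\le Ct_0$ before sending $t_0\to 0$.

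Finally, you correctly flag that the algebraic identity for the derivative is the crux and leave it unverified. In the paper this step is Proposition~\ref{0725mono1}: the two cancellations you would need are precisely the identity $S_m^{ij}u_iu_j=H_{m-1}|Du|^{m+1}$ together with the decomposition producing the term $\mathcal{M}\le 0$ in \eqref{m}, and the perfect-square structure of $\mathcal L$ under $a_0 = 2\tfrac{2k-n}{n-k}$ and $b\ge c_{n,k}$. Without that specific normalization, the "manifestly non-negative expression" you anticipate does not materialize.
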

where $H_m$ is the $m$-Hessian operator of the principal curvature $\kappa=(\kappa_1,\cdots,\kappa_{n-1})$ of $\p\Omega$.
	\begin{enumerate}[{(ii)}]
		\item
Assume $k=\frac{n}{2}$ and $b\ge\frac{n}{2}-1$. Let $u$ be the unique $C^{1,1}$ solution in Theorem \ref{main07203}. We have
\end{enumerate}
\begin{align}
	\int_{\p\Omega}|Du|^{b+1}H_{k-1}
	\ge \int_{\p\Omega}|Du|^{b}H_{k}.
\end{align}
\begin{remark}
	If we assume $\Omega$ is starshaped with respect to $x_0\in \Omega$, the above inequality still holds for  $u$ which solves the homogeneous $k$-Hessian equation in $\Omega\setminus\{x_0\}$.	\end{remark}

 \textbf{Organization of this paper.} In section 2,  we firstly construct a subsolution for the approximating equation  by a lemma due to P. F. Guan \cite{gpf2002am}. Based on the new gradient and second order estimates in \cite{MaZhang2022arxiv}, we show uniform $C^{1,1}$ estimate of the approximating solution. The positive lower bound of the gradient of the approximating solution is proved if we also assume $\Omega$ is starshaped. 
Theorem \ref{main07201}, Theorem \ref{main07202} and Theorem \ref{main07203} will be proved in Section 4. In section 5, we prove an almost monotonicity formula along the level set of the approximating solution and then we show Theorem \ref{geometric0725}.

\section{Solving the approximating equation in $\Omega_r:=\Omega\setminus B_r$.}
We need the following  lemma by P. F. Guan\cite{gpf2002am} to construct the existence of the subsolution of the $k$-Hessian equation in $\Omega\setminus \overline{B_r}$.
\begin{lemma}\label{Guan2002lemma}
	Suppose that $U$ is a bounded smooth domain in $\mathbb{R}^n$. For $h, g\in C^m(U)$, $m\ge 2$, for all $\delta>0$, there is an $H\in C^m(U)$ such that
	
	\begin{enumerate}[(1)]
		\item
		$H\ge \max\{h, g\}$ \ \text{and}
		
		\begin{align*}
			H(x)=\left\{ {\begin{array}{*{20}c}
					{h(x), \quad   \text{if } \ h(x)-g(x)>\delta  }, \\
					g(x) , \ \quad \text{if } \ g(x)-h(x)>\delta;\\
			\end{array}} \right.
		\end{align*}
		\item
		{There exists}  $|t(x)|\le 1 $ {such that}
		
		\begin{align*}
			\left\{H_{ij}(x)\right\}\ge
			\left\{\frac{1+t(x)}{2}g_{ij}+\frac{1-t(x)}{2}h_{ij}\right\},\ \text{for all} \ x\in\left\{|g-h|<\delta\right\}.
		\end{align*}
	\end{enumerate}
\end{lemma}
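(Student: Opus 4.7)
The plan is to construct $H$ as an explicit smoothing of $\max\{h,g\}$. Writing $\max(a,b)=\tfrac{a+b}{2}+\tfrac{|a-b|}{2}$, the natural ansatz is
$$H(x)=\frac{h(x)+g(x)}{2}+\phi_\delta\bigl(h(x)-g(x)\bigr),$$
where $\phi_\delta\in C^\infty(\mathbb{R})$ is a smooth, even, convex function playing the role of a mollified $|\cdot|/2$.

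First I would fix $\phi_\delta$ satisfying: (a) $\phi_\delta(r)=|r|/2$ for $|r|\ge\delta$; (b) $\phi_\delta(r)\ge|r|/2$ everywhere; (c) $\phi_\delta''\ge 0$; and (d) $|\phi_\delta'|\le 1/2$. Such a function is elementary to produce by standard mollification of $|r|/2$ combined with a smooth cutoff so that it agrees exactly with $|r|/2$ outside $[-\delta,\delta]$; note that (c) together with the exterior values $\pm 1/2$ of $\phi_\delta'$ automatically forces (d). With this choice, property (1) is immediate: on $\{h-g>\delta\}$ we have $\phi_\delta(h-g)=(h-g)/2$ so $H=h$, and symmetrically on $\{g-h>\delta\}$ we get $H=g$, while globally $H\ge\max\{h,g\}$ by (b). Smoothness of $H$ up to order $m$ inherits from $h,g$ and $\phi_\delta$.

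For (2), a direct differentiation gives
$$H_{ij}=\frac{h_{ij}+g_{ij}}{2}+\phi_\delta'(h-g)(h_{ij}-g_{ij})+\phi_\delta''(h-g)(h_i-g_i)(h_j-g_j).$$
The last term is a positive semidefinite rank-one matrix by the convexity of $\phi_\delta$, so it can be dropped from below, yielding
$$\{H_{ij}\}\ge\Bigl(\tfrac{1}{2}+\phi_\delta'(h-g)\Bigr)\{h_{ij}\}+\Bigl(\tfrac{1}{2}-\phi_\delta'(h-g)\Bigr)\{g_{ij}\}.$$
Setting $t(x):=-2\phi_\delta'(h(x)-g(x))$ recovers the form stated in (2), and $|t(x)|\le 1$ follows from (d).

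The essential point -- and really the only nontrivial one -- is that the convexity of $\phi_\delta$ supplies precisely the nonnegative rank-one correction needed to absorb the unwanted $(h_i-g_i)(h_j-g_j)$ contribution, while the sharp bound $|\phi_\delta'|\le 1/2$ gives exactly the range $|t|\le 1$. Beyond producing a $\phi_\delta$ with these four properties, there is no real obstacle; the lemma becomes a short calculus computation once this ansatz is in place.
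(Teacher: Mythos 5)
Your construction is correct. The paper itself does not prove this lemma; it cites it as a result of P.~F.~Guan \cite{gpf2002am}, so there is no in-paper argument to compare against. Your argument is the standard regularized-maximum construction (take $H=\tfrac{h+g}{2}+\phi_\delta(h-g)$ with $\phi_\delta$ a smooth even convex mollification of $|r|/2$ agreeing with $|r|/2$ for $|r|\ge\delta$), and all the steps check out: convexity of $\phi_\delta$ makes the rank-one term $\phi_\delta''(h-g)(h_i-g_i)(h_j-g_j)$ nonnegative definite, discarding it gives the stated convex-combination lower bound with $t=-2\phi_\delta'(h-g)$, and monotonicity of $\phi_\delta'$ with boundary values $\pm\tfrac12$ gives $|t|\le1$; the fact that $\rho$ is even and supported in $[-\delta,\delta]$ forces $\phi_\delta=|r|/2$ for $|r|\ge\delta$ and $\phi_\delta\ge|r|/2$ by the symmetric-center argument, yielding (1). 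In fact your $H$ satisfies (2) on all of $U$, not just on $\{|g-h|<\delta\}$, which is slightly stronger than stated. One cosmetic point: the mention of a ``smooth cutoff'' is superfluous, since a mollifier supported in $[-\delta,\delta]$ already makes the convolution agree with $|r|/2$ outside $[-\delta,\delta]$ identically.
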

{By the convacity of $S^\frac1k$, we} can prove that $H$ is  $k$-convex if $f$ and $g$ are both $k$-convex.

 Recall that we always assume $B_{r_0}\subset\subset\Omega\subset\subset B_{(1-\tau_0)R_0} $ for some  $\tau_0\in (0,\frac{1}{2})$.
 Firstly we state a useful fact for the strictly $(k-1)$-convex domain{, which can be found in \cite[Section 3]{CNSIII}.}
 \begin{lemma}\label{lem2.2}
 	Let $\Omega$ be a smoothly and strictly $(k-1)$-convex bounded domain. 
 	There exists $\mu_0>0$ small such that $\Omega_{2\mu_0}:=\{x\in \Omega: d(x)<2\mu_0\}$ is close to $\p \Omega$ ,$B_{r_0}\subset\subset\{x\in \Omega:d(x)> 2\mu_0\}$ and $d(x)$ is smooth in $\overline\Omega_{2\mu_0}$. Moreover,  $\Phi^0:=t_0^{-1}(e^{-t_0d(x)}-1)$ is smooth and strictly $k$-convex and $S_k(D^2(\Phi^0))\ge \epsilon_0$ in $\overline\Omega_{2\mu_0}$ for some uniform positive constants $t_0$ and $\epsilon_0$. 
 \end{lemma}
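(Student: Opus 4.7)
The plan is to first handle the geometric and regularity statements (existence of $\mu_0$ with smooth $d$ and $B_{r_0}\subset\subset\{d>2\mu_0\}$), and then reduce the $k$-convexity claim to an elementary linear-algebra fact about the principal curvatures of $\p\Omega$.

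\textbf{Step 1 (Choice of $\mu_0$).} Since $\Omega$ is smoothly bounded, standard tubular-neighborhood theory gives some $\mu_1>0$ such that $d(x)=\operatorname{dist}(x,\p\Omega)$ is smooth on $\{0\le d(x)\le 2\mu_1\}$, with $|\na d|\equiv 1$. Because $B_{r_0}\subset\subset \Omega$, the distance from $\ol{B_{r_0}}$ to $\p\Omega$ is strictly positive; shrinking $\mu_1$ if necessary, one selects $\mu_0\in(0,\mu_1/2)$ small enough that $2\mu_0<\operatorname{dist}(\ol{B_{r_0}},\p\Omega)$. This gives both the required regularity of $d$ on $\ol\Omega_{2\mu_0}$ and the inclusion $B_{r_0}\subset\subset\{d>2\mu_0\}$.

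\textbf{Step 2 (Computing $D^2\Phi^0$).} A direct differentiation of $\Phi^0=t_0^{-1}(e^{-t_0 d}-1)$ gives
\begin{equation*}
D^2\Phi^0 \;=\; e^{-t_0 d}\bigl(t_0\,\na d\otimes\na d - D^2 d\bigr).
\end{equation*}
Since $|\na d|=1$, differentiating yields $D^2 d\cdot \na d=0$, so $\na d$ is an eigenvector of $D^2\Phi^0$ with eigenvalue $t_0 e^{-t_0 d}$. On the orthogonal complement, the eigenvalues of $D^2\Phi^0$ are $-e^{-t_0 d}\mu_1,\dots,-e^{-t_0 d}\mu_{n-1}$, where $\mu_i$ are the eigenvalues of $D^2 d$ restricted to $(\na d)^\perp$. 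At a boundary point $y\in\p\Omega$, a standard calculation identifies $-\mu_i$ with the principal curvatures $\kappa_i(y)$ of $\p\Omega$ (with respect to the inward normal $\na d$).

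\textbf{Step 3 (Using strict $(k-1)$-convexity).} By hypothesis $\kappa(y)=(\kappa_1,\dots,\kappa_{n-1})\in\Gamma_{k-1}$ strictly, so the elementary symmetric functions $\sigma_j(\kappa(y))>0$ for $1\le j\le k-1$, uniformly in $y\in\p\Omega$ by compactness. Using the recursion $\sigma_k(\kappa,t_0)=\sigma_k(\kappa)+t_0\,\sigma_{k-1}(\kappa)$, and more generally $\sigma_j(\kappa,t_0)=\sigma_j(\kappa)+t_0\,\sigma_{j-1}(\kappa)$, one chooses $t_0$ large so that
\begin{equation*}
t_0\,\sigma_{j-1}(\kappa(y))+\sigma_j(\kappa(y)) \;\ge\; c_0 \qquad \text{for all }y\in\p\Omega,\ 1\le j\le k,
\end{equation*}
for some uniform $c_0>0$. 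This makes the eigenvalue vector of $D^2\Phi^0$ at $y$ lie in the positive cone $\Gamma_k$ with uniform distance from its boundary, and gives $S_k(D^2\Phi^0)(y)\ge 2\epsilon_0$ on $\p\Omega$ for some $\epsilon_0>0$.

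\textbf{Step 4 (Propagation to a neighborhood).} Since $D^2\Phi^0$ depends continuously on $x$ through the smooth function $d$ and its derivatives, and $\p\Omega$ is compact, the uniform positivity $S_k(D^2\Phi^0)\ge \epsilon_0$ and strict $k$-convexity persist on a one-sided tube $\{0\le d(x)\le 2\mu_0\}$ provided $\mu_0$ is chosen sufficiently small (shrinking again if needed). This gives the full conclusion. The only non-routine point is Step 3 — verifying that the single normal eigenvalue $t_0$ plus the strictly $(k-1)$-convex tangential curvatures produce a $\Gamma_k$-vector — but it reduces to the one-line recursion for $\sigma_k$, so the overall proof is short.
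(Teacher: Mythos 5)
Your argument is correct and is, in substance, the standard construction from Caffarelli–Nirenberg–Spruck \cite{CNSIII} that the paper cites without proof: compute $D^2\Phi^0=e^{-t_0d}(t_0\,\nabla d\otimes\nabla d-D^2d)$, note the normal eigenvalue $t_0$ and tangential eigenvalues equal to the principal curvatures at $\partial\Omega$, and use $\sigma_j(t_0,\kappa)=\sigma_j(\kappa)+t_0\sigma_{j-1}(\kappa)$ together with strict $(k-1)$-convexity and compactness to pick $t_0$ large and then $\mu_0$ small. One small caveat in wording: when you say the principal curvatures are taken ``with respect to the inward normal $\nabla d$,'' this flips the usual sign convention; what you actually need (and what your formula $-\mu_i=\kappa_i$ delivers) is that $\kappa$ is the curvature vector for which $(k-1)$-convexity means $\sigma_j(\kappa)>0$, i.e.\ the outward-normal convention, so the phrasing should be adjusted but the computation and conclusion are right.
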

 
 \subsection{Case 1: $k>\frac{n}{2}$}
 Since the Green function in this case is $|x|^{\frac{2k-n}{k}}$, we want to solve the $k$-Hessian equation as follows
 \begin{equation}\label{case2Equa}
 	\left\{\begin{aligned}S_{k}(D^2 u)=&0 \quad\text{in}\ \ \  \mathring{\Omega},\\
 		u=&1\ \ \ \ \text{on}\ \partial\Omega,\\
 		\lim\limits_{x\rightarrow 0}u(x)=&0.
 	\end{aligned}
 	\right.
 \end{equation}

 \subsubsection{The approximating equation }
 We will use the solution of a sequence of nondegenetare equations  in $\Omega_r$ to approximate the solution of the homogeneous $k$-Hessian equation. The existence of the approximating solution can be obtained if we can construct a smooth subsolution. We use the $(k-1)$-convexity of $\p\Omega$ and the  Lemma \ref{Guan2002lemma} by P. F. Guan \cite{gpf2002am} to prove the existence of the subsolution. 
 
 Denote $w:=\frac{1}{2}
 {\Big(\frac{|x|}{R_0}\Big)}^{2-\frac{n}{k}}+\frac{|x|^2}{2R_0^2}$. 
 {By the concavity of $S_k^\frac1k$,}
\begin{equation*}
{S_k^\frac1k(D^2w)=S_k^\frac1k\Big(\frac12D^2\big(\frac{|x|}{R_0}\big)^{2-\frac{n}2}+\frac{1}{2R_0^2}D^2|x|^2\Big)\geq S_k^\frac1k(\frac{1}{R_0^2}I).}
\end{equation*}
Then we have
 \begin{align*}
 	S_{k}(D^2w)
 	\ge { C_n^k}R_0^{-2k}.
 \end{align*}
 Then we construct a smoothly and strictly $k$-convex function $\underline {u}$ by lemma \eqref{Guan2002lemma} as follows. 
 \begin{lemma}
 	There exists a  strictly $k$-convex function $\underline {u}\in C^{\infty}{(\overline\Omega_r)}$ satisfying
 	\begin{align}
 		\underline{u}=&
 		\left\{\begin{aligned}\label{0817:sub1}
 			K_0\Phi^0+1
 			\quad&\text{if}\ d(x)\le \frac{\mu_0}{M_0}, \\
 			w\quad &\text{if}\ d(x)>\mu_0,
 		\end{aligned}
 		\right.\\
 		\underline{u}\ge& \max\left\{ w, K_0\Phi^0+1\right\}\ \ \text{if}\ \frac{\mu_0}{M_0}\le d(x)\le \mu_0,\notag\\
 		S_k(D^2 \underline{u})\ge&\epsilon_1:= \min\{{C_n^k}R_0^{-2k}, {K_0^k}\epsilon_0\} \quad \text{in}\ \Omega,\notag
 	\end{align}
 	where $K_0=\frac{2t_0}{1-e^{-\mu_0t_0}}$ and $M_0$ is determined by $K_0(1-e^{-t_0\frac{\mu_0}{M_0}})=t_0\delta$ with $\delta:=\frac{1}{2}(1-\tau_0)^{2-\frac{n}{k}}$.
 \end{lemma}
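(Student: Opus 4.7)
The plan is to invoke P. F. Guan's gluing Lemma \ref{Guan2002lemma} with the two $k$-convex building blocks
\[
h(x) := K_0 \Phi^0(x) + 1 \text{ on } \ol{\Omega_{2\mu_0}}, \qquad g(x) := w(x) \text{ on } \ol{\Omega_r},
\]
and then extend the resulting smooth function $H$ by $w$ on $\ol{\Omega_r}\setminus \Omega_{2\mu_0}$. This extension is consistent because $H$ already coincides with $g=w$ on $\{\mu_0 \le d \le 2\mu_0\}$ once Guan's lemma is applied with the chosen $\delta$.

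First I would verify that both $h$ and $g$ are strictly $k$-convex with explicit lower bounds on $S_k$. For $h$, Lemma \ref{lem2.2} gives $S_k(D^2\Phi^0) \ge \epsilon_0$ on $\ol{\Omega_{2\mu_0}}$, so $S_k(D^2 h) = K_0^k S_k(D^2 \Phi^0) \ge K_0^k\epsilon_0$. For $g = w$, I would decompose $w$ into its fundamental-solution part $\tfrac12 R_0^{-(2-n/k)}|x|^{2-n/k}$ and its quadratic part $|x|^2/(2R_0^2)$. A direct eigenvalue computation shows the first summand lies on $\p\Gamma_k$ (its $S_k$ vanishes while the lower $S_j$'s are positive for $|x|>0$), while the second has Hessian $R_0^{-2}I$ in the interior of $\Gamma_k$. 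The superadditivity of $S_k^{1/k}$ on $\ol{\Gamma_k}$ then gives
\[
S_k^{1/k}(D^2 w) \ge S_k^{1/k}(R_0^{-2} I) = (C_n^k)^{1/k} R_0^{-2},
\]
so $S_k(D^2 w) \ge C_n^k R_0^{-2k}$.

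Next I would check the separation conditions required by Lemma \ref{Guan2002lemma} with $\delta = \tfrac12 (1-\tau_0)^{2-n/k}$. The defining equation $K_0(1-e^{-t_0\mu_0/M_0}) = t_0 \delta$ is calibrated so that $K_0\Phi^0 = -\delta$ at $d=\mu_0/M_0$; hence $h \ge 1-\delta$ on $\{d\le \mu_0/M_0\}$, and the bound $w \le \delta + \tfrac12(1-\tau_0)^2$ on $\ol\Omega$ (coming from $|x| \le (1-\tau_0)R_0$) yields $h-g>\delta$ on this inner strip after choosing $R_0$ large enough that $\tau_0$ is suitably close to $\tfrac12$. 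Symmetrically, $K_0 = 2t_0/(1-e^{-\mu_0 t_0})$ forces $h=-1$ at $d=\mu_0$; since $h$ is decreasing in $d$ and $g = w>0$, this gives $g-h > \delta$ on $\{d\ge\mu_0\}$. Lemma \ref{Guan2002lemma} then produces a smooth $H$ on $\ol{\Omega_{2\mu_0}}$ matching $h$ on $\{d\le\mu_0/M_0\}$ and $g$ on $\{\mu_0\le d\le 2\mu_0\}$, with $H \ge \max\{h,g\}$ in the transition strip.

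Finally, using part (2) of Guan's lemma,
\[
D^2 H \ge \tfrac{1+t(x)}{2} D^2 g + \tfrac{1-t(x)}{2} D^2 h,
\]
which lies in $\Gamma_k$ as a convex combination of two $\Gamma_k$-matrices; the concavity of $S_k^{1/k}$ then yields $S_k^{1/k}(D^2 H) \ge \min\{S_k^{1/k}(D^2 h), S_k^{1/k}(D^2 g)\}$, hence $S_k(D^2 \underline{u}) \ge \min\{K_0^k \epsilon_0,\, C_n^k R_0^{-2k}\} = \epsilon_1$ throughout $\Omega_r$. The main delicacy is verifying the two separations at $d=\mu_0$ and $d=\mu_0/M_0$ with the precise constants $K_0, M_0, \delta$; once this is done, Guan's lemma together with the concavity of $S_k^{1/k}$ completes the construction.
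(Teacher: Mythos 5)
Your approach --- Guan's gluing lemma applied to the blocks $K_0\Phi^0+1$ and $w$, superadditivity of $S_k^{1/k}$ to bound $S_k(D^2w)$, the convex-combination argument for the final $S_k(D^2\underline u)\ge\epsilon_1$, and the extension by $w$ outside $\Omega_{2\mu_0}$ --- is the same as the paper's (the swap of the labels $g,h$ is immaterial by the symmetry of the lemma). However, the inner-strip separation does not close with the stated $\delta=\tfrac12(1-\tau_0)^{2-n/k}$, and your remedy of pushing $\tau_0$ toward $\tfrac12$ does not save it. From $h\ge 1-\delta$ on $\{d\le\mu_0/M_0\}$ and $w\le\delta+\tfrac12(1-\tau_0)^2$ on $\overline\Omega$, the requirement $h-g>\delta$ becomes $1-\tfrac12(1-\tau_0)^2>3\delta=\tfrac32(1-\tau_0)^{2-n/k}$. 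For $k$ near $\tfrac{n}{2}$ the exponent $2-\tfrac{n}{k}$ is near $0$, so $(1-\tau_0)^{2-n/k}$ is near $1$ and the right-hand side is near $\tfrac32$, exceeding the left-hand side (which is at most $1$) regardless of how $\tau_0\in(0,\tfrac12)$ is chosen. More simply, one always has $(1-\tau_0)^{2-n/k}>1-\tau_0>\tfrac12$, so the stated $\delta>\tfrac14$, which is already too large for any such argument.

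The resolution is that the value of $\delta$ in the lemma statement is a typo: the paper's own inner-strip computation ends with the line $g-h\ge\tfrac12\big(1-(1-\tau_0)^{2-n/k}\big)=\delta$, revealing the intended $\delta=\tfrac12\big(1-(1-\tau_0)^{2-n/k}\big)\in(0,\tfrac14)$. With this $\delta$ (for which the defining equation of $M_0$ is still solvable), your bound $h\ge1-\delta$ together with the strict bound $w<(1-\tau_0)^{2-n/k}=1-2\delta$ on $\overline\Omega$ (using $(1-\tau_0)^2<(1-\tau_0)^{2-n/k}$) yields $h-g>(1-\delta)-(1-2\delta)=\delta$, with no extra constraint on $\tau_0$, $R_0$, or $k$. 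The outer separation you gave ($K_0\Phi^0=-2$ at $d=\mu_0$, so $h=-1$ there and $g-h>1>\delta$ on $\{\mu_0\le d\le2\mu_0\}$) and the concluding $S_k$ estimate via Guan's part (2) and concavity of $S_k^{1/k}$ are correct and agree with the paper.
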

 \begin{remark}
 	This lemma tells us that $\underline{u}$ is $K_0\Phi^0+1$ near $\p\Omega$ and $\underline{u}$ is $w$ outside $\Omega_{2\mu_0}$. Moreover, $\underline{u}$ is smooth and  strictly $k$-convex. Although  this lemma is elementary, it is crucial for the proof of $C^{1,1}$ estimates.
 \end{remark}
 \begin{proof}
 	Applying Guan's lemma for $U=\Omega_{2\mu_0}:=\{x\in\Omega: d(x)<2\mu_0\}$, $g=K_0\Phi^0+1$, $h=w$ and $\delta=\frac{1}{2}(1-\tau_0)^{2-\frac{n}{k}}$, we get a strictly and smoothly $k$-convex function $\underline{u}$ in $\Omega_{2\mu_0}$. In the following, we prove \eqref{0817:sub1}.
 	
 	For any $x\in \overline\Omega_{2\mu_0}\setminus\Omega_{\mu_0}:=\{x\in\overline\Omega:\mu_0\le d(x)\le {2\mu_0}\}$, since  $K_0=\frac{2t_0}{1-e^{-t_0\mu_0}}$, we have $$g(x)\le -1.$$
 	Then
 	\begin{align}
 		h-g\ge -g\ge 1>\delta\quad\text{in}\ \overline\Omega_{2\mu_0}\setminus\Omega_{\mu_0}.
 	\end{align}
 	This implies $\underline{u}=w$ in $\overline\Omega_{2\mu_0}\setminus\Omega_{\mu_0}$.
 	
 	For any $x\in\overline{\Omega_{\frac{\mu_0}{M_0}}}:=\{x\in\overline\Omega:d(x)\le \frac{\mu_0}{M}\}$, since $\Omega\subset\subset B_{(1-\tau_0)R_0}$, we have
 	\begin{align}
 		g-h=& t_0^{-1}K_0(e^{-t_0d(x)}-1)+1-\frac{1}{2}
 		{\Big(\frac{|x|}{R_0}\Big)}^{\frac{2k-n}{k}}-\frac{|x|^2}{2R_0^2} \notag\\
 		\ge& t_0^{-1}K_0(e^{-t_0\frac{\mu_0}{M}}-1)+1-(1-\tau_0)^{2-\frac{n}{k}}\notag\\
 		\ge& \frac{1}{2}(1-(1-\tau_0)^{2-\frac n k})=\delta,
 	\end{align}
 	where $M_0$ is defined by $K_0(1-e^{-t_0\frac{\mu_0}{M_0}})=t_0\delta$.
 	This implies $\underline{u}=K_0\Phi^0+1$ in $\Omega_{\frac{\mu_0}{M_0}}$.
 	
 	At last, we define $\underline{u}=w$ in $\Omega_r\setminus\Omega_{2\mu_0}$.  {In $\Omega_\frac{\mu_0}{M_0}$, by Lemma \ref{lem2.2}, $S_k(D^2\underline u)=S_k(K_0\Phi^0)\geq K_0^k\epsilon_0$. In $\Omega_r\backslash \Omega_{2\mu_0}$, $S_k(D^2\underline u)=S_k(D^2w)\geq C_n^kR_0^{-2k}$. In $\Omega_{2\mu_0}\backslash \Omega_{\frac{\mu_0}{M_0}}$, by the concavity of $S_k^\frac1k$,  $S_k^\frac1k(D^2\underline u)\geq \frac{1+t(x)}{2}S_k^\frac1k(D^2w)+\frac{1-t(x)}2S_k^\frac1k(K_0D^2\Phi^0).$ }
 	The proof is complete.
 \end{proof}
 
 Now we consider the following approximating equation 
 \begin{equation}\label{case1EquaAppr}
 	\left\{ \begin{aligned}
 		S_k(D^2 u)=\varepsilon  \quad& \text{in}\    \Omega\setminus\overline {B_r},\\
 		u=1\quad  &\text{on} \  \partial\Omega, \\
 		u=\underline{u}=\frac{1}{2}
 		{\Big(\frac{r}{R_0}\Big)}^{2-\frac{n}{k}}+\frac{r^2}{2R_0^2}\quad &\text{on}\   \partial B_r .  \\
 	\end{aligned} \right.
 \end{equation}
 If $\varepsilon<\epsilon_1$,  $\underline{u}$ is a subsolution by the above lemma. By B. Guan \cite{guan1994cpde} (see also \cite{guan2014duke}), equation \eqref{case1EquaAppr} has a strictly $k$-convex solution $u^{\varepsilon,r}\in C^{\infty}(\overline \Omega_r)$. Our goal is to establish uniform $C^2$ estimates of $u^{\varepsilon,r}$, which are independent of $\varepsilon$ and $r$.
 
 We can check that $\bar u:=\Big(\frac{|x|}{r_0}\Big)^{2-\frac{n}{k}}$ is a supersolution of the above approximating equation. Indeed, $\bar u$ is smooth in $\Omega_r$ and $S_k ({D^2 \bar u})=0$. \\
 On $\p B_r$, we have $$u^{\varepsilon,r}=\frac{1}{2}\Big(\frac{r}{R_0}\Big)^{2-\frac{n}{k}}+\frac{r^2}{2R_0^2}\le\Big(\frac{r}{R_0}\Big)^{2-\frac{n}{k}} <\Big(\frac{r}{r_0}\Big)^{2-\frac{n}{k}}.$$
 On $\p\Omega$, since $B_{r_0}\subset\subset \Omega$, we have
 $$u^{\varepsilon,r}=1<\Big(\frac{|x|}{r}\Big)^{2-\frac{n}{k}}=\bar u,$$ 
 where we use $2k>n$. Thus by comparison principal, we have $u<\bar u$ in $\overline\Omega_r$.
 
Our goal is to prove the following estimates.
 \begin{theorem}\label{apu20720}
 	Assume $ k>\frac{n}{2}$. For  sufficiently small $\varepsilon$ and  $r$,    $u^{\varepsilon, r}$ satisfies
 	\begin{align*}
 		\left\{
 		\begin{aligned}
 			C^{-1}|x|^{\frac{2k-n}{k}}\le u^{\varepsilon, r}(x)\le&  C|x|^{\frac{2k-n}{k}},\\
 			|Du^{\varepsilon, r}|(x)\le& C|x|^{\frac{k-n}{k}},\\
 			|D^2u^{\varepsilon,r }|(x)\le& C|x|^{-\frac{n}{k}},
 		\end{aligned}
 		\right.
 	\end{align*}
 	where $C$ is a uniform constant  independent of $\varepsilon$ and $r$.
 \end{theorem}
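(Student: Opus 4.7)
The plan has three parts: a sharp two-sided $C^0$ estimate from barriers, a scaling argument that reduces the claimed gradient and Hessian decay to uniform bounds on a rescaled solution, and the use of the global estimates of Ma-Zhang~\cite{MaZhang2022arxiv} together with boundary estimates to close the argument.

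\textbf{Step 1: $C^0$ bounds.} The upper inequality $u^{\varepsilon,r}\le (|x|/r_0)^{2-n/k}$ is precisely the comparison with $\bar u$ already checked in the excerpt. For the matching lower bound I use the smoothly $k$-convex subsolution $\underline u$ from the preceding lemma: once $\varepsilon<\epsilon_1$ we have $S_k(D^2\underline u)\ge\epsilon_1>\varepsilon$, and $\underline u$ shares the boundary values of $u^{\varepsilon,r}$ on both $\partial\Omega$ (value $1$, since $K_0\Phi^0=0$ there) and $\partial B_r$ (value $\tfrac12(r/R_0)^{2-n/k}+r^2/(2R_0^2)$). The comparison principle gives $\underline u\le u^{\varepsilon,r}$ in $\Omega_r$; combined with the explicit bound $\underline u=w\ge\tfrac12(|x|/R_0)^{2-n/k}$ away from the boundary layer $d(x)\le 2\mu_0$ this yields $u^{\varepsilon,r}(x)\ge C^{-1}|x|^{2-n/k}$ near the singularity, while near $\partial\Omega$ the bound is trivial as $|x|^{2-n/k}$ is itself $O(1)$.

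\textbf{Step 2: scaling reduction.} For $x_0\in\overline{\Omega_r}$ set $\rho:=|x_0|$ and define
\[
v(y):=\rho^{-(2-n/k)}\,u^{\varepsilon,r}(\rho y),\qquad y_0:=x_0/\rho\in\partial B_1.
\]
The $k$-homogeneity of $S_k$ forces $S_k(D^2 v)=\rho^n\varepsilon\le R_0^n\varepsilon$, so $v$ solves a $k$-Hessian equation with uniformly bounded right-hand side on the rescaled domain $\rho^{-1}\Omega\setminus\overline{B_{r/\rho}}$, and Step 1 translates to $C^{-1}|y|^{2-n/k}\le v(y)\le C|y|^{2-n/k}$. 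The statement of the theorem becomes precisely the scale-invariant assertion that $|Dv|(y_0)$ and $|D^2v|(y_0)$ are bounded by a constant independent of $\rho,\varepsilon,r$.

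\textbf{Step 3: boundary estimates and main obstacle.} The bounds on $|Dv|(y_0)$ and $|D^2v|(y_0)$ I deduce from the global gradient and second order estimates of Ma-Zhang~\cite{MaZhang2022arxiv} applied to $v$, modulo uniform boundary $C^1$ and $C^2$ estimates on both components of $\partial(\rho^{-1}\Omega_r)$. On the outer piece $\rho^{-1}\partial\Omega$ the rescaled barriers $\rho^{-(2-n/k)}\underline u(\rho\cdot)$ and $\rho^{-(2-n/k)}\bar u(\rho\cdot)$ are uniformly $C^2$ and supply the normal derivative bound; combined with the strict $(k-1)$-convexity of $\partial\Omega$ (preserved under dilation) the Caffarelli-Nirenberg-Spruck boundary argument gives the boundary $C^2$ estimate there. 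The hard part is the inner boundary $\partial B_{r/\rho}$, because this sphere has principal curvatures $-\rho/r$ as a boundary of $\Omega_r$ and so is \emph{not} $(k-1)$-convex, and the standard boundary second order argument breaks down. My approach would be to use that $\underline u$ and $\bar u$ are both smooth and match $u^{\varepsilon,r}$ to first order on $\partial B_r$: the tangential second derivatives of the rescaled $v$ can be read off the boundary data, the mixed tangential-normal components are pinned down by comparison with the barriers' normal derivatives, and the remaining normal-normal component follows from the equation $S_k(D^2 v)=\rho^n\varepsilon$. Producing constants genuinely uniform in $r$ at this inner boundary, where the missing convexity is substituted by the explicit matched barriers, is the main technical obstacle of the theorem.
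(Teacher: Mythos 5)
Your overall strategy --- two--sided $C^0$ barriers, scaling by $|x_0|$, and then reducing to the global weighted $C^1$ and $C^2$ estimates of \cite{MaZhang2022arxiv} plus boundary estimates --- is the same as the paper's, and Steps 1 and 2 are sound. Step 1 in particular is essentially verbatim the paper's $C^0$ proposition.

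The gap is in Step 3, which you correctly flag as the crux but leave unresolved, and the diagnosis you give of the difficulty is not the right one. You worry that $\partial B_r$ fails to be $(k-1)$-convex as a boundary component of $\Omega_r$, and that therefore ``the standard boundary second order argument breaks down.'' But the $(k-1)$-convexity of the domain boundary enters the boundary $C^2$ theory through the tangential Hessian of the solution on the boundary, and here that quantity can be computed directly and turns out to have the \emph{right} sign. Since $u^{\varepsilon,r}$ is constant on $\partial B_r$, at a point $x_0=(0,\dots,0,r)$ one has $u_{\alpha\beta}(x_0)=r^{-1}u_\nu(x_0)\,\delta_{\alpha\beta}$ for $1\le\alpha,\beta\le n-1$ (the sphere's second fundamental form times the outer normal derivative). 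The boundary gradient estimate at $\partial B_r$ gives $c\,r^{-(n-k)/k}\le u_\nu\le C\,r^{-(n-k)/k}$, hence $\{u_{\alpha\beta}\}\ge c\,r^{-n/k}\{\delta_{\alpha\beta}\}$: the tangential Hessian is positive definite with the correct scaling. This \emph{positivity} --- not just boundedness --- is what makes the double normal estimate close: one divides the equation by $S_{k-1}(\{u_{\alpha\beta}\})\ge c\,r^{-n(k-1)/k}>0$ to get $u_{nn}\le Cr^{-n/k}$, and the lower bound on $u_{nn}$ follows from $k$-convexity. Your sketch ``the normal-normal component follows from the equation'' is incomplete without this lower bound on $S_{k-1}$ of the tangential Hessian, and nothing in your argument supplies it.

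The mixed tangential--normal step is also not what you describe. The supersolution $\bar u=(|x|/r_0)^{2-n/k}$ does \emph{not} match $u^{\varepsilon,r}$ on $\partial B_r$ (it is strictly larger there), so ``comparison with the barriers' normal derivatives'' does not pin down $u_{\alpha n}$. The paper instead exploits the rotational symmetry of $\partial B_r$: the tangential vector field $T_\alpha=x_\alpha\partial_n-x_n\partial_\alpha$ annihilates $u^{\varepsilon,r}$ on $\partial B_r$ and satisfies $F^{ij}(T_\alpha u)_{ij}=T_\alpha\log\varepsilon=0$, so $w=A_1(1-r^{-1}x_n)\pm r^{(n-2k)/k}T_\alpha u$ is a barrier in $B_r\cap B_{r/2}(x_0)$, giving $|u_{\alpha n}(x_0)|\le Cr^{-n/k}$. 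In short, the ``missing convexity'' of the inner boundary is a red herring; what actually carries the argument is the combination of (i) constant boundary data on a sphere and a positive normal derivative, which forces a definite tangential Hessian, and (ii) the invariance of the equation under the rotations generated by $T_\alpha$. Without these two observations the proposal does not establish the Hessian decay $|D^2u^{\varepsilon,r}|\le C|x|^{-n/k}$.
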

\subsection{Case 2: $k<\frac{n}{2}$}
Since the Green function in this case is $-|x|^{\frac{2k-n}{k}}$, we want to solve the following $k$-Hessian equation .
\begin{equation}\label{case2Equa}
\left\{\begin{aligned}S_{k}(D^2 u)=0 \qquad&\text{in}\quad \Omega\setminus\{0\},\\
	u=-1\quad&\text{on}\quad \p\Omega\\
u=-|x|^{\frac{2k-n}{k}}+O(1) \ &\text{as}\ {x\rightarrow0}.
\end{aligned}
\right.
\end{equation}

Denote $w:=-
{{|x|}}^{2-\frac{n}{k}}+{R_0}^{2-\frac{n}{k}}-1+a_0\frac{|x|^2}{2R_0^2}$. We  choose  $a_0=\Big((1-\tau_0)^{2-\frac{n}{k}}-1\Big)R_0^{2-\frac{n}{k}}$ such that $w<-\frac{1}{2}\Big((1-\tau_0)^{2-\frac{n}{k}}-1\Big)R_0^{2-\frac{n}{k}}-1$ in $\overline\Omega$. {By the concavity of $S_k^\frac1k$,} we also have
\begin{equation*}
{S_k^\frac1k(D^2w)=S_k^\frac1k\big(D^2(-|x|^{2-\frac nk})+\frac{a_0}{2R_0^2}D^2|x|^2\big)\geq S_k^\frac1k(\frac{a_0}{R_0^2}I),}
\end{equation*}
then
\begin{align*}
	S_{k}(D^2w)
	\ge {C_n^k}a_0^kR_0^{-2k}.
\end{align*}
Then we construct a smoothly and strictly $k$-convex function $\underline {u}$ by Lemma \ref{Guan2002lemma} as follows. 
\begin{lemma}
	There exists a  strictly $k$-convex function $\underline {u}\in C^{\infty}{(\overline\Omega_r)}$ satisfying
	\begin{align}
		\underline{u}=&
		\left\{\begin{aligned}\label{0817:sub2}
			K_0\Phi^0-1
			\quad&\text{if}\ d(x)\le \frac{\mu_0}{M_0}, \\
			w\quad &\text{if}\ d(x)>\mu_0,
		\end{aligned}
		\right.\\
		\underline{u}\ge& \max\left\{ w, K_0\Phi^0-1\right\}\ \ \text{if}\ \frac{\mu_0}{M_0}\le d(x)\le \mu_0,\notag\\
		S_k(D^2 \underline{u})\ge&\epsilon_1:= \min\{{C_n^k}a_0^kR_0^{-2k}, {K_0^k}\epsilon_0\} \quad \text{in}\ \Omega_r,\notag
	\end{align}
where $K_0$ and $M_0$ are uniform constants.
\end{lemma}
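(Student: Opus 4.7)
The plan is to repeat the Case~1 construction almost verbatim. Set $g := K_0 \Phi^0 - 1$ (replacing $K_0\Phi^0 + 1$) and $h := w$, pick a threshold $\delta := a_0/4$, and apply Lemma~\ref{Guan2002lemma} on $U := \Omega_{2\mu_0}$. The output will be glued to $w$ on $\Omega_r \setminus \Omega_{2\mu_0}$. As in Case~1, the uniform constants $K_0$ and $M_0$ are engineered so that the indecision set $\{|g-h|<\delta\}$ is confined to the annulus $\{\mu_0/M_0 < d(x) < \mu_0\}$; property~(1) of the gluing lemma then produces the two explicit formulas in~\eqref{0817:sub2}, while the inequality $\underline u \ge \max\{g,h\}$ is automatic.

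The first nontrivial step is to confirm $h-g>\delta$ on the outer annulus $\{\mu_0 \le d(x) \le 2\mu_0\}$. Because the exponent $2-n/k$ is now negative, $w$ is not automatically nonnegative, so the one-line bound $h-g\ge -g\ge 1$ used in Case~1 fails; this is the only place where the argument genuinely changes, and it is the main (minor) obstacle. I will instead invoke Lemma~\ref{lem2.2}, which places $B_{r_0}$ inside $\{d>2\mu_0\}$ and hence forces $|x|\ge r_0$ on $\Omega_{2\mu_0}$. Then $-|x|^{2-n/k} \ge -r_0^{2-n/k}$, and combining this with $K_0 t_0^{-1}(1-e^{-t_0 d})\ge K_0 t_0^{-1}(1-e^{-t_0\mu_0})$ on the annulus yields an estimate of the form $h-g \ge c_1 K_0 - c_2$ with positive constants $c_1,c_2$ depending only on $r_0,R_0,n,k,t_0,\mu_0$; taking $K_0$ large enough delivers $h-g>\delta$.

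The second step is $g-h>\delta$ on the boundary layer $\{d(x)\le \mu_0/M_0\}$. Here I exploit $\Omega\subset\subset B_{(1-\tau_0)R_0}$ together with the negative exponent to obtain $|x|^{2-n/k} - R_0^{2-n/k} \ge ((1-\tau_0)^{2-n/k}-1)R_0^{2-n/k}=a_0$, and $a_0|x|^2/(2R_0^2)\le a_0/2$, giving $g-h \ge K_0 t_0^{-1}(e^{-t_0 d}-1) + a_0/2$. Since the first term tends to $0$ as $d\to 0$, the implicit relation $K_0(1-e^{-t_0\mu_0/M_0})=t_0\delta$ (with $K_0$ already fixed from the previous step) determines $M_0$ and forces $g-h\ge a_0/4=\delta$ on the layer.

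With both separations in hand, Lemma~\ref{Guan2002lemma} yields a smooth, strictly $k$-convex $\underline u$ on $\overline{\Omega_{2\mu_0}}$ that equals $w$ on a neighborhood of $\{d=2\mu_0\}$, so the extension $\underline u := w$ on $\Omega_r \setminus \Omega_{2\mu_0}$ is smooth across that level set. For the pointwise $S_k$ bound: on the boundary layer one has $S_k(D^2\underline u)=K_0^k S_k(D^2\Phi^0)\ge K_0^k\epsilon_0$ by Lemma~\ref{lem2.2}; on the outer piece $S_k(D^2 w)\ge C_n^k a_0^k R_0^{-2k}$ via the concavity inequality $S_k^{1/k}(D^2 w)\ge S_k^{1/k}((a_0/R_0^2) I)$ already displayed in the excerpt just before the lemma; in the gluing annulus, property~(2) of Guan's lemma combined with concavity of $S_k^{1/k}$ gives $S_k^{1/k}(D^2\underline u) \ge \min\{S_k^{1/k}(D^2 w),\, K_0 S_k^{1/k}(D^2\Phi^0)\}$. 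All three pieces dominate $\epsilon_1=\min\{C_n^k a_0^k R_0^{-2k}, K_0^k\epsilon_0\}$, completing the construction.
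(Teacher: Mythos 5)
Your proposal matches the paper's own argument for this lemma essentially line by line: same choices $g=K_0\Phi^0-1$, $h=w$, $\delta=a_0/4$, the same use of $|x|\ge r_0$ on the outer annulus and $\Omega\subset\subset B_{(1-\tau_0)R_0}$ on the boundary layer to pin down $K_0$ and $M_0$, and the same three-case verification of the $S_k$ bound via the concavity of $S_k^{1/k}$. The only cosmetic difference is that you phrase the gluing-region estimate as a minimum rather than writing out the convex combination, and you take $K_0$ ``large enough'' rather than the paper's explicit value $K_0=t_0 r_0^{2-n/k}/(1-e^{-t_0\mu_0})$, but these are equivalent.
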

\begin{proof}
	Applying Guan's Lemma \ref{Guan2002lemma} for $U=\Omega_{2\mu_0}$, $g=K_0\Phi^0-1$, $h=w$ and $\delta=\frac{1}{4}\Big((1-\tau_0)^{2-\frac{n}{k}}-1\Big)R_0^{2-\frac{n}{k}}$, we get a strictly and smoothly $k$-convex function $\underline{u}$ in $\Omega_{2\mu_0}$. In the following, we prove \eqref{0817:sub2}.
	
	For any $x\in \overline\Omega_{2\mu_0}\setminus\Omega_{\mu_0}:=\{x\in\overline\Omega:\mu_0\le d(x)\le {2\mu_0}\}$, 
	we have
	\begin{align}	
h-g=&-|x|^{2-\frac{n}{k}}+R_0^{2-\frac{n}{k}}+a_0\frac{|x|^2}{2R_0^2}-K_0\Phi^0\notag\\
\ge& -{r_0}^{2-\frac{n}{k}}+R_0^{2-\frac{n}{k}}+t_0^{-1} K_0 (1-e^{-t_0\mu_0})\notag\\
=&R_0^{2-\frac{n}{k}},
\end{align}
where we use  $K_0=\frac{t_0{r_0}^{2-\frac{n}{k}}}{1-e^{-t_0\mu_0}}$.
	This implies $\underline{u}=w$ in $\overline\Omega_{2\mu_0}\setminus\Omega_{\mu_0}$.
	
	For any $x\in\overline{\Omega_{\frac{\mu_0}{M_0}}}:=\{x\in\overline\Omega:d(x)\le \frac{\mu_0}{M_0}\}$, since $\Omega\subset\subset B_{(1-\tau_0)R_0}$, we have
	\begin{align}
		g-h=& |x|^{2-\frac{n}{k}}-R_0^{2-\frac{n}{k}}-a_0\frac{|x|^2}{2R_0^2}+K_0\Phi^0\notag\\
		\ge&\frac{1}{2}\Big((1-\tau_0)^{2-\frac{n}{k}}-1\Big)R_0^{2-\frac{n}{k}}+t_0^{-1}K_0(1-e^{-t_0\frac{\mu_0}{M_0}})\notag\\
		=&\frac{1}{4}\Big((1-\tau_0)^{2-\frac{n}{k}}-1\Big)R_0^{2-\frac{n}{k}}:=\delta
	\end{align}
	where $M_0$ is defined by $K_0(1-e^{-t_0\frac{\mu_0}{M_0}})=2t_0\delta$.
	This implies $\underline{u}=K_0\Phi^0+1$ in $\Omega_{\frac{\mu_0}{M_0}}$.\\
	At last, we define $\underline{u}=w$ in $\Omega_r\setminus\Omega_{2\mu_0}$. {In $\Omega_\frac{\mu_0}{M_0}$, by Lemma \ref{lem2.2}, $S_k(D^2\underline u)=S_k(K_0\Phi^0)\geq K_0^k\epsilon_0$. In $\Omega_r\backslash \Omega_{2\mu_0}$, $S_k(D^2\underline u)=S_k(D^2w)\geq C_n^ka_0^kR_0^{-2k}$. In $\Omega_{2\mu_0}\backslash \Omega_{\frac{\mu_0}{M_0}}$, by the concavity of $S_k^\frac1k$,  $S_k^\frac1k(D^2\underline u)\geq \frac{1+t(x)}{2}S_k^\frac1k(D^2w)+\frac{1-t(x)}2S_k^\frac1k(K_0D^2\Phi^0).$ } The proof is complete.
\end{proof}
We consider the approximating equation
\begin{equation}\label{case2EquaAppr}
\left\{\begin{aligned}S_{k}(D^2 u^{\varepsilon,r})= \varepsilon  \quad&\text{in}\quad \Omega_r,\\
u^{\varepsilon,r}= \underline{u}\quad &\text{on}\quad \partial\Omega_r.
\end{aligned}
\right.
\end{equation}
Then $\underline u$ is a strict subsolution of the above $k$-Hessian equation for any $\varepsilon$ small, by Guan \cite{guan1994cpde} (see also Guan \cite{guan2014duke}), equation \eqref{case2EquaAppr} has a strictly $k$-convex solution $u^{\varepsilon,r}\in C^{\infty}(\overline \Omega_r)$. By maximum principle and assmuing $r$ is sufficiently small, $u^{\varepsilon,r}< -1$ in $\Omega_r$.
We want to derive uniform $C^2$ estimates of $u^{\varepsilon,r}$, which are independent of $\varepsilon$ and $r$.
We prove the following
\begin{theorem}\label{apu10720}
Assume $1\le k<\frac{n}{2}$. For every sufficiently small $\varepsilon$ and  $r$,    $u^{\varepsilon, r}$ satisfies
\begin{align*}
\left\{
\begin{aligned}
C^{-1}|x|^{-\frac{n-2k}{k}}\le& -u^{\varepsilon, r}(x)\le C|x|^{-\frac{n-2k}{k}},\\
|Du^{\varepsilon, r}|(x)\le& C|x|^{-\frac{n-k}{k}},\\
|D^2u^{\varepsilon, r}|(x)\le& C|x|^{-\frac{n}{k}},
\end{aligned}
\right.
\end{align*}
where $C$ is a uniform constant independent of $\varepsilon$ and $r$.
\end{theorem}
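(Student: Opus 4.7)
The plan is to combine three ingredients: (i) comparison with a suitable sub/supersolution pair to produce the $C^0$ bound with the correct singular profile; (ii) barrier constructions on $\partial\Omega$ and $\partial B_r$ together with the global gradient and second-order estimates of Ma--Zhang \cite{MaZhang2022arxiv} to bound $|Du^{\varepsilon,r}|$ and $|D^2u^{\varepsilon,r}|$ throughout $\overline\Omega_r$; (iii) a rescaling argument to convert these global bounds into the pointwise decay in $|x|$.

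For (i), the subsolution $\underline u$ built in the lemma preceding the theorem satisfies $S_k(D^2\underline u)\geq \epsilon_1>\varepsilon$ and has the profile $w(x)=-|x|^{2-n/k}+O(1)$ outside a boundary layer, so the comparison principle gives $u^{\varepsilon,r}\geq \underline u\geq -|x|^{2-n/k}-C$. For the upper bound I would take $\bar u(x):=-|x|^{2-n/k}+r_0^{2-n/k}-1$, which is $k$-convex with $S_k(D^2\bar u)=0\leq\varepsilon$, and satisfies $\bar u\geq u^{\varepsilon,r}$ on $\partial\Omega\cup\partial B_r$ because $B_{r_0}\subset\Omega\subset B_{R_0}$ and the exponent $2-n/k$ is negative. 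Comparison then yields the first line of the estimate.

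For (ii), the Dirichlet data on $\partial B_r$ is $-r^{2-n/k}+O(1)$, so standard upper/lower barrier constructions based on $\underline u$ and $\bar u$ modified by a tangential perturbation of the distance function give $|Du^{\varepsilon,r}|\leq C$ on $\partial\Omega$ and $|Du^{\varepsilon,r}|\leq Cr^{-(n-k)/k}$ on $\partial B_r$. The boundary $C^2$ estimate on $\partial\Omega$ follows from the classical Caffarelli--Nirenberg--Spruck argument using the strict $(k-1)$-convexity of $\partial\Omega$ and the $k$-convex subsolution $\underline u$; on $\partial B_r$ the sphere is automatically strictly $(k-1)$-convex and a barrier argument adapted to the scale gives $|D^2u^{\varepsilon,r}|\leq Cr^{-n/k}$ there. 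The global gradient and second-order estimates of \cite{MaZhang2022arxiv} then propagate these boundary bounds to all of $\overline\Omega_r$.

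Finally, for (iii), fix $x_0\in\Omega_r$ with $|x_0|=\rho\in(2r,\operatorname{diam}(\Omega)/2)$ and rescale
\[
v(y):=\rho^{(n-2k)/k}\,u^{\varepsilon,r}(\rho y),\qquad y\in A:=\{\tfrac12<|y|<2\}.
\]
Then $v$ is $k$-convex with $S_k(D^2 v)=\rho^n\varepsilon$, and by (i) the $C^0$-norm of $v$ on $A$ is bounded independently of $\rho$, $r$ and $\varepsilon$. Applying the boundary-to-interior argument of (ii) on the fixed annulus $A$ yields $|Dv|+|D^2 v|\leq C$ at $y=x_0/\rho$; unscaling gives $|Du^{\varepsilon,r}|(x_0)\leq C|x_0|^{-(n-k)/k}$ and $|D^2u^{\varepsilon,r}|(x_0)\leq C|x_0|^{-n/k}$. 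Points with $|x|$ comparable to $r$ or to $\operatorname{diam}(\Omega)$ are absorbed directly into the boundary estimates, which carry the correct blow-up rate built in. The main technical difficulty is the boundary $C^2$ estimate on $\partial B_r$ with the sharp rate $r^{-n/k}$: the Dirichlet data already has second derivatives of this order, so the barriers must be chosen delicately to keep all constants independent of $r$ and $\varepsilon$; the scale invariance of the homogeneous $k$-Hessian equation is precisely what makes this estimate, and the rescaling in step (iii), uniform.
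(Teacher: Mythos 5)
Your steps (i) and (ii) match the paper's strategy: the $C^0$ bound comes from comparing with the constructed subsolution $\underline u$ and the supersolution $\bar u=-|x|^{2-n/k}+r_0^{2-n/k}-1$, and the first and second derivative estimates come from boundary barriers on $\partial\Omega$ and $\partial B_r$ combined with the Ma--Zhang maximum-principle estimates. However, step (iii) reveals a gap, and it stems from a misreading of what the Ma--Zhang estimates actually give.

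The estimates of \cite{MaZhang2022arxiv} are for the \emph{weighted} quantities $P=|Du|^2(-u)^{-2(n-k)/(n-2k)}$ and $G=u_{\xi\xi}\varphi(P)h(u)$ with $h(u)=(-u)^{-n/(n-2k)}$; the theorems state $\max_{\Omega_r}P\le\max_{\partial\Omega_r}P$ and $\max_{\Omega_r}G\le C+\max_{\partial\Omega_r}G$. Once the boundary bounds of step (ii) give $\max_{\partial\Omega_r}P\le C$ and $\max_{\partial\Omega_r}G\le C$ (note the boundary data already carries the correct $r$-scaling), the pointwise decay in $|x|$ is immediate: combining $P\le C$ with the $C^0$ estimate $-u\sim|x|^{-(n-2k)/k}$ gives $|Du|\le C|x|^{-(n-k)/k}$, and combining $G\le C$ with the $C^0$ and gradient bounds gives $|D^2u|\le C|x|^{-n/k}$. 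There is no separate ``propagate a uniform bound, then improve by rescaling'' step; the weight does that work.

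As written, your step (iii) does not stand on its own. After rescaling to the annulus $A=\{\tfrac12<|y|<2\}$ you propose to ``apply the boundary-to-interior argument of (ii)'' to bound $|Dv|+|D^2v|$ at $|y|=1$, but the two spheres $\{|y|=\tfrac12\}$ and $\{|y|=2\}$ are interior level spheres of $\Omega_r$ and you have no Dirichlet data and no barrier construction there, so none of (ii) applies. To make an interior-rescaling approach rigorous you would need genuine \emph{interior} gradient estimates (Trudinger, Chou--Wang) and a Pogorelov-type interior $C^2$ estimate for the $k$-Hessian equation on $A$, neither of which you invoke. That route is possible but is a different and harder path than what the paper takes; with the weighted Ma--Zhang estimates the entire step (iii) is unnecessary.
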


\subsection{{Case 3:} $k=\frac{n}{2}$}
Since the Green function in this case is $\log |x|$, we want to solve the $k$-Hessian equation as follows
\begin{equation}\label{case3Equa}
\left\{\begin{aligned}S_{\frac{n}{2}}(D^2 u)=&0 \quad\text{in}\quad \mathring\Omega,\\
u=&0\ \ \ \text{on}\ \partial\Omega,\\
u(x)=&\log |x|+O(1) \ \text{as}\ {x\rightarrow 0}.
\end{aligned}
\right.
\end{equation}

\subsubsection{The approximating equation }

Denote $w:=\log\frac{|x|}{R_0}+a_0\frac{|x|^2}{2R_0^2}$ where $a_0=\frac{1}{2}\log\frac{1}{1-\tau_0}>0$ such that $w<\frac{1}{2}\log (1-\tau_0)$,
 {By the concavity of $S_k^\frac1k$,} we also have
\begin{equation*}
{S_\frac n2^\frac2n(D^2w)=S_k^\frac1k\big(D^2\log\frac{|x|}{R_0}+\frac{a_0}{2R_0^2}D^2|x|^2\big)\geq S_k^\frac1k(\frac{a_0}{R_0^2}I),}
\end{equation*}
then
\begin{align*}
	S_{\frac n2}(D^2w)
	\ge {C_n^\frac n2}a_0^{\frac n 2}R_0^{-n}.
\end{align*}
Then we construct a smoothly and strictly $k$-convex function $\underline {u}$ by Lemma \ref{Guan2002lemma} as follows. 
\begin{lemma}
	There exists a  strictly $k$-convex function $\underline {u}\in C^{\infty}{(\overline\Omega_r)}$ satisfying
	\begin{align}
		\underline{u}=&
		\left\{\begin{aligned}\label{0817:sub3}
			K_0\Phi^0
			\quad&\text{if}\ d(x)\le \frac{\mu_0}{M_0}, \\
			w\quad &\text{if}\ d(x)>\mu_0,
		\end{aligned}
		\right.\\
		\underline{u}\ge& \max\left\{ w, K_0\Phi^0\right\}\ \ \text{if}\ \frac{\mu_0}{M_0}\le d(x)\le \mu_0,\notag\\
		S_k(D^2 \underline{u})\ge&\epsilon_1:= \min\{{C_n^\frac n2}a_0^{\frac n 2}R_0^{-n}, {K_0^\frac n2}\epsilon_0\} \quad \text{in}\ \Omega,\notag
	\end{align}
	where $K_0$ and $M_0$ are uniform constants.
\end{lemma}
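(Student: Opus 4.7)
The argument is a direct adaptation of the two preceding lemmas (Cases 1 and 2) to the logarithmic Green function. I would apply Guan's Lemma \ref{Guan2002lemma} on the neighbourhood $U=\Omega_{2\mu_0}$ with the two building blocks $g=K_0\Phi^0$ and $h=w$, where $w=\log(|x|/R_0)+a_0|x|^2/(2R_0^2)$. The free parameters $K_0$, $M_0$ and the gluing threshold $\delta$ have to be chosen so that (a) on the outer band $\{\mu_0\le d(x)\le 2\mu_0\}$ one has $h-g>\delta$, forcing $\underline u=w$ there; and (b) on the inner collar $\{d(x)\le \mu_0/M_0\}$ one has $g-h>\delta$, forcing $\underline u=K_0\Phi^0$ there. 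Outside $\Omega_{2\mu_0}$, simply extend $\underline u:=w$; the extension is smooth because $\underline u$ already agrees with $w$ on $\{\mu_0\le d\le 2\mu_0\}$.

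\textbf{Gluing inequalities.} To check (a), I would use Lemma \ref{lem2.2}: $B_{r_0}\subset\subset\{d>2\mu_0\}$, so $|x|\ge r_0$ on $\Omega_{2\mu_0}$, giving $w\ge \log(r_0/R_0)$. Since $\Phi^0=t_0^{-1}(e^{-t_0 d}-1)$ satisfies $-\Phi^0\ge t_0^{-1}(1-e^{-t_0\mu_0})$ when $d\ge \mu_0$,
\[
h-g=w-K_0\Phi^0\;\ge\;\log\tfrac{r_0}{R_0}+t_0^{-1}K_0\bigl(1-e^{-t_0\mu_0}\bigr).
\]
Choosing $K_0$ sufficiently large (e.g.\ so that the second term exceeds $\log(R_0/r_0)+\delta$) achieves $h-g\ge\delta$. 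To check (b), use the bound $w\le \tfrac12\log(1-\tau_0)$ supplied before the statement, which yields $-w\ge -\tfrac12\log(1-\tau_0)=a_0$ on $\overline\Omega$. Then for $d\le\mu_0/M_0$,
\[
g-h=-w-t_0^{-1}K_0\bigl(1-e^{-t_0 d}\bigr)\;\ge\;a_0-t_0^{-1}K_0\bigl(1-e^{-t_0\mu_0/M_0}\bigr).
\]
With $K_0$ already fixed, pick $M_0$ large enough so that the subtracted term is at most $a_0/2$, and take $\delta:=a_0/2$; both inequalities then hold with the same $\delta$. This gives the piecewise identification in \eqref{0817:sub3}, while Guan's Lemma directly provides the bound $\underline u\ge\max\{w,K_0\Phi^0\}$ throughout $U$.

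\textbf{Lower bound on $S_{n/2}$.} On $\{d\le\mu_0/M_0\}$, $\underline u=K_0\Phi^0$ and Lemma \ref{lem2.2} gives $S_{n/2}(D^2\underline u)\ge K_0^{n/2}\epsilon_0$. On $\Omega_r\setminus\Omega_{2\mu_0}$ (and on the outer band), $\underline u=w$ and the computation preceding the statement gives $S_{n/2}(D^2 w)\ge C_n^{n/2}a_0^{n/2}R_0^{-n}$. In the transition region $\{\mu_0/M_0\le d\le \mu_0\}$, Guan's Lemma part (2) provides $|t(x)|\le 1$ with
\[
D^2\underline u\;\ge\;\tfrac{1+t(x)}{2}D^2 w+\tfrac{1-t(x)}{2}K_0 D^2\Phi^0,
\]
so by concavity of $S_{n/2}^{2/n}$,
\[
S_{n/2}^{2/n}(D^2\underline u)\;\ge\;\tfrac{1+t(x)}{2}S_{n/2}^{2/n}(D^2 w)+\tfrac{1-t(x)}{2}S_{n/2}^{2/n}(K_0 D^2\Phi^0)\;\ge\;\epsilon_1^{2/n},
\]
which implies $S_{n/2}(D^2\underline u)\ge \epsilon_1$. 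The same concavity argument shows strict $k$-convexity.

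\textbf{Expected difficulty.} There is no real analytical obstacle; the only delicate point is the bookkeeping that ensures the constants $K_0$ (chosen to make (a) work) and $M_0$ (chosen to make (b) work) are compatible and depend only on the a priori data $r_0,R_0,\tau_0,\mu_0,t_0$. This is handled exactly as in the preceding two lemmas, and the fact that the Green function is now logarithmic rather than a power merely replaces the quantities $|x|^{2-n/k}$ by $\log|x|$ throughout, leaving the structure of the estimates unchanged.
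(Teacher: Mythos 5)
Your proposal is correct and follows essentially the same route as the paper: apply Guan's gluing lemma on $U=\Omega_{2\mu_0}$ with $g=K_0\Phi^0$, $h=w$, verify $h-g>\delta$ on the outer band and $g-h>\delta$ on the inner collar by choosing $K_0,M_0,\delta$ from the a priori data (your $\delta=a_0/2=\tfrac14\log\tfrac{1}{1-\tau_0}$ is exactly the paper's choice, and your $K_0,M_0$ satisfy the same constraints), extend by $w$ outside $\Omega_{2\mu_0}$, and use concavity of $S_{n/2}^{2/n}$ for the lower bound on the transition region. The only cosmetic difference is that the paper fixes $K_0=\tfrac{2t_0\log(R_0/r_0)}{1-e^{-\mu_0 t_0}}$ explicitly rather than leaving it as ``sufficiently large''; your bookkeeping is otherwise equivalent.
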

\begin{proof}
	Applying Guan's Lemma \ref{Guan2002lemma} for $U=\Omega_{2\mu_0}$, $g=K_0\Phi^0$, $h=w$ and $\delta=\frac{1}{4}\log\frac{1}{1-\tau_0}>0$, we get a strictly and smoothly $k$-convex function $\underline{u}$ in $\Omega_{2\mu_0}$. In the following, we prove \eqref{0817:sub3}.\\
For any $x\in \overline\Omega_{2\mu_0}\setminus\Omega_{\mu_0}:=\{x\in\overline\Omega:\mu_0\le d(x)\le {2\mu_0}\}$, 
we have
\begin{align}	
	h-g=&\log\frac{|x|}{R_0}+a_0\frac{|x|^2}{2R_0^2}-K_0\Phi^0\notag\\
	\ge& \log\frac{r_0}{R_0}+t_0^{-1}K_0(1-e^{-\mu_0t_0})\notag\\
	=& \log\frac{R_0}{r_0}
	\ge\log\frac{1}{1-\tau_0}\notag\\
	>&\delta,
\end{align}
where we choose $K_0=\frac{2t_0\log\frac{R_0}{r_0}}{1-e^{-\mu_0t_0}}$ and we use $r_0<(1-\tau_0)R_0$.

For any $x\in\overline{\Omega_{\frac{\mu_0}{M_0}}}:=\{x\in\overline\Omega:d(x)\le \frac{\mu_0}{M}\}$, since $\Omega\subset\subset B_{(1-\tau_0)R_0}$, we have
\begin{align}
	g-h=&K_0\Phi^0-w\notag\\
	\ge& t_0^{-1}K_0(e^{-\frac{\mu_0t_0}{M_0}}-1)-\log\frac{|x|}{R_0}-a_0\frac{|x|^2}{2R_0^2}\notag\\
	\ge&t_0^{-1}K_0(e^{-\frac{\mu_0t_0}{M_0}}-1)-\log(1-\tau_0)-\frac{a_0}{2}\notag\\
	=&2\delta>\delta,
	\end{align}
where we use $a_0=2\delta=\frac{1}{2}\log\frac{1}{1-\tau_0}>0$ and $M_0$ is determined by $K_0(1-e^{-\frac{\mu_0t_0}{M_0}})=t_0\delta$.

We finish the proof by defining $\underline{u}=w$ in $\Omega_r\setminus\Omega_{2\mu_0}$.
	\end{proof}
Then
we consider the following approximating equation 
\begin{align}\label{case3EquaAppr}
	\left\{ {\begin{aligned}
			S_k(D^2 u^{\varepsilon,r})=\varepsilon  \quad  &\text{in}\ \  \Omega_r,\\
			u=\underline{u} \quad &\text{on}\ \   \partial \Omega_r .  \\
	\end{aligned}} \right.
\end{align}

%

We will prove the following pointwise estimates
\begin{theorem}\label{apu30720}
	Assume $ k=\frac{n}{2}$.  For every sufficiently small $\varepsilon$ and $r$, for any $x\in\overline\Omega_r$ $u^{\varepsilon, r}$ satisfies
	\begin{align*}
		\left\{
		\begin{aligned}
			 |u^{\varepsilon, r}(x)-\log|x||\le& C,\\
		|Du^{\varepsilon, r}|(x)\le& C|x|^{-1},\\
			|D^2u^{\varepsilon, r}|(x)\le& C|x|^{-2},
		\end{aligned}
		\right.
	\end{align*}
	where $C$ is a uniform constant which is independent of $\varepsilon$ and $r$.
	\end{theorem}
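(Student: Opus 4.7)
The argument parallels Theorems \ref{apu20720} and \ref{apu10720} with the logarithmic Green function replacing the power functions, and I would organize it in four steps.

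\textit{Step 1: Uniform $C^0$ estimate.} The subsolution $\underline u$ constructed in the preceding lemma gives, via the comparison principle, $u^{\varepsilon,r} \ge \underline u \ge \log|x| - C$ on $\overline\Omega_r$. For the upper bound, I would observe that $\log|x|$ itself solves $S_{n/2}(D^2 w) = 0$ away from the origin: the Hessian has one radial eigenvalue $-1/|x|^2$ and $n-1$ tangential eigenvalues $1/|x|^2$, and $\binom{n-1}{n/2}=\binom{n-1}{n/2-1}$ so the degree-$n/2$ symmetric sum cancels. Consequently $\bar u(x):=\log|x|+C_1$, with $C_1$ a large enough constant depending only on $r_0, R_0, \tau_0$, is a supersolution that dominates $u^{\varepsilon,r}$ on $\partial\Omega_r$, and comparison yields $u^{\varepsilon,r}\le \log|x|+C$.

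\textit{Step 2: Boundary $C^{1}$ and $C^{2}$ estimates.} On $\partial\Omega$ the strictness of $\underline u$ enables the barrier constructions of Caffarelli-Nirenberg-Spruck \cite{CNSIII} and Guan \cite{guan1994cpde} to give uniform $C^1$ and $C^2$ bounds for $u^{\varepsilon,r}$. On $\partial B_r$ the sandwich $\underline u\le u^{\varepsilon,r}\le \bar u$, with equality at $\partial B_r$, produces a normal-derivative estimate of order $1/r$; tangential derivatives vanish since $u^{\varepsilon,r}$ is constant on $\partial B_r$, and the double-normal derivative is then recovered from the equation $S_k(D^2u^{\varepsilon,r})=\varepsilon$.

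\textit{Step 3: Pointwise decay via rescaling.} With boundary control in hand, the global gradient and Pogorelov-type second-order estimates of \cite{MaZhang2022arxiv} give uniform $C^1$ and $C^2$ bounds on $\overline\Omega_r$ that degenerate near $0$. To sharpen these to the claimed pointwise rates, for any $x_0\in\Omega_r$ with $|x_0|$ small I set
\begin{equation*}
v(y):=u^{\varepsilon,r}(|x_0|y)-\log|x_0|.
\end{equation*}
Because $2k=n$, the equation transforms to $S_{n/2}(D^2 v)(y)=|x_0|^{n}\varepsilon$ and Step 1 becomes $|v(y)-\log|y||\le C$ independently of $x_0,\varepsilon,r$. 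Applied on a unit ball around $y_0:=x_0/|x_0|\in\mathbb S^{n-1}$, the interior gradient and second-order estimates of \cite{MaZhang2022arxiv} give $|Dv|(y_0)+|D^2v|(y_0)\le C$, which rescales back to $|Du^{\varepsilon,r}|(x_0)\le C|x_0|^{-1}$ and $|D^2u^{\varepsilon,r}|(x_0)\le C|x_0|^{-2}$; for $|x_0|\sim r$ the same estimates follow directly from Step 2.

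The main obstacle is the inner boundary estimate on $\partial B_r$: unlike the outer fixed boundary $\partial\Omega$, the sphere $\partial B_r$ is shrinking and the required normal derivative blows up as $r\to 0$. The barriers must be calibrated so that the comparison yields exactly the $1/r$ rate, exploiting the facts that $\bar u - u^{\varepsilon,r}$ and $u^{\varepsilon,r}-\underline u$ both vanish on $\partial B_r$ and that both $\bar u$ and $\underline u$ have radial derivative comparable to $1/r$ there. Once this is secured, the rescaling procedure and the estimates of \cite{MaZhang2022arxiv} finish the proof.
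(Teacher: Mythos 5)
Your Steps 1 and 2 are correct and align with the paper. The $C^0$ bound is obtained in the paper by comparison with $\log|x|-\log r_0$, but the essential fact is the one you note, namely that the eigenvalues $(-1,1,\dots,1)/|x|^2$ of $D^2\log|x|$ annihilate $S_{n/2}$ by the symmetry $\binom{n-1}{n/2}=\binom{n-1}{n/2-1}$. Your rescaled-barrier description on $\partial B_r$ matches the paper's treatment (rescale $y=x/r$ to $B_2\setminus B_1$ and compare with harmonic functions), although the asserted "equality on $\partial B_r$" holds only after one replaces $\log|x|+C_1$ by the harmonic extension of the actual boundary data of $u^{\varepsilon,r}$.

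Step 3 has a genuine gap. You invoke "interior gradient and second-order estimates of \cite{MaZhang2022arxiv}," but the estimates from that reference, as quoted and used in this paper, are of the form $\max_{\Omega_r}P\le\max\{\dots,\max_{\partial\Omega_r}P\}$ and $\max_{\Omega_r}G\le C+\max_{\partial\Omega_r}G$; they are global estimates that require control on the full boundary, not interior estimates. Applying them on a sub-ball $B_1(y_0)\subset\Omega_r/|x_0|$ would require knowing $P$ and $G$ on $\partial B_1(y_0)$, which is an interior boundary and is precisely what you are trying to bound. Interior gradient estimates for $k$-Hessian equations exist in the literature, but a pure interior Pogorelov-type $C^2$ estimate for general $k$, inhomogeneous right-hand side, and no boundary data is not available and cannot be taken for granted. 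The paper avoids this by building the decay directly into the globally estimated quantities: it shows $P=|Du^{\varepsilon,r}|^2e^{2u^{\varepsilon,r}}$ and $G=u^{\varepsilon,r}_{\xi\xi}\varphi(P)e^{2u^{\varepsilon,r}}$ are uniformly bounded on $\overline\Omega_r$ (by combining the global-with-boundary maximum-principle estimates with the boundary $C^1,C^2$ bounds of your Step 2), and then the weight $e^{2u^{\varepsilon,r}}\sim|x|^2$ turns the uniform bound on $G$ into $|D^2u^{\varepsilon,r}|\le C|x|^{-2}$, and the bound on $P$ into $|Du^{\varepsilon,r}|\le C|x|^{-1}$. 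Replacing your rescaling argument with this weighted-maximum-principle argument closes the gap.
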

In the next subsections, we will prove uniform $C^{2}$ estimates of solutions of equations \eqref{case1EquaAppr}, \eqref{case2EquaAppr} and \eqref{case3EquaAppr}. The key point is that these estimates are independent of $\varepsilon$ and $r$.
\subsection{$C^0$ estimates}
We first prove $u^{\varepsilon,r}$ is increasing with $r$. For any $r\ge \tilde r$, we have $u^{\varepsilon,\tilde r}\ge\underline{u}$ in  $\Omega_{\tilde r}$ and then
\begin{align*}
	\left\{\begin{aligned}
		S_k(D^2u^{\varepsilon,r})=\varepsilon=S_k(D^2u^{\varepsilon,\widetilde{r}})
		\quad &\text{in}\ \Omega_r,\\
		u^{\varepsilon,r}=u^{\varepsilon,\widetilde r}\quad &\text{on}\  \partial \Omega,\\
		u^{\varepsilon,r}=\underline{u}\le u^{\varepsilon,\widetilde r}\quad  &\text{on} \ \partial B_r.
	\end{aligned}
	\right.
\end{align*}
Applying the maximum principle in $\Omega_r$, we have
\begin{align}\label{C0Esti1}
	u^{\varepsilon,r}\le {u}^{\varepsilon,\tilde r}\ \text{in}\ \overline\Omega_r.
\end{align}
\begin{proposition}
		Let $u^{\varepsilon,r}$ be the $k$-convex solution of the approximating equation \eqref{case1EquaAppr}, \eqref{case2EquaAppr} or \eqref{case3EquaAppr}.
	For sufficiently small $\varepsilon$ and $r$,
	 for any $x\in \overline\Omega_r$, we have
	\begin{align*}
		\frac 12	R_0^{\frac{n}{k}-2}|x|^{2-\frac{n}{k}}\le u^{\varepsilon,r}(x)\le r_0^{\frac{n}{k}-2}|x|^{2-\frac{n}{k}} \quad &\text{if}\ k>\frac{n}{2},\\
		|x|^{-\frac{n-2k}{k}}-r_0^{\frac{n-2k}{k}}+1\le -u^{\varepsilon,r}(x)\le |x|^{-\frac{n-2k}{k}}-R_0^{\frac{n-2k}{k}}+1 \quad &\text{if}\ k<\frac{n}{2},\\
		\log|x|-\log R_0 \le u^{\varepsilon,r}\le \log|x|-\log r_0\quad &\text{if}\ k=\frac{n}{2}.
	\end{align*}
\end{proposition}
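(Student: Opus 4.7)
The plan is to deduce each bound from the comparison principle for $k$-convex functions applied to explicit barriers built from the relevant fundamental solution $G_k$. Since $S_k(D^2G_k)=0\le\varepsilon$ in $\R^n\setminus\{0\}$, any shift or positive multiple of $G_k$ is automatically $k$-convex with $S_k\le\varepsilon$ and hence serves as either a sub- or super-solution of the approximating equation $S_k(D^2\cdot)=\varepsilon$ in $\Omega_r$. The problem therefore reduces to matching boundary values on $\partial\Omega$ and on $\partial B_r$ in each of the three cases.

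In Case~1 I would take as barriers $\underline v(x):=\tfrac12 R_0^{n/k-2}|x|^{2-n/k}$ and $\bar v(x):=r_0^{n/k-2}|x|^{2-n/k}$. On $\partial\Omega$, the inclusion $B_{r_0}\subset\subset\Omega\subset\subset B_{(1-\tau_0)R_0}$ together with $2-n/k>0$ gives $\underline v<\tfrac12<1=u^{\varepsilon,r}<\bar v$; on $\partial B_r$, the explicit form $u^{\varepsilon,r}=\underline u=\tfrac12(r/R_0)^{2-n/k}+r^2/(2R_0^2)$ from the subsolution construction makes $\underline v(r)\le u^{\varepsilon,r}$ immediate, while $u^{\varepsilon,r}\le\bar v(r)=(r/r_0)^{2-n/k}$ follows for small $r$ from $r_0<R_0$ and the fact that the quadratic term is of strictly lower order than $r^{2-n/k}$. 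Cases~2 and~3 are entirely parallel, with the barriers respectively
\[
\underline v=-|x|^{2-n/k}+R_0^{2-n/k}-1,\qquad \bar v=-|x|^{2-n/k}+r_0^{2-n/k}-1
\]
and $\underline v=\log(|x|/R_0)$, $\bar v=\log(|x|/r_0)$; the boundary verifications use once more $B_{r_0}\subset\subset\Omega\subset\subset B_{(1-\tau_0)R_0}$ on $\partial\Omega$, and the explicit expression $\underline u=w$ on $\partial B_r$ (valid once $r<\mu_0$), together with the sign of the exponent $2-n/k$ (or of $\log$) and smallness of the quadratic perturbation $a_0|x|^2/(2R_0^2)$.

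With these barriers in hand, the comparison principle for the $k$-Hessian operator on $\overline\Omega_r$ (both functions are $k$-convex, $S_k(D^2\bar v)=S_k(D^2\underline v)=0\le\varepsilon=S_k(D^2u^{\varepsilon,r})$, and boundary data dominated as just checked) yields $\underline v\le u^{\varepsilon,r}\le\bar v$ throughout $\overline\Omega_r$, which is the asserted estimate in each case. The only care-point in the whole argument is arranging that the quadratic perturbation $a_0|x|^2/(2R_0^2)$ appearing in the definition of $w$ does not spoil the boundary comparison on $\partial B_r$; since that term is of strictly lower order than the leading singular piece of $w$, this is precisely what the hypothesis ``$r$ sufficiently small'' absorbs, and no separate argument is needed.
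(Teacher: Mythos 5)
Your treatment of the upper-barrier direction is correct and coincides with the paper's: in each case $\bar v$ is $k$-convex with $S_k(D^2\bar v)=0\le\varepsilon=S_k(D^2 u^{\varepsilon,r})$, and once the boundary inequalities on $\partial\Omega$ and $\partial B_r$ are verified (as you do), the comparison principle gives $u^{\varepsilon,r}\le\bar v$ throughout $\Omega_r$.

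The lower-barrier direction, however, has a genuine gap. You propose to conclude $\underline v\le u^{\varepsilon,r}$ from the same kind of comparison, with $\underline v$ a shifted/scaled fundamental solution satisfying $S_k(D^2\underline v)=0$. But $S_k(D^2\underline v)=0<\varepsilon$ makes $\underline v$ a \emph{supersolution} of $S_k=\varepsilon$, not a subsolution; the comparison principle (for $k$-convex $u,v$: if $S_k(D^2u)\ge S_k(D^2v)$ and $u\le v$ on the boundary, then $u\le v$ inside) requires $S_k(D^2\underline v)\ge S_k(D^2 u^{\varepsilon,r})=\varepsilon$ in order to yield $\underline v\le u^{\varepsilon,r}$, and that hypothesis fails. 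A function with $S_k=0$ can only be used as an upper barrier against a solution with $S_k=\varepsilon>0$; your own phrase ``serves as either a sub- or super-solution'' glosses over exactly this asymmetry. The paper avoids the issue by comparing $u^{\varepsilon,r}$ with the constructed strict subsolution $\underline u$ (which satisfies $S_k(D^2\underline u)\ge\epsilon_1>\varepsilon$ and has the same boundary data), obtaining $u^{\varepsilon,r}\ge\underline u$, and then checking \emph{pointwise} that $\underline u\ge w\ge$ the claimed lower barrier — the quadratic perturbation $a_0|x|^2/(2R_0^2)$ in $w$ is there precisely to make $w$ a strict subsolution, which you would need rather than the bare fundamental solution. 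To repair your argument, replace the direct comparison with $\underline v$ by ``$u^{\varepsilon,r}\ge\underline u$ by comparison, and $\underline u\ge w\ge\underline v$ pointwise by construction of $\underline u$.''
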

\begin{proof}
	The lower bound of $u^{\varepsilon, r}$ holds since   $u^{\varepsilon, r}\ge \underline{u}$.
	
	{\textbf{Case 1:} $k>\frac{n}{2}$}
	
	 We can check that $\bar u:=\Big(\frac{|x|}{r_0}\Big)^{2-\frac{n}{k}}$ is a supersolution of the above approximating equation. Indeed, $\bar u$ is smooth in $\Omega_r$ and $S_k ({D^2 \bar u})=0$. \\
	On $\p B_r$, we have $$u^{\varepsilon,r}=\frac{1}{2}\Big(\frac{r}{R_0}\Big)^{2-\frac{n}{k}}+\frac{r^2}{2R_0^2}\le\Big(\frac{r}{R_0}\Big)^{2-\frac{n}{k}} <\Big(\frac{r}{r_0}\Big)^{2-\frac{n}{k}}.$$
	On $\p\Omega$, since $B_{r_0}\subset\subset \Omega$, we have
	$$u^{\varepsilon,r}=1<\Big(\frac{|x|}{r}\Big)^{2-\frac{k}{n}}=\bar u,$$ 
	where we use $2k>n$. Thus
	we have
	\begin{align*}
		\left\{\begin{aligned}
			S_k(D^2u^{\varepsilon,r})=\varepsilon
			>0=S_k\left(D^2\Big({ r_0^{ \frac{n-2k}{k} }{ |x|^{-\frac{n-2k}{k}} }}\Big)\right)\quad &\text{in}\ \Omega_{{r}}\\
			u^{\varepsilon,r}=1<{r_0^{\frac{n-2k}{k}}}{|x|^{-\frac{n-2k}{k}}} \quad &\text{on}\  \partial \Omega \ ,\\
			u^{\varepsilon, r} =\underline{u}<
			{r_0^{\frac{n-2k}{k}}}{{|x|} ^{-\frac{n-2k}{k}}}\quad  &\text{on} \ \partial B_{ {r}}.
		\end{aligned}
		\right.
	\end{align*}
	By maximum principal,
	we have $u^{\varepsilon,r}\le \bar u$ in $\overline\Omega_r$.

{\textbf{Case 2:} $k<\frac{n}{2}$}

One can check $\bar u=-|x|^{2-\frac{n}{k}}+r_0^{2-\frac{n}{k}}-1$ is  a supersolution. Indeed,
 we have
\begin{align*}
\left\{\begin{aligned}
S_k(D^2u^{\varepsilon,r})=\varepsilon
>0=S_k\left(D^2\Big({ -{ |x|^{2-\frac{n}{k}} }}\Big)\right)\quad &\text{in}\ \Omega_{{r}}\\
u^{\varepsilon,r}=-1<\bar u \quad &\text{on}\  \partial \Omega \ ,\\
 u^{\varepsilon, r} =\underline{u}<
 \bar u\quad  &\text{on} \ \partial B_{ {r}}.
\end{aligned}
\right.
\end{align*}
Applying the maximum principle in $\Omega_{ r}$, we have
\begin{align}\label{C0Esti2}
u^{\varepsilon,r}\le \bar u.
\end{align}


\textbf{Case 3:} $k=\frac{n}{2}$

Since ${u^{\varepsilon,r}}=\underline{u}\le \log|x|-\log r_0$ on $\p B_r$, $\underline{u}=0< \log|x|-\log r_0$ on $\p\Omega$ and $S_k(D^2 u^{\varepsilon,r})=\varepsilon>0=S_{k}(D^2(\log|x|))$, then we have $u^{\varepsilon,r}\le \log|x|-\log r_0$.
\end{proof}
\subsection{Gradient estimates}
In this subsection, we prove the global gradient estimate based on our key estimate in \cite{MaZhang2022arxiv}. If we further assume $\Omega$ is starshaped, we can prove the positive lower bound of the gradient  and thus the level set of the approximating solution is compact.

Motivated by B. Guan \cite{gb2007imrn} where he proved the gradient estimate for the complex Monge-Ampere equation, we proved the following gradient estimate for the $k$-Hessian equation in \cite{MaZhang2022arxiv}.
\begin{theorem}
{Let $U\subset \mathbb R^n$ be a domain,} $u{\in C^3(U)\cap C^1(\overline U)}$ be {a} solution of the $k$-Hessian equation $S_k(D^2 u)=f$ {in $U$} and $u<0$ if $k\le\frac{n}{2}$ and $u>0$ if $k> \frac{n}{2}$.    Denote by
\begin{align}\label{threecase}
P=
\left\{ {\begin{array}{*{20}c}
   {|Du|^2e^{2u}, \quad \quad  \ k=\frac{n}{2}  }, \\
   {|Du|^2 u^{\frac{2(n-k)}{2k-n}} , \ \quad\  k>\frac{n}{2} }, \\
   {|Du|^2 (-u)^{-\frac{2(n-k)}{n-2k}}, \ k<\frac{n}{2} }.  \\
\end{array}} \right.
\end{align}
then we have the following gradient estimate
\begin{align}\label{Gradientestimate}
\max\limits_{U} P\le
\left\{ {\begin{array}{*{20}c}
		{
			\max \left\{\max\limits_{U} (e^{2u} |D\log f|^2)
			, \max\limits_{\p U}P\right\},\quad \ k=\frac{n}{2}  }, \\
		{
			\max \left\{\left(\frac{2k-n}{k(n+1-k)}\right)^2\max\limits_{U} (u^{\frac{2k}{2k-n}} |D\log f|^2),
			 \max\limits_{\p U}P \right\},\quad \ k>\frac{n}{2}  },  \\
   {
   \max\left\{\left(\frac{n-2k}{k(n+1-k)}\right)^2\max\limits_{U} \left((-u)^{-\frac{2k}{n-2k}} |D\log f|^2\right)
   , \max\limits_{\p U}P \right\},\quad \ k<\frac{n}{2}  },
\end{array}} \right.
\end{align}
\end{theorem}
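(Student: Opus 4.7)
The plan is to prove this by the interior maximum principle applied to $Q := \log P$. If $Q$ attains its maximum on $\partial U$ there is nothing to prove, so assume the maximum is at an interior point $x_0 \in U$. At $x_0$, after rotating coordinates so that $\nabla u(x_0) = u_1 e_1$ and so that $D^2 u(x_0)$ is diagonal in the block orthogonal to $e_1$, the vanishing first-order conditions $Q_i(x_0) = 0$ force $u_{1j}(x_0) = 0$ for $j \geq 2$ as well, making $(u_{ij}(x_0))$ fully diagonal, and the $i = 1$ component yields the key identity
\begin{equation*}
u_{11}(x_0) = -\frac{\varphi'(u)}{\varphi(u)}\, u_1(x_0)^2,
\end{equation*}
where $\varphi(u)$ is the factor appearing in \eqref{threecase} for the case at hand ($\varphi = e^u$, $u^{(n-k)/(2k-n)}$, or $(-u)^{-(n-k)/(n-2k)}$).

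Next I would apply the second-order inequality $\sum_i F^{ii} Q_{ii}(x_0) \leq 0$, where $F^{ij} := \partial S_k/\partial u_{ij}$ is positive-definite on the $k$-admissible cone. Differentiating $S_k(D^2 u) = f$ in $x_1$ gives $\sum_i F^{ii} u_{1ii} = f_1$, which handles the third-derivative terms coming from $(\log|Du|^2)_{ii}$, while the Euler-type identity $\sum_i F^{ii} u_{ii} = k f$ disposes of the trace arising from $(2\log\varphi)_{ii}$. Substituting the first-order relation above to eliminate $u_{11}$, the inequality reduces to the schematic form
\begin{equation*}
\frac{2}{u_1^2}\sum_{i\geq 2} F^{ii} u_{ii}^2 + 2F^{11}\!\left[\Big(\frac{\varphi'}{\varphi}\Big)' - \Big(\frac{\varphi'}{\varphi}\Big)^2\right]\! u_1^2 + 2k\,\frac{\varphi'}{\varphi}\, f \leq -\frac{2f_1}{u_1}.
\end{equation*}

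The main obstacle, I expect, is arranging $\varphi$ so that the three terms on the left combine to give a useful lower bound in $u_1$. A short ODE calculation shows that the functions in \eqref{threecase} are precisely those for which $u\,\varphi'/\varphi$ takes a critical constant value (equal to $\pm(n-k)/k$ in the non-critical cases), which is exactly what is needed for the coefficient of $F^{11} u_1^2$ to have the correct sign and scale. One then exploits the $S_k^{1/k}$-concavity bound $\sum_i F^{ii} u_{ii}^2 \geq c_{n,k}\, f^{1+1/k}$ together with the Newton--Maclaurin inequality $S_{k-1}^{1/(k-1)} \geq c_{n,k}\, S_k^{1/k}$ to absorb the $F^{11}$ term into the Hessian sum. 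Finally, Young's inequality applied to $-2f_1/u_1$ produces the $|D\log f|^2$ contribution appearing in \eqref{Gradientestimate}, with the constants $\bigl(\frac{2k-n}{k(n+1-k)}\bigr)^2$ and $\bigl(\frac{n-2k}{k(n+1-k)}\bigr)^2$ emerging from the optimal balance between the Young constant and the Newton--Maclaurin constant. Unwinding $P(x_0) = u_1^2 \varphi(u)^2$ then delivers the stated bound.
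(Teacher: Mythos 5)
The paper does not actually contain a proof of this theorem: it is quoted from the authors' companion paper \cite{MaZhang2022arxiv} and used as an input to the $C^{1}$ estimate, so there is no in-text argument to compare against. I can therefore only assess your proposal on its own merits.

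Your set-up is sound. Working with $Q=\log P=\log|Du|^2+2\log\varphi(u)$, choosing coordinates so that $Du(x_0)=u_1e_1$ and the orthogonal block is diagonal, the first-order conditions $Q_j=0$ for $j\ge2$ do give $u_{1j}=0$, the condition $Q_1=0$ gives $u_{11}=-\frac{\varphi'}{\varphi}u_1^2$, and the schematic form you write for $\sum_iF^{ii}Q_{ii}\le0$ after using $\sum_iF^{ii}u_{1ii}=f_1$ and $\sum_iF^{ii}u_{ii}=kf$ is exactly what one gets. That much is correct and is the standard B.~Guan-style mechanism.

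The final paragraph, however, has real gaps. First, a small but symptomatic slip: for the weights in \eqref{threecase} one has $u\,\varphi'/\varphi=\frac{n-k}{2k-n}$ in the non-critical cases, \emph{not} $\pm(n-k)/k$; the role of $k$ in the target constant $\bigl(\frac{2k-n}{k(n+1-k)}\bigr)^2$ actually enters through $\sum_iF^{ii}u_{ii}=kS_k$ and Maclaurin, not through $u\varphi'/\varphi$. Second, and this is the crux, the coefficient $\bigl(\frac{\varphi'}{\varphi}\bigr)'-\bigl(\frac{\varphi'}{\varphi}\bigr)^2$ multiplying $F^{11}u_1^2$ equals $-\frac{k(n-k)}{(2k-n)^2u^2}<0$ for $n/2<k<n$, so the $F^{11}$ term in your inequality is a \emph{negative} contribution on the left. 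The statement that one ``absorbs the $F^{11}$ term into the Hessian sum'' glosses over the essential point: to close the argument one must show that the positive term $\frac{2}{u_1^2}\sum_{i\ge2}F^{ii}u_{ii}^2$ together with part of $\frac{2k\varphi'}{\varphi}f$ actually dominates $\frac{2k(n-k)}{(2k-n)^2u^2}F^{11}u_1^2$. Using the first-order relation $\lambda_1=u_{11}=-\frac{\varphi'}{\varphi}u_1^2$ this reduces to an algebraic inequality of the form $\frac{n-k}{k}\sum_{i\ge2}S_{k-1}(\lambda|i)\lambda_i^2\ge(-\lambda_1)\,S_k(\lambda|1)$ for $\lambda\in\Gamma_k$ with $\lambda_1<0$, which is a genuine, nontrivial inequality about elementary symmetric functions; you neither state nor prove it. Third, the claimed ``$S_k^{1/k}$-concavity bound $\sum_iF^{ii}u_{ii}^2\ge c_{n,k}f^{1+1/k}$'' does not follow from concavity of $S_k^{1/k}$ (which controls the linearization, not this quadratic form), and it is not clear it is true or useful here; what one really needs is the quantity $\sum_{i\ge2}F^{ii}u_{ii}^2$ together with the Maclaurin bound $(n-k+1)S_{k-1}\ge k(C_n^k)^{1/k}S_k^{1-1/k}$, which is where the factor $n+1-k$ in the final constant comes from. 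Finally, ``Young's inequality applied to $-2f_1/u_1$'' cannot by itself produce the specific constant $\bigl(\frac{2k-n}{k(n+1-k)}\bigr)^2$; in your schematic inequality the left-hand side contains terms with three different homogeneities in $u_1$, and the correct conclusion $u_1\le\frac{2k-n}{k(n+1-k)}|u|\,|D\log f|$ comes from a careful cancellation rather than from one Young step. So the plan identifies the right test function and the right structure, but the decisive estimates that pin down the constant (and in particular that neutralize the negative $F^{11}$ term) are missing.
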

Applying the above estimate in our setting i.e. we take $U=\Omega_r$ and $f=\varepsilon$, we get the following
\begin{proposition}
		Let $u^{\varepsilon,r}{\in C^3(\Omega_r)\cap C^1(\overline \Omega_r)}$ be {a} $k$-convex solution of the approximating equation \eqref{case1EquaAppr}, \eqref{case2EquaAppr} or \eqref{case3EquaAppr}.
	For sufficiently small $\varepsilon$ and $r$, we have
	\begin{align}
		\max_{\overline\Omega_r} P\le 	\max_{\p\Omega_r} P .
		\end{align}
	\end{proposition}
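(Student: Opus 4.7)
The plan is a direct invocation of the gradient estimate theorem stated just above, specialized to the approximating problem. We take $U = \Omega_r$ and $f \equiv \varepsilon$, a positive constant. The existence result of B. Guan \cite{guan1994cpde} gives $u^{\varepsilon,r} \in C^\infty(\overline{\Omega_r})$, so the regularity hypothesis $C^3(U)\cap C^1(\overline U)$ is automatic. For the sign hypothesis, the $C^0$ estimates of the previous subsection show that $u^{\varepsilon,r}>0$ when $k>\frac{n}{2}$ (from the lower bound $u^{\varepsilon,r}\ge \frac{1}{2}R_0^{n/k-2}|x|^{2-n/k}$) and that $u^{\varepsilon,r}<-1<0$ when $k<\frac{n}{2}$; in the borderline case $k=\frac{n}{2}$, the boundary values $u^{\varepsilon,r}=0$ on $\partial\Omega$ and $u^{\varepsilon,r}=\underline u\le 0$ on $\partial B_r$, together with the maximum principle applied to the $k$-subharmonic $u^{\varepsilon,r}$, force $u^{\varepsilon,r}\le 0$ throughout $\overline{\Omega_r}$.

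The crucial observation is that $f\equiv\varepsilon$ is constant, so $|D\log f|^2\equiv 0$ on $\Omega_r$. In each of the three branches of \eqref{Gradientestimate}, the first argument of the outer maximum is a constant multiple of $\max_U\bigl((\text{power of }\pm u^{\varepsilon,r})\,|D\log f|^2\bigr)$, and this vanishes identically. Consequently the theorem reduces to
\begin{equation*}
\max_{\overline{\Omega_r}} P \le \max\bigl\{\,0,\ \max_{\partial \Omega_r} P\,\bigr\}.
\end{equation*}
Since $P$ is $|Du^{\varepsilon,r}|^2$ times a nonnegative weight in $u^{\varepsilon,r}$ (namely $e^{2u}$, $u^{2(n-k)/(2k-n)}$, or $(-u)^{-2(n-k)/(n-2k)}$ respectively), it is nonnegative wherever defined, so in particular $\max_{\partial\Omega_r}P\ge 0$. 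The asserted bound $\max_{\overline{\Omega_r}}P\le \max_{\partial\Omega_r}P$ follows at once.

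There is essentially no obstacle here beyond the bookkeeping above; all the substantive analysis is carried by the gradient estimate proved in \cite{MaZhang2022arxiv}. The decisive feature of this corollary — that the resulting bound is independent of $\varepsilon$ and $r$ — comes for free, precisely because a constant right-hand side annihilates the only term in \eqref{Gradientestimate} that could have depended on them.
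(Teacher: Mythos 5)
Your proof is correct and is exactly the paper's argument: the paper's entire proof is the one sentence ``Applying the above estimate in our setting i.e. we take $U=\Omega_r$ and $f=\varepsilon$,'' and you have simply spelled out the routine verification that the hypotheses hold (regularity from Guan's existence theorem, the sign condition from the $C^0$ estimates and the maximum principle when $k=\frac{n}{2}$) and that the $|D\log f|$ term vanishes for constant $f$.

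One small refinement for the borderline case $k=\frac{n}{2}$: the cited theorem asks for $u<0$ strictly, whereas your maximum-principle argument gives $u\le 0$ with equality on $\partial\Omega$; the strong maximum principle (applicable since $u$ is subharmonic and non-constant) upgrades this to $u<0$ in the open set $\Omega_r$, which is the domain $U$ on which the hypothesis is actually imposed.
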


\begin{proposition}
	Let $u^{\varepsilon,r}{\in C^3(\Omega_r)\cap C^1(\overline \Omega_r)}$ be {a} $k$-convex solution of the approximating equation \eqref{case1EquaAppr}, \eqref{case2EquaAppr} or \eqref{case3EquaAppr}.
	For sufficiently small $\varepsilon$ and $r$, we have
	\begin{align}
		\max_{\overline\Omega_r} P\le 	 C .
	\end{align}
\end{proposition}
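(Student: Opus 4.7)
The plan is to invoke the preceding proposition, $\max_{\overline\Omega_r}P\le \max_{\partial\Omega_r}P$, and then to bound $P$ uniformly on the two components $\partial\Omega$ and $\partial B_r$ of $\partial\Omega_r$ by different barrier arguments.

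On $\partial\Omega$, the boundary value of $u^{\varepsilon,r}$ is the constant $1$, $-1$, or $0$, so the tangential gradient vanishes and only the inward normal derivative enters $P$. From $u^{\varepsilon,r}\ge \underline u$ in $\Omega_r$ with equality on $\partial\Omega$, the normal derivative of $u^{\varepsilon,r}$ is controlled from one side by that of $\underline u$, which by construction equals $K_0\Phi^0+c$ in a full neighborhood of $\partial\Omega$ and is therefore smooth with uniformly bounded gradient independent of $\varepsilon$ and $r$. The other side follows from either the supersolution $\bar u$ (Case 1) or an analogous comparison function; combined with the uniform $C^0$ bound on $u^{\varepsilon,r}|_{\partial\Omega}$, this yields $P\le C$ on $\partial\Omega$.

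The delicate estimate is on $\partial B_r$, where both the value and the gradient of $u^{\varepsilon,r}$ blow up at the rate dictated by the fundamental solution. The lower bound $|Du^{\varepsilon,r}|\ge |D\underline u|$ follows once more from $u^{\varepsilon,r}\ge\underline u$ in $\Omega_r$ with equality on $\partial B_r$. For the matching upper bound we introduce a radial barrier built from the fundamental solution, namely
\begin{equation*}
\Phi(x)=A\bigl(|x|^{(2k-n)/k}-r^{(2k-n)/k}\bigr)+\underline u(r)\quad(\text{Case 1}),
\end{equation*}
and analogously $\Phi(x)=A(-|x|^{(2k-n)/k}+r^{(2k-n)/k})+\underline u(r)$ in Case 2 and $\Phi(x)=A(\log|x|-\log r)+\underline u(r)$ in Case 3. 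In every case $\Phi$ lies in $\overline{\Gamma_k}$ with $S_k(D^2\Phi)=0<\varepsilon=S_k(D^2u^{\varepsilon,r})$, matches $u^{\varepsilon,r}$ on $\partial B_r$ by construction, and, once $A$ is chosen large but independently of $r$ and $\varepsilon$ using the explicit $C^0$ bounds, satisfies $\Phi\ge u^{\varepsilon,r}$ on $\partial\Omega$. The comparison principle then gives $\Phi\ge u^{\varepsilon,r}$ throughout $\Omega_r$, and evaluating the radial derivative at $\partial B_r$ yields $|\partial_r u^{\varepsilon,r}|\le |\partial_r\Phi|\asymp r^{(k-n)/k}$ in Cases 1 and 2, and $\asymp r^{-1}$ in Case 3.

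Plugging these gradient estimates together with the $C^0$ asymptotics $u^{\varepsilon,r}|_{\partial B_r}\asymp r^{(2k-n)/k}$, $-r^{-(n-2k)/k}$, or $\log r$ into the corresponding definition of $P$, a direct computation shows that the powers of $r$ cancel exactly, giving $P\le C$ on $\partial B_r$ uniformly in $\varepsilon$ and $r$. The main obstacle is the design of the three barriers so that the constant $A$ can be fixed independent of $r$ and $\varepsilon$; this is possible precisely because the radial fundamental solutions realize the prescribed asymptotic behaviour at the origin and the subsolution $\underline u$ together with the $C^0$ estimate supplies matching bounds at both boundary components of $\Omega_r$.
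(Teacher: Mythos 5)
Your argument is correct, and the barrier construction on $\partial B_r$ is genuinely different from the paper's. The paper rescales to the fixed annulus $B_2\setminus B_1$ via $\tilde u(y)=r^{n/k-2}u(ry)$ (resp. $\tilde u(y)=u(ry)$ when $k=n/2$), converts the problem into one with uniformly bounded data, and then uses a \emph{harmonic} function $\tilde h$ as an upper barrier in the annulus, reading off the gradient bound on $\partial B_1$ from $\tilde w\le\tilde u\le\tilde h$ and scaling back. You instead build the upper barrier directly in $\Omega_r$ out of the radial fundamental solution $\pm|x|^{(2k-n)/k}$ or $\log|x|$, pinned to the constant value $\underline u|_{\partial B_r}$ and lifted high enough on $\partial\Omega$ by choosing $A$ large. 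This avoids the rescaling step and exploits that the exact fundamental solution lies in $\overline{\Gamma_k}$ with $S_k=0$; the comparison principle then gives $\Phi\ge u^{\varepsilon,r}$ and the radial derivative of $\Phi$ at $|x|=r$ produces the same decay rate $|Du^{\varepsilon,r}|\le Cr^{(k-n)/k}$ (resp. $Cr^{-1}$). The lower bound on $|Du^{\varepsilon,r}|$ at $\partial B_r$ from $\underline u$ is the same in both proofs. Both routes are sound; yours is slightly more self-contained, while the paper's rescaling has the advantage of putting all annular estimates on a single fixed domain.

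One small imprecision worth flagging, though it does not affect the conclusion: on $\partial\Omega$ you suggest getting a \emph{lower} gradient bound from $\bar u$, but $\bar u$ does not equal $u^{\varepsilon,r}$ on $\partial\Omega$ (indeed $\bar u>1$ there), so the Hopf-type argument does not apply directly to that pair; the paper uses instead the harmonic function $h$ which \emph{does} match $u$ on $\partial\Omega$. For the present proposition this is immaterial: since $u^{\varepsilon,r}$ is the fixed constant $1$, $-1$, or $0$ on $\partial\Omega$, the weight factor in $P$ is identically $1$ there, and the upper bound $|Du^{\varepsilon,r}|\le|D\underline u|\le C$ from the subsolution alone already gives $P\le C$ on $\partial\Omega$. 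The lower bound on $|Du|$ is only needed later (for the starshaped estimate), and there the paper supplies the correct barrier.
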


\begin{proof}
We only need to prove {{{ boundary gradient estimates}}}.

\textbf{For simplicity, we use $u$ instead of $u^{\varepsilon, r}$ during the proof.}

We will construct upper barriers near $\partial \Omega$ and $\partial B_r$ respectively.

\emph{\textbf{Case 1: $k>\frac{n}{2}$}}

Let  $h\in C^{\infty}(\overline\Omega_{r_0})$ be the unique solution of
\begin{align*}
	\left\{\begin{aligned}
		\Delta h=0\quad &\text{in}\  \Omega_{r_0},\\
		h=1 \quad &\text{on}\ \partial \Omega,\\
		h=\frac 1 2
		\quad &\text{on}\ \partial B_{r_1},
	\end{aligned}
	\right.
\end{align*}
where $r_1=2^{-\frac{k}{2k-n}}r_0$.
By maximum principle and the $C^0$ estimate of $u$, $\underline{u}\le u\le h$ in $\overline\Omega_{r_0}$. Then for any $x\in\partial \Omega$
\begin{align*}
	0<c_0\le h_{\nu}\le {u}_{\nu}(x)\le \underline{u}_{\nu}(x)\le C,
\end{align*}
where  $\nu$ is the outward normal  of $\partial \Omega$. Then
\begin{align}\label{0714gradient1.1}
	0<c\le \max\limits_{\partial\Omega}|Du|=
	\max\limits_{\partial\Omega}(u_{\nu})\le  C.
\end{align}
This proves that $P$ is uniformly bounded on $\partial \Omega$.

Next we show $P$ is uniformly bounded on $\partial B_r$.
We consider  $\tilde u(y):=r^{\frac{n}{k}-2}u(x)$ and $\tilde {\underline{u}}(y):=r^{\frac{n}{k}-2}\underline{u}(x)$ for $y:=\frac{x}{r}\in B_2\setminus B_1$. $\tilde u$ satisfies
\begin{equation}
	\left\{
	\begin{aligned}
		S_k(D^2\tilde u)=r^n\varepsilon\quad&\text{in}\ B_2\setminus \bar{B_1},\\
		\tilde u=\tilde w\quad&\text{on}\ \p B_1,
	\end{aligned}
	\right.
\end{equation}
where $\tilde w(y)=r^{\frac n k-2}w(x)$ and recall $\underline u=w$ in $\Omega_r\subset\Omega_r$.

By the $C^0$ estimate of $u$, we have 
$${R_0}^{\frac{n}{k}-2}|y|^{2-\frac n k}\le \tilde u\le {r_0}^{\frac{n}{k}-2}|y|^{2-\frac n k}.$$
Then $\tilde u$ is uniformly bounded in $\overline{B_2\setminus B_1}$.
Let $\tilde h(y)$ be the smooth function solving 
\begin{equation}
	\left\{
	\begin{aligned}
		\Delta \tilde h=0\quad &\text{in}\  B_{2}\setminus B_{1},\\
		\tilde h=\tilde w=\frac{R_0^{\frac n k-2}}{2}+\frac{a_0}{2R_0^2}r^{\frac{n}{k}} \quad &\text{on}\ \p B_1,\\
		\tilde h={r_0}^{\frac n k-2}2^{\frac{n}{k}-2}
		\quad &\text{on}\ \p B_{2}.
	\end{aligned}
	\right.
\end{equation}
Then $\tilde h$  is uniformly $C^2$ bounded in $\overline{B_2\setminus B_1}$.
By maximum principal, we have
\begin{align}
	\tilde{w}\le \tilde u\le \tilde h.
\end{align}
Then 
\begin{align}
	\tilde{w}_{\nu}\le \tilde u_{\nu}\le {\tilde h}_{\nu}\le C \ \text{on}\ \p B_1.
\end{align}
Note that on $\p B_1$, we have $$\tilde{w}_{\nu}= r^{\frac n k-1}w_{x_i}y_i>(1-\frac{n}{2k})R_0^{2-\frac{n}{k}} >0. $$
where we use $k>\frac{n}{2}$.
Thus we have 
$$ c\le |D\tilde u|\le C\ \text{on}\ \p B_1.$$
Therefore, we get
$$ c|x|^{1-\frac{n}{k}}\le |D u|\le C|x|^{1-\frac{n}{k}}\ \text{on}\ \p B_r.$$
This implies $P$ is uniformly bounded on $\partial B_r$.

In conclusion, when $k>\frac{n}{2}$, $P$ is uniformly bounded in $\overline\Omega_r$.

\emph{\textbf{Case2: $k<\frac{n}{2}$}}\\
The gradient estimate on $\p\Omega$ is similar as case 1. We only  prove the gradient estimate on $\p B_1$.
 We consider  $\tilde u(y):=r^{\frac{n}{k}-2}u(x)$ and $\tilde {w}(y):=r^{\frac{n}{k}-2}w(x)$ for $y:=\frac{x}{r}\in B_2\setminus B_1$. $\tilde u$ satisfies
\begin{equation}
	\left\{
	\begin{aligned}
		S_k(D^2\tilde u)=r^n\varepsilon\quad&\text{in}\ B_2\setminus \bar{B_1},\\
		\tilde u=\tilde w \quad&\text{on}\ \p B_1.
		\end{aligned}
	\right.
	\end{equation}
 By the $C^0$ estimate of $u$ and assuming $r$ is small enough, we have 
$$\frac12|y|^{2-\frac n k}\le -\tilde u\le 2|y|^{2-\frac n k}.$$
Then $\tilde u$ is uniformly bounded in $\overline{B_2\setminus B_1}$.
Let $\tilde h(y)$ be the smooth function solving 
\begin{equation}
	\left\{
	\begin{aligned}
		\Delta \tilde h=0\quad &\text{in}\  B_{2}\setminus B_{1},\\
		\tilde h=\tilde w \quad &\text{on}\ \p B_1,\\
		\tilde h=-\frac{1}{2}2^{2-\frac{n}{k}}
		\quad &\text{on}\ \p B_{2}.
			\end{aligned}
	\right.
	\end{equation}
Then $\tilde h$  is uniformly $C^2$ bounded in $\overline{B_2\setminus B_1}$.
By maximum principal, we have
\begin{align}
 \tilde{w}\le \tilde u\le \tilde h.
	\end{align}
Then 
\begin{align}
 \tilde{w}_{\nu}\le \tilde u_{\nu}\le \tilde h,
\end{align}
where  $\nu(y)=y$ is the outward normal to $\p B_1$. Note that  $$\tilde{w}_{\nu}=\frac{n}{k}-2+\frac{a_0}{R_0^2}r^{\frac{n}{k}}>\frac{n}{k}-2>0 \ \text{on}\ \p B_1,$$
where we choose $r$ small enough and use $k<\frac{n}{2}$.
Thus we have 
\begin{align} c\le |D\tilde u|\le C\ \text{on}\ \p B_1.\end{align}
Therefore, we get
$$ c|x|^{1-\frac{n}{k}}\le |D u|\le C|x|^{1-\frac{n}{k}}\ \text{on}\ \p B_r.$$
Thus $P$ is uniformly bounded on $\partial B_r$.

 In conclusion, when $k<\frac{n}{2}$, $P$ is uniformly bounded in $\overline\Omega_r$.

  \emph{\textbf{Case 3: $k=\frac{n}{2}$}}\\
  The gradient estimate on $\p\Omega$ is similar as case 1.  We will prove the gradient estimate on $\p B_1$. Define $\tilde u(y)=u(x)$ with $y=\frac{x}{r}\in\overline B_2\setminus B_1$, we have
 \begin{equation}
  	\left\{
  	\begin{aligned}
  		S_k(D^2\tilde u)=r^{n}\varepsilon\quad&\text{in}\ B_2\setminus \bar{B_1},\\
  		\tilde u=\log r \quad&\text{on}\ \p B_1.
  	\end{aligned}
  	\right.
  \end{equation}
  By the $C^0$ estimate of $u$:
  $$\log|y|-\log R_0\le \tilde u-\log r\le \log|y|-\log r_0.$$
  
  Let $\bar h(y)$ be the smooth function solving 
  \begin{equation}
  	\left\{
  	\begin{aligned}
  		\Delta \tilde h=0\quad &\text{in}\  B_{2}\setminus B_{1},\\
  		\tilde h=\tilde w \quad &\text{on}\ \p B_1,\\
  		\tilde h=\log r+\log\frac{2}{r_0}
  		\quad &\text{on}\ \p B_{2}.
  	\end{aligned}
  	\right.
  \end{equation}
 We have $|D\tilde h|\le C$ in $\overline{B_2\setminus B_1}$. By comparison, we have $\tilde w\le \tilde u\le \tilde h$. Recall $\tilde w= \tilde u= \tilde h$ on $\p B_1$, we get
 \begin{align}
 0<c\le \tilde w_{\nu}	\le \tilde u_{\nu}\le \tilde h_{\nu}\le C\quad\text{on} \  \p B_1,
 	\end{align}
 where $c$ and $C$ are uniform positive constants.
 Then we have
 \begin{align}
 	c\le |D\tilde u|=\tilde u_{\nu}\le C \quad\text{on} \  \p B_1
 	\end{align}
 Therefore 
  \begin{align}
 	cr^{-1}\le |D u|=\tilde u_{\nu}\le C r^{-1}\quad\text{on} \  \p B_r
 \end{align}
Thus $P$ is uniformly bounded on $\Omega_r$.
 \end{proof}
 \subsubsection{\textbf{Positive lower bound of $|Du|$ when $\Omega$ is strictly $(k-1)$ convex and starshaped.}}

 \begin{lemma}
 	Let $\Omega$ be strictly $(k-1)$ convex and starshaped.
 	Let $u$ be the $k$-convex solution of the approximating equation \eqref{case1EquaAppr}, \eqref{case2EquaAppr} or \eqref{case3EquaAppr}.
 For sufficiently small $\varepsilon$ and $r$, there exists a uniform constant $c_0$ such that for any $x\in \overline\Omega_r$
 	\begin{align}\label{0714gl1}
 		x\cdot Du(x)\ge
 		&c_0 |x|^{2-\frac{n}{k}}.
 	\end{align}
 	In particular,
 	\begin{align}\label{0715gl1}
 		| Du(x)|\ge c_0|x|^{1-\frac{n}{k}}.
 	\end{align}
 \end{lemma}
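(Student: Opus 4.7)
The plan is to analyze the auxiliary function $\Phi(x) := x\cdot Du(x)$ through the linearized operator $L := F^{ij}[u]\p_i\p_j$, where $F^{ij} = \p S_k/\p u_{ij}$. Differentiating $S_k(D^2u)=\varepsilon$ along the radial vector field $x^i\p_i$ gives $F^{ij}x^k u_{ijk}=0$; combining this with Euler's identity $F^{ij}u_{ij}=k\varepsilon$ and the formula $\Phi_{ij}=2u_{ij}+x^k u_{ijk}$ yields
\begin{equation*}
F^{ij}\Phi_{ij}=2k\varepsilon,
\end{equation*}
so $\Phi$ is an $L$-subsolution.

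Next I would establish the lower bound of $\Phi$ on $\p\Omega_r$. On $\p\Omega$ the boundary datum of $u$ is constant, hence $Du = u_\nu\nu$ and $\Phi=(x\cdot\nu)u_\nu$; the starshapedness gives $x\cdot\nu\ge c_\Omega>0$, and the boundary gradient estimate established in the previous subsection gives $u_\nu\ge c>0$, so together with the boundedness of $|x|^{2-n/k}$ on $\p\Omega$ one obtains $\Phi\ge c_0|x|^{2-n/k}$ there. On $\p B_r$ the boundary datum $\underline{u}|_{\p B_r}$ depends only on $|x|$, so $Du$ is purely radial; the rescaled comparison $\tilde u(y)=r^{\frac{n}{k}-2}u(ry)$ already used in the proof of the gradient estimate shows $|D\tilde u|\ge\tilde c$ on $\p B_1$, which rescales back to $|Du|\ge\tilde c\,r^{1-n/k}$ and hence $\Phi=|x||Du|\ge\tilde c\,r^{2-n/k}$ on $\p B_r$. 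Thus $\Phi\ge c_0|x|^{2-n/k}$ on all of $\p\Omega_r$ for a uniform $c_0>0$.

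To propagate the bound into the interior I would compare $\Phi$ with the model barrier $\Psi(x):=c_0|x|^{2-n/k}$ (with analogous choices $c_0\log|x|$ or $-c_0|x|^{2-n/k}$ in the cases $k=\tfrac{n}{2}$ and $k<\tfrac{n}{2}$). Since $S_k(D^2\Psi)=0$ one has $F^{ij}[\Psi]\Psi_{ij}=0$ identically; after a mild perturbation of the exponent (e.g.\ replacing the exponent by $2-\tfrac{n}{k}+\delta$ with small $\delta>0$) the radial decomposition in the orthonormal frame aligned with $x$ produces a strictly positive contribution of order $\delta|x|^{-n+\delta}$, which persists when $F[\Psi]$ is replaced by $F[u]$ thanks to the pointwise $C^{1,1}$-type estimates of Theorems~\ref{apu20720}--\ref{apu30720}. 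For $\varepsilon$ and $r$ sufficiently small this makes $F^{ij}[u]\Psi_{ij}\ge 2k\varepsilon$, so that $\Phi-\Psi$ is an $L$-supersolution; the minimum principle combined with the boundary bound forces $\Phi\ge\Psi$ throughout $\Omega_r$, and $|Du|\ge c_0|x|^{1-n/k}$ follows from $|Du|\ge|x|^{-1}\Phi$. The main obstacle is precisely this interior step: quantifying that $F^{ij}[u]\Psi_{ij}$ exceeds $2k\varepsilon$ uniformly in $\varepsilon,r$, which relies on both the algebraic identity $F^{ij}[\Psi]\Psi_{ij}=0$ (from the radial decomposition of $D^2\Psi$) and the quantitative closeness of $F[u]$ to $F[\Psi]$ coming from the Hessian asymptotics.
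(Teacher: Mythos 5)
Your boundary analysis and the computation $F^{ij}\Phi_{ij}=2k\varepsilon$ (with $F^{ij}=\partial S_k/\partial u_{ij}$) both match the paper, but the interior propagation step has a genuine gap. You want to compare $\Phi=x\cdot Du$ with a barrier $\Psi=c_0|x|^{2-n/k}$ and argue that $F^{ij}[u]\Psi_{ij}\geq 2k\varepsilon$. The identity $F^{ij}[\Psi]\Psi_{ij}=0$ is purely algebraic and lives at $\Psi$, but what enters the maximum principle is $F^{ij}[u]\Psi_{ij}$, computed with the linearized coefficients of $u$. These two quantities have no a priori sign relation: $D^2\Psi$ has $n-1$ positive tangential eigenvalues and one negative radial eigenvalue (of comparable magnitude), so $F^{ij}[u]\Psi_{ij}$ can be of either sign depending on how the eigenframe of $D^2u$ aligns with the radial direction. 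The pointwise $C^{1,1}$ bounds $|D^2u|\leq C|x|^{-n/k}$ control only the operator norm of $D^2u$ — not a lower eigenvalue bound and not the eigenvector alignment — so they cannot force $F^{ij}[u]\Psi_{ij}$ to have the right sign, and the $\delta$-perturbation of the exponent does not repair this because the issue is the mismatch between $F[u]$ and $F[\Psi]$, not the vanishing of $S_k(D^2\Psi)$.

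The paper avoids this obstruction by choosing barriers whose action under $F^{ij}[u]$ is computable without any eigenframe information: it sets $H=x\cdot Du-b_1 u-b_2\frac{|x|^2}{2}$ (with a sign change on the $u$-term when $k<n/2$, and a constant instead of $u$ when $k=n/2$), and uses the exact identities $F^{ij}u_{ij}=k$ and $F^{ij}\delta_{ij}=\mathcal{F}=(n-k+1)S_{k-1}/S_k$ (here $F^{ij}=\partial(\log S_k)/\partial u_{ij}$). The Maclaurin inequality gives $\mathcal{F}\geq k\varepsilon^{-1/k}$, so $F^{ij}H_{ij}=(2-b_1)k-b_2\mathcal{F}<0$ once $\varepsilon$ is small enough; the sign is forced by the divergence of $\varepsilon^{-1/k}$, not by any structure of $D^2u$. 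Combined with the same boundary bound you derived and the $C^0$ estimate $u\geq\tfrac12 R_0^{n/k-2}|x|^{2-n/k}$ (resp.\ its analogues), the maximum principle yields $\Phi>b_1u+b_2|x|^2/2\geq c_0|x|^{2-n/k}$, completing the argument. In short: replace your fundamental-solution barrier with $b_1 u+b_2\frac{|x|^2}{2}$, which trades the problematic quantity $F^{ij}[u]\Psi_{ij}$ for the trivially estimated $kb_1+b_2\mathcal{F}$.
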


 \begin{proof}
 	Recall $F^{ij}=\frac{\partial }{\partial u_{ij}}( \log S_k(D^2 u))$.
 	By Maclaurin inequality, we have \begin{align}\label{0817:lg1}\mathcal{F}=(n-k+1)\frac{S_{k-1}}{S_{k}}\ge k(C_n^k)^{\frac{1}{k}}S_k^{-\frac 1k}\ge k\varepsilon^{-\frac 1 k}.
 		\end{align}
 	
 	We first prove the positive lower bound of $x\cdot Du(x)$ on $\p\Omega_r$.
 	In fact,  since $|Du|\ge c$ on $\p\Omega$ and $\Omega$ is starshaped, we have
 	\begin{align}
 		x\cdot Du=x\cdot\nu |Du|\ge c\min_{\p\Omega}	x\cdot\nu:=c_{1}>0.
 	\end{align} 
 	On $\p B_r$, since $Du=|Du|\nu=|Du|\frac{x}{r}$, we have
 	\begin{align}
 		x\cdot Du=r|Du|\ge cr^{2-\frac{n}{k}}.
 	\end{align}
 	
 	Then for any $x\in\p\Omega_r$, we have 
 	\begin{align}\label{0815:g1}
 		x\cdot Du\ge c_0|x|^{2-\frac{n}{k}}.
 	\end{align}
 	
 	\emph{Case 1: $k< \frac{n}{2}$}
 	
 	We consider the function $H:=x\cdot Du(x)-b_{11} u-b_{12}\frac{|x|^2}{2}$ with $b_{11}=\frac{c_0}{2}r_0^{2-\frac n k}$ and $b_{12}=\frac{c_0}{4R_0^{\frac n k}}$.
 	
 Since	$u\le r_0^{\frac{n}{k}-2}|x|^{2-\frac{n}{k}}$, by \eqref{0815:g1}, we have 
 $$H\ge \frac{c_0}{2}|x|^{2-\frac n k}-b_{11}u+ \frac{c_0}{2}|x|^{2-\frac n k}-b_{12}\frac{|x|^2}{2}>0\quad\text{on} \ \p\Omega_r.$$ 
 	On the other hand, we have
 	\begin{align}
 		F^{ij}H_{ij}=(2-b_{11})k-b_{12}\mathcal{F}\notag\\
 		\le   2k-b_{12}k\varepsilon^{-\frac{1}{k}}<0,	\end{align}
 	assume $\varepsilon\in (0,\varepsilon_0)$ with $\varepsilon_0\le \Big(\frac{b_{12}}{2}\Big)^{k}$.
 	 
 	 By maximum principle, $$H\ge \min_{\partial \Omega_r} H>0.$$ 
 	 
 	\emph{Case 2: $k< \frac{n}{2}$.}
 	Consider the function $H:=x\cdot Du(x)+b_{21}a_1 u-\frac{b_{12}}{2}|x|^2$. Our goal is to show $H$ is positive in $\overline\Omega_r$. Indeed,
 	By \eqref{0815:g1} and
 	$ -u\le C|x|^{2-\frac{n}{k}}$, for $b_{21}:=\frac{1}{2}C^{-1}a_0$ and $b_{22}=\frac{c_0}{2R_0^{\frac{n}{k}}}$, for any $x\in \p\Omega_r$, we have
 	\begin{align}\label{0815:g21}
 	H\ge& \frac{1}{2}x\cdot Du-\frac{b_{22}}{2}|x|^2\notag\\
 	\ge& \frac{1}{2}|x|^{2-\frac{n}{k}}(c_0-b_{22}R_0^{\frac{n}{k}})\notag\\
 	\ge&\frac{c_0}{4}|x|^{2-\frac{n}{k}}>0\quad\text{on}\ \p\Omega_r.
 	\end{align}
 On the other hand, we have
 	\begin{align}
 		F^{ij}H_{ij}=(2+b_{11}a_1)k-b_{12}\mathcal{F}\notag\\
 		\le   (2+b_{21})k-b_{22}k\varepsilon^{-\frac{1}{k}}<0,	\end{align}
where we use \eqref{0817:lg1} and we assume $\varepsilon\in (0,\varepsilon_0)$ with $\varepsilon_0\le \Big(\frac{b_{22}}{2(1+b_{21})}\Big)^{k}$ . 
By maximum principle, $$H\ge \min_{\partial \Omega_r} H>0.$$ 
%
In conclusion, we prove $H>0$ in $\overline \Omega_r$ and thus  \eqref{0714gl1} is obtained.

%

By maximum principle, we have
$H>\min_{\partial \Omega_r} H>0$. 

\emph{Case 3: $k=\frac{n}{2}$}

We consider $H=x\cdot Du(x)-b_{31}-b_{32}\frac{|x|^2}{2}$ which is positive on the boundary of $\overline \Omega_r$ if we take $b_{31}$ and $b_{32}$ small enough. Since $F^{ij}H_{ij}\le k\varepsilon^{-\frac{1}{k}}(\varepsilon^{\frac{1}{k}}-b_{32})<0$ for $\varepsilon$ small enough, we have $H=x\cdot Du(x)-b_{31}-b_{32}\frac{|x|^2}{2}>0$ in $\overline \Omega_r$ and we can get the desired estimate.

%
 	\end{proof}

\subsection{Second order estimates}
By the uniform gradient estimate, we have proved that $P$ is uniformly bounded in $\overline\Omega_r$. 
We will prove the second order estimate of the approximating equations based on the following second order estimate in \cite{MaZhang2022arxiv} by the second author and the third author.

\subsubsection{\textbf{The global second order estimate can be reduced to the boundary second order estimate} }

\begin{theorem}
Let $u\in C^4(\Omega_r)\cap C^2(\overline\Omega_r)$ be a $k$-convex solution of \eqref{case1EquaAppr} or \eqref{case2EquaAppr} or \eqref{case3EquaAppr}. Define 
$
 G=u_{\xi\xi}\varphi(P)h(u)
$,
then we have
\begin{align}\label{Secondorderestimate}
\max\limits_{\Omega_r}  G\le C+\max\limits_{\partial\Omega_r}  G.
\end{align}
where  
$h$ is defined by 
\begin{align}
	h(u)=
	\left\{ {\begin{array}{*{20}c}
			{ u^{\frac{n}{2k-n}}, \ \ \quad k>\frac{n}{2} }, \notag\\
			{(-u)^{-\frac{n}{n-2k}}, \ k<\frac{n}{2} }. \notag \\
			{e^{2u}, \quad  \quad\ k=\frac{n}{2}  }, \notag\\
	\end{array}} \right.
\end{align} and $\varphi$ is defined by
\begin{align}
	\varphi(t)=\left\{
	\begin{aligned}
		(M-t)^{-\tau}, k<n,\\
		1,           \qquad k=n,
		\end{aligned}
	\right.
	\end{align}
where $M:=2\max P+1$, $\tau$ is a uniform positive constant
\end{theorem}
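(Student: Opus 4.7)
The plan is to prove this via an interior maximum principle argument applied to the auxiliary function $G = u_{\xi\xi}\varphi(P)h(u)$, where $\xi$ is a unit direction, and then reduce to boundary control. We may assume the interior maximum of $G$ over all $(x,\xi) \in \Omega_r \times \mathbb{S}^{n-1}$ is attained at some interior point $x_0$ (otherwise the conclusion is immediate). By a rotation we arrange $\xi = e_1$ at $x_0$ and simultaneously diagonalize $D^2u(x_0)$ so that $u_{11} \geq u_{22} \geq \cdots \geq u_{nn}$; then near $x_0$ we work with $\tilde G = \log u_{11} + \log \varphi(P) + \log h(u)$, which still achieves a local max at $x_0$.

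The core computation is to apply the linearized operator $F^{ij} = \partial \log S_k / \partial u_{ij}$ (diagonal at $x_0$) to $\tilde G$ and use $F^{ij}\tilde G_{ij} \leq 0$ together with $\tilde G_i = 0$. Four ingredients combine:
\begin{enumerate}[(a)]
\item The concavity of $\log S_k^{1/k}$ gives the standard inequality controlling $F^{ij} (\log u_{11})_{ij}$ in terms of $-\sum_i F^{ii}(u_{11i})^2/u_{11}^2$ plus error terms of order $F^{ii}/u_{11}$, using $(\log S_k)_{11} = 0$ since the right-hand side is the constant $\varepsilon$.
\item For $\log \varphi(P)$, the key point is that the gradient estimate from the previous subsection ensures $P \leq C$ uniformly, so $M - P \geq 1$ and $\varphi$ and its derivatives are uniformly bounded; expanding $P_i$ and $P_{ij}$ contributes a definite-sign Pogorelov-type term involving $\sum_i F^{ii} u_{1i}^2$ (which equals $F^{11} u_{11}^2$ after diagonalization) when $\tau$ is chosen appropriately.
\item For $\log h(u)$, the exponent is tuned in each of the three cases ($k > n/2$, $k < n/2$, $k = n/2$) so that $F^{ij}(\log h(u))_{ij}$ produces a term proportional to $h'(u)/h(u) \cdot F^{ij}u_{ij} = k h'/h$ balanced against a quadratic term in $|Du|^2$ that is again absorbed by the bound on $P$.
\item At the critical point $\tilde G_i = 0$ lets us replace $u_{11i}/u_{11}$ by controlled combinations of $P_i/(M-P)$ and $h'(u)u_i/h(u)$, which, together with $P \leq C$, yields an a priori bound on $u_{11}$ at $x_0$.
\end{enumerate}

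Putting (a)--(d) together, the inequality $F^{ij}\tilde G_{ij} \leq 0$ forces $u_{11}(x_0) \leq C$, whence $G(x_0) \leq C$ with $C$ independent of $\varepsilon$ and $r$ (this is precisely where one uses $\varphi, h$ bounded by the uniform $C^0$ and gradient estimates on $\overline\Omega_r$ established in the preceding subsections). Comparing this interior bound with the value of $G$ on $\partial \Omega_r$ yields \eqref{Secondorderestimate}.

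The main obstacle is the simultaneous choice of the parameter $\tau$ in $\varphi$ and the particular exponent in $h(u)$: $\tau$ must be large enough that the Pogorelov term from (b) dominates the bad third-order terms from (a) through the Cauchy--Schwarz estimate $|u_{11i}|^2/u_{11}^2 \leq (1+\eta) (\log \varphi(P))_i^2 / \tau + \cdots$, yet small enough that the cross terms involving $F^{ii}u_i^2$ coming from differentiating $P = |Du|^2 h_1(u)$ do not overwhelm the single-sign Pogorelov contribution. This is exactly the computational balance carried out in \cite{MaZhang2022arxiv}, and the only new point is to check that the right-hand side $\varepsilon$ (rather than a positive function bounded below) does not enter unfavorably: since $\log S_k$ has constant right-hand side, all derivatives of the right-hand side vanish, so the estimate is in fact cleaner than the general version and the constant $C$ is genuinely $\varepsilon$-independent.
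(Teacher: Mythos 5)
The paper does not reprove this theorem; it cites the Pogorelov-type interior second-order estimate directly from \cite{MaZhang2022arxiv}, so there is no in-paper argument to compare against. Your outline is the right strategy for proving it: apply $F^{ij}=\partial\log S_k/\partial u_{ij}$ to $\tilde G=\log u_{\xi\xi}+\log\varphi(P)+\log h(u)$ at an interior maximum, use concavity of $\log S_k$ and the vanishing of $D\log f$, $D^2\log f$ for the constant right-hand side $\varepsilon$, harvest the Pogorelov-type positive term from differentiating $\varphi$ (legitimate because the gradient bound gives $M-P\geq 1$, so $\varphi$ is bounded both ways), and tune $h$ per regime.

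The gap is the concluding step. The inequality $F^{ij}\tilde G_{ij}(x_0)\leq 0$ cannot force the raw bound $u_{11}(x_0)\leq C$ with $C$ independent of $\varepsilon$, $r$ and the position of $x_0$: comparing against the model $u\sim|x|^{(2k-n)/k}$ (take $k>n/2$) shows that $G$ is essentially constant across the annulus while the top Hessian eigenvalue scales like $|x|^{-n/k}$, so if $x_0$ lies near $\partial B_r$ then $u_{11}(x_0)$ blows up as $r\to 0$ and no uniform raw bound is possible. What the computation actually delivers is a \emph{weighted} bound, in effect $u_{11}(x_0)\,h(u(x_0))\leq C$, equivalently $G(x_0)\leq C$ directly once one observes $\varphi(P)\leq 1$. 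The weight $h(u)$ is built into the auxiliary function precisely to produce this: at the critical point the cancellations in your step (d) control $G$, not $u_{11}$. The claim ``$u_{11}(x_0)\leq C$'' only happens to yield the stated display \eqref{Secondorderestimate} because $\varphi,h\leq 1$; as an intermediate step it is false and would misdirect an attempt to carry out the balance of $\tau$ against the exponent in $h$.
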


\subsubsection{\textbf{Second order estimate on the boundary $\partial \Omega_r$}}.
The second order estimate on $\p\Omega$ is the same as \cite{CNSIII} (see also \cite{MaZhang2022arxiv}).
Here we only need prove the second order estimate on $\p B_r$.

\emph{\textbf{Tangential second derivatives estimates}}\\


For any $x_0\in \p B_r$, we choose the coordinate such that $x_0=(0,\cdots,
 0, r)$, then near $x_0$, $\p B_r$ is locally represented by $x_n=(r^2-|x'|^2)^{\frac{1}{2}}$ and $\frac{\p ^2 x_n}{\p x_{\alpha}\p x_{\beta}}(x_0)=r^{-1}\delta_{\alpha\beta}$ with $1\le \alpha,\beta\le n-1$.\\
 Since $u|_{\partial B_r}=constant$, we have
 \begin{align}\label{4627141}
 u_{\alpha\beta}(x_0)=&-u_n(x_0)\frac{\p ^2 x_n}{\p x_{\alpha}\p x_{\beta}}(x_0)=r^{-1}u_n(x_0)\delta_{\alpha\beta}\notag\\
 =&r^{-1} u_{\nu}(x_0)\delta_{\alpha\beta}.
 \end{align}

 Since we have the boundary gradient estimate on $\partial B_r$,
 \begin{align*}
Cr^{-\frac{n-k}{k}}\ge u_{\nu}(x)\ge cr^{-\frac{n-k}{k}},
\end{align*}
then by \eqref{4627141}, we have
\begin{align}
|u_{\alpha\beta}(x_0)|\le& Cr^{-\frac{n}{k}}\\
\{u_{\alpha\beta}(x_0)\}\ge& c r^{-\frac{n}{k}} \{\delta_{\alpha\beta}\}\label{0823:1}.
\end{align}

\emph{\textbf{ { Tangential-normal derivative estimates $\partial \Omega_r$}}}

 For any $x_0\in \partial B_r$,
choose the coordinate such that $x_0=(0,\cdots,0,r)$, $\partial B_r\cap B_{\frac{1}{2} r}(x_0)$ is represented by
\begin{align*}
	x_n=\rho(x')=(r^2-|x'|^2)^{\frac{1}{2}},
\end{align*}
Consider the tangential operator $T_{\alpha}=\left(x_\alpha \partial_{n}-x_n \partial_{\alpha}\right)$,$1\le \alpha\le n-1$. Since $u(x',\rho(x'))$ is constant, we have
\begin{align*}
	0=&	u_{\alpha}+u_{n}\rho_{\alpha}
	=u_{\alpha}-x_{\alpha}\rho^{-1}u_n
\end{align*}
Then {on} $\partial B_r\cap B_{\frac{r}{2}}(x_0)$, we have
\begin{align*}
	T_{\alpha} u=x_\alpha u_{n}-\rho u_{\alpha}=0.
\end{align*}

We consider the function
\begin{align*}
w=	A	_1(1-r^{-1}x_n)\pm r^{\frac{n-2k}{k}}T_{\alpha } u \ \text{in}\ B_r\cap B_{\frac{r}{2}}(x_0),
\end{align*}
where $A_1$ is positive large constant.
Since $x_0=(0\cdots,0, r)$ and $T_{\alpha}u=0$ on $\partial B_r$, we have $w(x_0)=0$. 
Since $T_\alpha u=0$ on ${\partial B_{r}\cap B_{\frac r 2}(x_0)}$, we have $w|_{\p B_{r}\cap B_{\frac r2}(x_0)}\ge 0$. \\
Since on $B_r\cap B_{\frac{r}{2}(x_0)}$, $r^{\frac{n-2k}{k}}|T_{\alpha} u|\le C_1r^{\frac{n-2k}{k}}|x||Du|\le C$ and $x_n\le\frac{7r}{8}$,  choosing $A_1>16C$, we have \begin{align}
	w\ge \frac18 A_1-C>C>0 \quad \text{on}\quad {B_1\cap   \partial B_{\frac{1}{2}r}(y_0)}.
	\end{align}

 Observe that $F^{ij}w_{ij}=\pm F^{ij}T_{\alpha}u=T_{\alpha}(F^{ij}u_{ij})=0$.
	By maximum principle, $w$ attains its minimum $0$ at $x_0$. Then we have
	\begin{align*}
		0\ge w_n(x_0)=-A_1r^{-1}\pm r^{\frac{n-k}{k}} u_{\alpha n}(x_0).
	\end{align*}
Then
$|u_{\alpha n}(x_0)|\le A_1 r^{-1}|Du(x_0)|\le Cr^{-\frac nk}$ and thus we have the uniform tangential-normal derivative estimates on $\partial B_R$.

\emph{\textbf{Double normal derivative estimates $\partial \Omega_r$}}\\
We can choose the coordinate at $x_0$ such that $u_{n}(x_0)=|Du|$ and $\{u_{\alpha\beta}(x_0)\}_{1\le \alpha,\beta\le n-1}$ is diagonal.

For any $x_0\in \partial B_r$, by \eqref{0823:1}, we have
\begin{align*}
	u_{nn}c_0r^{-\frac{n(k-1)}{k}}\le u_{nn}(x_0)S_{k-1}(u_{\alpha\beta}(x_0))=&S_k(D^2 u(x_0))-S_{k}(u_{\alpha\beta}(x_0))
	+\sum_{i=1}^{n-1}u_{in}^2S_{k-2}(u_{\alpha\beta})\\
	\le& \varepsilon+Cr^{-n}\le 2C r^{-n}.
\end{align*}
This gives $u_{nn}\le C r^{-\frac{n}{k}}$. On the other hand,  $u_{nn}\ge -\sum\limits_{i=1}^{n-1}u_{ii}\ge -cr^{-\frac{n}{k}}$.
Then we have $|u_{nn}(x_0)|\le Cr^{-\frac{n}{k}}$.



In conclusion, we obtain $|D^2 u(x)|\le C|x|^{-\frac{n}{k}}$ on the boundary $\partial\Omega_r $ and thus $|D^2u|(x)\le C|x|^{-\frac nk}$ for any $x\in \overline\Omega_r$.

\section{Proof of Theorem \ref{main07201}, Theorem \ref{main07202} and Theorem \ref{main07203}}
\subsection{Uniqueness}
The uniqueness follows from the comparison principle for $k$-convex solutions of the $k$-Hessian equation in bounded domains by Wang-Trudinger\cite{trudingerwang1997tmna} (see also \cite{trudinger1997cpde, urbas1990indiana}). See \cite{MaZhang2022arxiv} for the detailed argument.

\subsection{Existence and $C^{1,1}$-estimates}
The existence follows from the uniform $C^2$-estimates for $u^{\varepsilon, r}$.

For any fixed sufficiently small $\varepsilon>0$ and compact subset $K\subset\subset\Omega\setminus\{0\}$,
 there exist $r_0$ sufficiently small such that $K\subset\subset \Omega_r$,  $|u^{\varepsilon, r}|_{C^2({\Omega_{r_0}})}\le C(\epsilon, K)$ for any $r<r_0$.
 By Evans-Krylov theory,  $|u^{\varepsilon, r}|_{C^{2,\alpha}{(K)}}\le C(\epsilon, K,m)$. Then there exists a subsequence $u^{\varepsilon, r_i}$ converging in $C^{2,\beta}$-norm ($\beta<\alpha$) to a strictly $k$-convex $u^\varepsilon$ in $K$ and $u^{\varepsilon}\in C^{2,\alpha}(K)$ satisfies
 \begin{align}
 	\left\{\begin{aligned}
 		S_k(D^2 u^\varepsilon)=\varepsilon\qquad\text{in}\ \ \Omega\setminus\{0\},\\
 		u^\varepsilon=1 \qquad\text{on}\ \ \p\Omega.
 		\end{aligned}
 	\right.
 	\end{align}

Moreover, by Theorem \ref{apu10720}, we have the following estimate
 \begin{align*}
 	\left\{
 	\begin{aligned}
 		C^{-1}|x|^{-\frac{n-2k}{k}}\le& -u^{\varepsilon}(x)\le C|x|^{-\frac{n-2k}{k}},\\
 		|Du^{\varepsilon}|(x)\le& C|x|^{-\frac{n-k}{k}},\\
 		|D^2u^{\varepsilon}|(x)\le& C|x|^{-\frac{n}{k}},
 	\end{aligned}
 	\right.
 \end{align*}
 Thus there exits a subsequence $u^{\epsilon_i}$ converges to $u$ in $C^{1,\alpha}_{loc}$ such that $u\in C^{1,1}(\overline\Omega\setminus\{0\})$ is the $k$-convex solution of the k-Hessian equation \eqref{case1Equa1.1} and satisfies the estimates \eqref{decay10720}.

 \textbf{Case 2: $k<\frac{n}{2}$}\\
 Similar as case 1,  there exists a subsequence $u^{\varepsilon, r_i}$ converging smoothly to a strictly $k$-convex $u^\varepsilon$ in $K$ and $u^{\varepsilon}\in C^{2,\alpha}(\Omega\setminus\{0\})$ satisfies
 \begin{align}
 	\left\{\begin{aligned}
 		S_k(D^2 u^\varepsilon)=\varepsilon\qquad\text{in}\ \ \Omega\setminus\{0\},\\
 		u^\varepsilon=-1, \qquad\text{on}\ \ \p\Omega.
 	\end{aligned}
 	\right.
 \end{align}

 Moreover, by Theorem \ref{apu20720}, we get
 \begin{align*}
 	\left\{
 	\begin{aligned}
 		 |u^{\varepsilon}(x)-|x|^{\frac{2k-n}{k}}|\le& C,\\
 	|Du^{\varepsilon}|(x)\le& C|x|^{-\frac{n-k}{k}},\\
 		|D^2u^{\varepsilon}|(x)\le& C|x|^{-\frac{n}{k}},
 	\end{aligned}
 	\right.
 \end{align*}
 Thus there exits a subsequence $u^{\epsilon_i}$ converges to $u$ in $C^{1,\alpha}_{loc}$ such that $u\in C^{1,1}(\overline\Omega\setminus\{0\})$ is the $k$-convex solution of the $k$-Hessian equation \eqref{case2Equa1.2} and satisfies the estimates \eqref{decay20720}.

 \textbf{Case 3: $k=\frac{n}{2}$}\\
Similar as case 1,  there exists a subsequence $u^{\varepsilon, r_i}$ converging smoothly to a strictly $k$-convex $u^\varepsilon$ in $K$ and $u^{\varepsilon}\in C^{\infty}(\Omega\setminus\{0\})$ satisfies
 \begin{align}
 	\left\{\begin{aligned}
 		S_k(D^2 u^\varepsilon)=\varepsilon\qquad\text{in}\ \ \Omega\setminus\{0\},\\
 		u^\varepsilon=0, \qquad\text{on}\ \ \p\Omega.
 	\end{aligned}
 	\right.
 \end{align}

 Moreover, by Theorem \ref{apu30720}, we get
 \begin{align*}
 	\left\{
 	\begin{aligned}
 		|u^{\varepsilon}(x)-\log|x||\le& C,\\
 	|Du^{\varepsilon}|(x)\le& C|x|^{-1},\\
 		|D^2u^{\varepsilon}|(x)\le& C|x|^{-2},
 	\end{aligned}
 	\right.
 \end{align*}
 Thus there exits a subsequence $u^{\epsilon_i}$ converges to $u$ in $C^{1,\alpha}_{loc}$ such that $u\in C^{1,1}(\Omega\setminus\{0\})$ is the $k$-convex solution of the $k$-Hessian equation \eqref{case3Equa1.3} and satisfies the estimates \eqref{decay30720}.

\section{A monotonicity formula along the level set of the approximating solution}
 Agostiniani-Mazzieri \cite{AM2020CVPDE} proved an monotonicity formula along the level set of the solution of the following problem
 \begin{align}
 	\left\{
 	\begin{aligned}
 		\Delta u=0 \ \text{in} \ \Omega^c\\
 		u=-1  \ \text{on} \ \p\Omega\\
 		\lim\limits_{|x|\rightarrow\infty}u(x) =0.
 		\end{aligned}
 	\right.
 	\end{align}
 Since the solution of the homogeneous $k$-Hessian equation is only $C^{1,1}$, we consider the level set of $u^{\varepsilon}$. 
In \cite{MaZhang2022arxiv}, we prove an monotonicity formula along the level set of the solution of the exterior Dirichlet problem of the approximating $k$-Hessian equation. As an application of our uniform $C^{1,1}$ estimates of $u^{\varepsilon}$ and the positive lower bound of $|Du^{\varepsilon}|$, we prove an interior version of \cite{MaZhang2022arxiv}. 

 We firstly estimate the area of the level set $S_{t}=\{x\in\Omega\setminus\{0\}: u^{\varepsilon}(x)=t\}$.

 \begin{lemma}\label{st0723}
There exits uniform constant $C$ such that
 	\begin{align}
 		|S_t|	\le&
 		\left\{\begin{aligned} Ct^{\frac{k(n-1)}{2k-n}} \quad &{\forall}	 t\in (0,1]\quad \text{if} \ k>\frac{n}{2},\\
 			C|t|^{-\frac{k(n-1)}{n-2k}} \quad &{\forall}\ 	 t\in (-\infty,-1]\quad \text{if} \ k<\frac{n}{2},\\
 			 Ce^{(n-1)t}\quad &{\forall} t\in (-\infty, 0]\quad \text{if} \ k=\frac{n}{2}.
 		\end{aligned}
 		\right.
 		\end{align}
 	\end{lemma}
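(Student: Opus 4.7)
The plan is to express each level set $S_t$ as a radial graph over $S^{n-1}$ and reduce the area bound to uniform control of the graphing function together with its maximum value.

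\textbf{Step 1 (Graph representation).} First I would observe that, since $\Omega$ is starshaped with respect to $0$ and the lower bound $x\cdot Du^{\varepsilon}(x)\ge c_0|x|^{2-n/k}$ (passed to the limit $r\to 0$ from the analogous bound for $u^{\varepsilon,r}$) forces $u^{\varepsilon}$ to be strictly increasing along every ray emanating from $0$, each level set $S_t$ meets each radial ray in exactly one point. Hence $S_t$ is a radial graph
\[ S_t=\{\rho_t(\theta)\theta:\theta\in S^{n-1}\} \]
for some positive $\rho_t\in C^{1,1}(S^{n-1})$.

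\textbf{Step 2 (Slope bound).} Differentiating $u^{\varepsilon}(\rho_t(\theta)\theta)=t$ along a tangent vector $\tau\in T_\theta S^{n-1}$ gives $\partial_\tau\rho_t=-\rho_t\,(Du^{\varepsilon}\!\cdot\tau)/(Du^{\varepsilon}\!\cdot\theta)$. Since $Du^{\varepsilon}\!\cdot\theta=(x\cdot Du^{\varepsilon})/|x|$ at $x=\rho_t\theta$, combining the upper bound on $|Du^{\varepsilon}|$ from the gradient estimate with the lower bound on $x\cdot Du^{\varepsilon}$ yields, in each of the three cases,
\[ \frac{|\nabla_{S^{n-1}}\rho_t|}{\rho_t}\le\frac{|x|\,|Du^{\varepsilon}|}{x\cdot Du^{\varepsilon}}\le C, \]
uniformly in $t$ and $\varepsilon$ (one checks case by case that numerator and denominator both scale like $|x|^{2-n/k}$ in the power-law cases and like a constant when $k=n/2$). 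The area element of the graph is therefore bounded by $C\rho_t^{\,n-1}$, so $|S_t|\le C\max_{\theta\in S^{n-1}}\rho_t^{\,n-1}$.

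\textbf{Step 3 (Radius bound and conclusion).} The maximum of $\rho_t$ is then read off directly from the $C^{0}$ bounds on $u^{\varepsilon}$ evaluated on $S_t$: when $k>n/2$, the inequality $t\ge C^{-1}\rho_t^{(2k-n)/k}$ gives $\rho_t\le (Ct)^{k/(2k-n)}$; when $k<n/2$, $-t\ge C^{-1}\rho_t^{-(n-2k)/k}$ gives $\rho_t\le C|t|^{-k/(n-2k)}$; and when $k=n/2$, $|t-\log\rho_t|\le C$ gives $\rho_t\le Ce^t$. Raising each to the power $n-1$ produces exactly the three claimed upper bounds on $|S_t|$.

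The main obstacle is purely bookkeeping: one must check that both the $C^{0}$ bounds and the strictly positive lower bound on $x\cdot Du$, which were proved for the approximating solutions $u^{\varepsilon,r}$ on $\overline{\Omega_r}$ with constants independent of $r$, descend to the limit $u^{\varepsilon}$ on $\Omega\setminus\{0\}$. This follows from the $C^{1,\alpha}_{\mathrm{loc}}$ convergence already invoked in the existence proof in Section~4. Once these are in hand, the geometric reduction above is essentially routine.
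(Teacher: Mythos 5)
Your proof is correct but takes a genuinely different route from the paper. The paper computes $|S_t|-|\partial B_r|=\int_{\{u^\varepsilon<t\}\setminus B_r}\mathrm{div}\big(Du^\varepsilon/|Du^\varepsilon|\big)\,dx$, bounds the integrand pointwise by $C|D^2u^\varepsilon|/|Du^\varepsilon|\le C|x|^{-1}$ using both the second-order decay $|D^2u^\varepsilon|\le C|x|^{-n/k}$ and the gradient lower bound $|Du^\varepsilon|\ge c|x|^{1-n/k}$, and then integrates over the containing ball $\{u^\varepsilon<t\}\subset B_{Ct^{k/(2k-n)}}$ (with the analogous containments in the other two cases) before sending $r\to 0$. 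You instead exploit starshapedness more directly: the lower bound $x\cdot Du^\varepsilon\ge c_0|x|^{2-n/k}$ makes $S_t$ a radial graph $\rho_t(\theta)\theta$, the ratio $|x||Du^\varepsilon|/(x\cdot Du^\varepsilon)$ bounds the logarithmic slope $|\nabla_{S^{n-1}}\rho_t|/\rho_t$ uniformly, and the $C^0$ estimate caps $\rho_t$. Your route is more elementary in that it bypasses the divergence theorem and, notably, does not use the second-order estimate $|D^2 u^\varepsilon|\le C|x|^{-n/k}$ at all — only $C^0$, the gradient upper bound, and the starshapedness lower bound on $x\cdot Du^\varepsilon$ are needed. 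Both proofs rely on the starshapedness assumption (the paper through the gradient lower bound, you through the graph structure and $x\cdot Du^\varepsilon$), and both constants are uniform in $\varepsilon$ for the same reason, so the trade-off is purely one of technique: the paper's argument is closer in spirit to the mean-curvature computations that recur throughout Section~5, while yours isolates the area bound as a standalone geometric fact about starshaped graphs with uniformly bounded slope. Your closing remark about passing the estimates from $u^{\varepsilon,r}$ to $u^{\varepsilon}$ via $C^{1,\alpha}_{\mathrm{loc}}$ convergence is exactly the right justification and matches what the paper implicitly does.
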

 	 \begin{proof}
 	 	For any fixed $t$, assume $r>0$ sufficiently small,
 	  we have
	\begin{align*}
	  {|S_t|-|\partial B_r|=\int_{\{u<t\}\setminus{B_r}}\mathrm{div}\Big(\frac{Du^{\varepsilon}}{|Du^{\varepsilon}|}\Big)dx}. 
 \end{align*}
\textbf{Case1: $k>\frac{n}{2}$}

For any $x\in\{x:u(x)<t\}$, since $|D^2u^{\varepsilon}|(x)\le C|x|^{-\frac{n}{k}}$ and $|Du^{\varepsilon}|\ge c|x|^{1-\frac{n}{k}}$, we have 
\begin{align*}
	\Big|\mathrm{div}\Big(\frac{D u^{\varepsilon}}{|Du^{\varepsilon}|}\Big)\Big|=\Big|\frac{\Delta u^{\varepsilon}}{|Du^{\varepsilon}|}-\frac{u^{\varepsilon}_{ij}u^{\varepsilon}_iu^{\varepsilon}_j}{|Du^{\varepsilon}|^3}\Big|\le C|D^2u^{\varepsilon}||Du^{\varepsilon}|^{-1}\le C|x|^{-1}.
	\end{align*}
Combining the above estimate with  $\{u<t\}\subset B_{Ct^{\frac{k}{2k-n}}}$, we have
\begin{align}
0\le |S_t|-|\partial B_r|\le& 	 C\int_{B_{Ct^{\frac{k}{2k-n}}}}{|x|^{-1}}dx\notag\\
\le& C\int_{0}^{Ct^{\frac{k}{2k-n}}} s^{n-2} ds\notag\\
\le&\ Ct^{(n-1)\frac{k}{2k-n}}.
	\end{align}
 Taking $r\rightarrow 0$, we have 
\begin{align*}
	|S_t|\le\ C|t|^{(n-1)\frac{k}{2k-n}}.
	\end{align*}

\textbf{Case2: $k<\frac{n}{2}$.}
Similar argument shows that
\begin{align}
	 |S_t|-|\partial B_r|\le& C\int_{0}^{C|t|^{-\frac{k}{2n-k}}} s^{n-2}ds\notag\\
	\le&\ C|t|^{-(n-1)\frac{k}{n-2k}} .
\end{align}
\textbf{Case3: $k=\frac{n}{2}$.}

We have
\begin{align}
 |S_t|-|\partial B_r|\le&\  C\int_{0}^{C\mathrm{e}^t} s^{n-2}ds\notag\\
 \le&  \ C{e}^{(n-1)t}.
\end{align}

 \end{proof}

Similar as the exterior case in \cite{MaZhang2022arxiv}, we consider the following quantity
\begin{align}
	I_{a, b, k}(t):=
	\int_{S_t}
	g^a(u^{\varepsilon})|Du^{\varepsilon}|^{b-k}S_k^{ij}(D^2u^{\varepsilon})u^{\varepsilon}_iu^{\varepsilon}_j,
\end{align}
where $g(u^{\varepsilon})$ is defined by
\vspace*{-5pt}
\begin{align}
	g(u^{\varepsilon})=\left\{
	\begin{aligned}
		&(u^{\varepsilon})^{\frac{n-k}{2k-n}}, \ \quad k>\frac{n}{2},\\
		&(-u^{\varepsilon})^{\frac{n-k}{2k-n}}, k<\frac{n}{2},\\
	&{e}^{u^{\varepsilon}},\ \qquad k=\frac{n}{2}.
		\end{aligned}
	\right.
	\end{align}
We choose $a=b-k+1$ and one can see that $I_{a,b,k}(t)$ is uniformly bounded  due to the $C^2$ estimates of $u^{\varepsilon}$ and the positive lower bound of $|Du^{\varepsilon}|$.
We define
\begin{align}	J_{a+a_0,b,k}(t,t_0):=g^{a_0}(t) I'_{a,b,k}(t)-g^{a_0}(t_0)  I'_{a,b,k}(t_0).
	\end{align}
We prove the following useful equality along the level set of $u^{\varepsilon}$.
\begin{proposition}\label{0725mono1} Let $u^{\varepsilon}$ be the solution of the approximating k-Hessian equation with $a=b-k+1$.
	We have the following identity
		\begin{align}\label{monotone20210831:1}
		J_{a+a_0,b,k}(t,t_0)
		=& -ba\int_{t_0}^{t}\int_{S_{s}}
	\Big(g^{a+a_0}|Du^{\varepsilon}|^{b-k-1}\frac{H_k}{H_{k-1}}S_k\Big)dAds+(b+1)\int_{t_0}^{t}\int_{S_s}\Big(g^{a+a_0-1}g'|Du^{\varepsilon}|^{b-k}S_k\Big)dAds\notag\\&
	+(b+1)\int_{S_t}\Big(g^{a+a_0}|Du^{\varepsilon}|^{b-k}S_k\Big)dA-(b+1)\int_{S_{t_0}}\Big(g^{a+a_0}|Du^{\varepsilon}|^{b-k}S_k\Big)dA\notag\\
		&+a\int_{t_0}^{t}\int_{S_{s}}g^{a+a_0}|Du^{\varepsilon}|^{b-1}H_{k-1}^{-1}\Big(c_{n,k}H_{k}^2-(k+1)H_{k-1}H_{k+1}\Big)dAds\notag\\
		&+a\int_{t_0}^{t}\int_{S_{s}}
		g^{a+a_0}
		|Du^{\varepsilon}|^{b-1}\mathcal{L}\ dA ds-ab\int_{t_0}^{t}\int_{S_s}{g^{a+a_0}|Du^{\varepsilon}|^{b-k-2}\mathcal{M}} \ dAds,
		\end{align}
where $H_m$ is the $m$-th order fundamental symmetric function of principal curvatures $m$-Hessian operator of the level set $S_s$ of $u^{\varepsilon}$, $a_0, b, c_{n,k}=\frac{k(n-k-1)}{n-k}$ and the functions $\mathcal{L}$ are chosen as follows
\begin{enumerate}[{(i)} ]
		\item	If $1\le k<\frac{n}{2}$, we require  $-\infty< t_0<t\le -1$, $a_0=-2\frac{n-2k}{n-k}$ and
		$\mathcal{L}=(b-c_{n,k})\Big(
		\frac{n-k}{n-2k}|D\log u^{\varepsilon}|-\frac{H_k}{H_{k-1}}\Big)^2.$
	\end{enumerate}
\begin{enumerate}[{(ii)} ]
	\item	If $k=\frac{n}{2}$, we require  $-\infty< t<t_0\le 0$, $a_0=0$ and  $\mathcal{L}=a\Big(
	|D u^{\varepsilon}|-\frac{H_k}{H_{k-1}}\Big)^2$.
\end{enumerate}
\begin{enumerate}[{(iii)} ]
	\item	If $k>\frac{n}{2}$, we require  $0< t<t_0\le 1$, $a_0=2\frac{2k-n}{n-k}$, $\mathcal{L}=(b-c_{n,k})\Big(
	\frac{n-k}{n-2k}|D\log u^{\varepsilon}|-\frac{H_k}{H_{k-1}}\Big)^2$.
\end{enumerate}

 and \begin{align}\label{m}
 	\mathcal{M}:=S_{k+1}&-\frac{H_k}{H_{k-1}}|Du^{\varepsilon}|S_k+\frac{H_k^2}{H_{k-1}}|Du^{\varepsilon}|^{k+1}-H_{k+1}|Du|^{k+1}\le 0.
 	\end{align}

	\end{proposition}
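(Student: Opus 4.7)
The plan is to write the surface integral $I_{a,b,k}(t)$ in divergence form over the annular sublevel region $\{t_0<u^\varepsilon<s\}$ and then convert back to surface integrals via the coarea formula, using three standard ingredients throughout: the Newton tensor is divergence-free, $\partial_j S_k^{ij}(D^2 u^\varepsilon)=0$; Euler's identity gives $S_k^{ij}u^\varepsilon_{ij}=k\,S_k(D^2 u^\varepsilon)=k\varepsilon$; and in the orthonormal frame with $\nu=Du^\varepsilon/|Du^\varepsilon|$, the Reilly-type identity $S_k^{ij}u^\varepsilon_i u^\varepsilon_j=|Du^\varepsilon|^{k+1}H_{k-1}(\kappa)$ (together with its $S_{k+1}$-analogue) translates Newton-tensor contractions on the level set $S_s$ into elementary symmetric functions of its principal curvatures $\kappa$. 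The positive gradient lower bound $|Du^\varepsilon|\ge c|x|^{1-n/k}$ from the previous subsection ensures that each $S_s$ is smooth and justifies the divergence manipulations throughout.

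The first step is to express $I'_{a,b,k}(s)$ as a surface integral on $S_s$. Consider the vector field $X^j:=g^a(u^\varepsilon)|Du^\varepsilon|^{b-k+1}S_k^{ij}(D^2 u^\varepsilon)u^\varepsilon_i$, which satisfies $X\cdot\nu=g^a|Du^\varepsilon|^{b-k}S_k^{ij}u^\varepsilon_iu^\varepsilon_j$ on $S_s$. The divergence theorem combined with coarea gives
\[
I'_{a,b,k}(s)=\int_{S_s}\frac{\partial_j X^j}{|Du^\varepsilon|}\,d\sigma,
\]
and expanding $\partial_j X^j$ by the three ingredients above yields exactly three pieces: a $g'$-contribution from differentiating $g^a$, a mixed-Hessian contribution $|Du^\varepsilon|^{b-k-1}S_k^{ij}u^\varepsilon_iu^\varepsilon_{jm}u^\varepsilon_m$ from differentiating $|Du^\varepsilon|^{b-k+1}$, and an $\varepsilon$-contribution arising from $S_k^{ij}u^\varepsilon_{ij}$. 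Each of these is then converted into a curvature integrand involving $H_{k-1},H_k,H_{k+1}$ by the Reilly-type identities.

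Next, I write $J_{a+a_0,b,k}(t,t_0)=\int_{t_0}^{t}\frac{d}{ds}\bigl(g^{a_0}(s)I'_{a,b,k}(s)\bigr)\,ds$ and differentiate by the product rule. The term $a_0g^{a_0-1}g'\cdot I'$ combines with the $g'$-contribution inside $I'$ to produce the double integral weighted by $g^{a+a_0-1}g'$, and a further integration by parts in $s$ on the total-derivative portion yields the boundary terms on $S_t$ and $S_{t_0}$ with coefficient $(b+1)$. The remaining piece $g^{a_0}I''$ is handled by applying the divergence-theorem/coarea step a second time; the value of $a_0$ in each of the three cases is chosen precisely so that the $\varepsilon$-contribution from this step cancels against the $g^{a-1}g'$ factor, by virtue of the ODE satisfied by $g$ in that regime, leaving only geometric terms on the right-hand side. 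Finally, the residual curvature integrand is reorganized using the Newton--MacLaurin identity built around the constant $c_{n,k}=k(n-k-1)/(n-k)$: one completes the square in the form displayed in the proposition to isolate the nonnegative quantity $\mathcal{L}$, and the remainder is identified as $\mathcal{M}$, whose nonpositivity is the Newton--MacLaurin inequality applied to the principal-curvature vector of the strictly $(k-1)$-convex level set $S_s$.

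The main obstacle is this last algebraic bookkeeping: the mixed-Hessian contraction $S_k^{ij}u^\varepsilon_iu^\varepsilon_{jm}u^\varepsilon_m$ must be expanded in the orthonormal frame adapted to $\nu$ and then rewritten in terms of the full curvature ladder $(H_{k-1},H_k,H_{k+1})$, and the cancellations leading to the clean structure of \eqref{monotone20210831:1} are sensitive to the particular exponents in the three regimes $k>n/2$, $k=n/2$, $k<n/2$. Once the case-by-case values of $a_0$ are correctly matched with the logarithmic derivative of $g$, however, the identity reduces to tracking coefficients, and the assertion $\mathcal{M}\le 0$ follows from the classical Newton--MacLaurin inequality in the $\Gamma_{k-1}$-cone.
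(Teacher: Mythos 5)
Your high-level plan — divergence theorem plus coarea to turn surface integrals into volume integrals, the Reilly-type identity $S_m^{ij}u_iu_j=|Du|^{m+1}H_{m-1}(\kappa)$ to pass to curvature quantities, and Newton's inequality to control the sign — is indeed the framework the paper uses. But two of the steps you describe would not survive an attempt to carry them out, and they are the two places where the real work of the proposition lies.

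First, your account of the role of $a_0$ is wrong. You claim $a_0$ is chosen "so that the $\varepsilon$-contribution from this step cancels against the $g^{a-1}g'$ factor, $\ldots$ leaving only geometric terms on the right-hand side." Look at the right-hand side of \eqref{monotone20210831:1}: the first two lines are exactly the surviving $\varepsilon$-weighted terms (they carry a factor of $S_k(D^2u^\varepsilon)=\varepsilon$). Nothing is canceled; these terms persist in the identity and are later shown to vanish as $\varepsilon\to0$ using the uniform $C^2$ bounds, the level-set area estimate of Lemma \ref{st0723}, and the gradient lower bound. The constant $a_0$ is instead chosen by a purely algebraic criterion on $g$: after all the bookkeeping one is left with a quadratic form in $H_k/H_{k-1}$ and $(\log g)'|Du^\varepsilon|$, with coefficients $(b-c_{n,k})$, $-(2a+a_0)(\log g)'$, and $(\log g)''+(a+a_0)((\log g)')^2$ (this is \eqref{0723} in the paper). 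The choice of $a_0$ makes the discriminant of that quadratic vanish, so that it collapses to the perfect square displayed as $\mathcal{L}$. Misattributing $a_0$'s role would leave you unable to derive the stated form of $\mathcal{L}$.

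Second, identifying $\mathcal{M}$ and its sign is not "the Newton--MacLaurin inequality applied to the principal-curvature vector of $S_s$." The quantity $\mathcal{M}$ is not a function of $\kappa$ alone. In the orthonormal frame with $e_n=\nu$, diagonalizing the tangential Hessian $\{u^\varepsilon_{\alpha\beta}\}=\{\tilde\lambda_\alpha\delta_{\alpha\beta}\}$, one must expand $S_{k+1}(D^2u^\varepsilon)$ using the cofactor expansion in the $n$-th row and column and then eliminate $u^\varepsilon_{nn}$ via the corresponding expansion of $S_k$. This yields
\begin{align*}
S_{k+1}-\frac{S_k(\tilde\lambda)}{S_{k-1}(\tilde\lambda)}S_k+\frac{S_k^2(\tilde\lambda)}{S_{k-1}(\tilde\lambda)}-S_{k+1}(\tilde\lambda)
=\sum_{i=1}^{n-1}(u^\varepsilon_{ni})^2\,\frac{S_{k}(\tilde\lambda|i)S_{k-2}(\tilde\lambda|i)-S_{k-1}^2(\tilde\lambda|i)}{S_{k-1}(\tilde\lambda)},
\end{align*}
and since $S_m(\tilde\lambda)=H_m|Du^\varepsilon|^m$, the left side is precisely $\mathcal{M}$. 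The nonpositivity is Newton's inequality for the restricted vectors $(\tilde\lambda|i)$, and, crucially, it is the factor $(u^\varepsilon_{ni})^2$ — the tangential-normal Hessian components measuring how level sets vary in the normal direction — that makes $\mathcal{M}$ a genuine defect term rather than something intrinsic to $S_s$. Without this frame identity you have no way to see where $\mathcal{M}$ comes from or why it has the prescribed sign.

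A lesser remark: the paper never computes $I''_{a,b,k}$. Writing $J=g^{a_0}(t)I'(t)-g^{a_0}(t_0)I'(t_0)$ and using $a=b-k+1$ lets one recognize the three pieces of $I'$ as a divergence integral, an $I_{a+a_0,b-1,k+1}$-type difference, and boundary $E$-terms, each of which can be processed by a single further divergence/coarea step. Your proposed route through $\int\frac{d}{ds}(g^{a_0}I')\,ds$ and the product rule requires an explicit second derivative of a surface integral, which is more delicate and not needed.
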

\begin{proof}
		\textbf{For simplicity, we use $u$ instead of $u^{\varepsilon}$ and $S_k$ intead of $S_k(D^2u^{\varepsilon})$ during the proof.}
		
		We use the notation $\Omega_t:=\{x\in\Omega\setminus\{0\}: u(x)>t\}$ and we define $\Omega_{t_0t}:=\Omega_{t_0}\setminus\overline\Omega_{t}$ for any $t_0<t$.
		
	By  the divergence theorem and the divergence free property of the $k$-Hessian operator i.e. $\sum\limits_{j=1}^nD_jS_k^{ij}=0$, we have
	\begin{align}
		I_{a,b,k}(t)-I_{a,b,k}(t_0)=&\int\limits_{\Omega_{t_0t}}D_j\left(g^a|Du|^{b+1-k}S_{k}^{ij}u_i\right)\notag\\
		=&a\int_{\Omega_{t_0t}}g^{a-1}g'|Du|^{b+1-k}S_k^{ij}u_iu_j\notag\\
		&+(b+1-k)\int_{\Omega_{t_0t}}g^{a}|Du|^{b-k-1}S_k^{ij}u_iu_lu_{ij}+k\int_{\Omega_{t_0t}}g^a|Du|^{b+1-k}S_k\notag\\
		=&a\int_{\Omega_{t_0t}}g^{a-1}g'|Du|^{b+1-k}S_k^{ij}u_iu_j\notag\\
		&-(b+1-k)\int_{\Omega_{t_0t}}g^{a}|Du|^{b-k-1}S_{k+1}^{ij}u_iu_{j}+(b+1)\int_{\Omega_{t_0t}}g^a|Du|^{b+1-k}S_k\notag\\
		=&a\int_{t_0}^{t}\int_{S_{s}}g^{a-1}g'|Du|^{b-k}S_k^{ij}u_iu_j-(b-k+1)\int_{t_0}^{t}\int_{S_{s}}g^a|Du|^{b-k-2}S_{k+1}^{ij}u_iu_{j}\notag\\
				&+(b+1)\int_{t_0}^{t}\int_{S_s}g^{a}|Du|^{b-k}S_k,\label{0726It}
	\end{align}

where we use $S_k^{ij}u_iu_lu_{lj}=|Du|^2S_k-S_{k+1}^{ij}u_iu_{j}$ and the coarea formula.

Then
	\begin{equation}\label{0726I}
		\begin{aligned}
			I'_{a,b,k}(t)=&a\int_{S_t}g^{a-1}g'|Du|^{b-k}S_k^{ij}u_iu_j\\
			&-(b+1-k)\int_{S_t}g^{a}|Du|^{b-k-2}S_{k+1}^{ij}u_iu_{j}+E_{a,b,k}(t),
		\end{aligned}
	\end{equation}
where $E_{a,b,k}(t)=(b+1)\int_{S_t}g^{a}|Du|^{b-k}S_k$.

Then we have
\begin{align}\label{10724}
	&J_{a+a_0,b,k}(t,t_0)=g^{a_0}(t) I'_{a,b,k}(t)-g^{a_0}(t_0) I'_{a,b,k}(t_0)\notag\\
	=&a\int_{\Omega_{t_0t}}D_j\Big(g^{a+a_0-1}g'|Du|^{b-k+1}S_k^{ij}u_i\Big)\notag\\
	&-a\Big(I_{a+a_0,b-1,k+1}(t)-I_{a+a_0,b-1,k+1}(t_0)\Big)+E_{a+a_0,b,k}(t)-E_{a+a_0,b,k}(t_0),
	\end{align}
where we use $a=b-k+1$.
We will compute the terms in \eqref{10724}.\\
Firstly we have
\begin{align}\label{30724}	
&\int_{\Omega_{t_0t}}D_j\Big(g^{a+a_0-1}g'|Du|^{b-k+1}S_k^{ij}u_i\Big)dx\notag\\
=&	\int_{t_0}^{t}\int_{S_s}\Big((g^{a+a_0-1}g'\Big)'|Du|^{b-k}S_k^{ij}u_iu_j dAds
\notag\\
&+(b-k+1)\int_{t_0}^{t}\int_{S_s}\Big(g^{a+a_0-1}g'|Du|^{b-k-2}S_k^{ij}u_iu_lu_{lj}\Big)dAds+k\int_{t_0}^{t}\int_{S_s}\Big(g^{a+a_0-1}g'|Du|^{b-k}S_k\Big)dAds\notag\\
=&\int_{t_0}^{t}\int_{S_s}\Big((g^{a+a_0-1}g')'|Du|^{b-k}S_k^{ij}u_iu_j\Big)dAds\notag\\
&-(b-k+1)\int_{t_0}^{t}\int_{S_s}\Big(g^{a+a_0-1}g'|Du|^{b-k-2}S_{k+1}^{ij}u_iu_j\Big)dAds+(b+1)\int_{t_0}^{t}\int_{S_s}\Big(g^{a+a_0-1}g'|Du|^{b-k}S_k\Big)dAds\notag\\
=&\int_{t_0}^{t}\int_{S_s}\Big((g^{a+a_0-1}g')'|Du|^{b+1}H_{k-1}\Big)dAds
-(b-k+1)\int_{t_0}^{t}\int_{S_s}\Big(g^{a+a_0-1}g'|Du|^{b}H_{k}\Big)dAds\notag\\
&+(b+1)\int_{t_0}^{t}\int_{S_s}\Big(g^{a+a_0-1}g'|Du|^{b-k}S_k\Big)dAds,
	\end{align}
where we use the identity $H_{m-1}|Du|^{m+1}={S_{m}^{ij}}{u_{i}u_{j}}$ for $m\in \{1,2,\cdots, n\}$ (see e.g. \cite{Reilly1973,Trudinger1997,Salani2008}).

For the term $I_{a+a_0,b-1,k+1}(t)-I_{a+a_0,b-1,k+1}(t_0)$, similar as the manipulation of \eqref{0726It}, we have
\begin{align}\label{20724}
	I_{a+a_0,b-1,k+1}(t_0)-&I_{a+a_0,b-1,k+1}(t)\notag\\
	=&(a+a_0)\int_{t_0}^{t}\int_{S_{s}}g^{a+a_0-1}g'|Du|^{b-k-2}S_{k+1}^{ij}u_iu_j dAds\notag\\
	&-(b-1-k)\int_{t_0}^{t}\int_{S_{s}}g^{a+a_0}|Du|^{b-k-4}S_{k+2}^{ij}u_iu_{j}\notag\\
	&+b\int_{t_0}^{t}\int_{S_s}g^{a+a_0}|Du|^{b-k-2}S_{k+1}.
	\end{align}

Next we deal with the above term involving $S_{k+1}$. Choose the coordinate such that   $u_n(x_0)=|Du|(x_0)$ and $\{u_{ij}(x_0))\}_{1\le i,j \le n-1}=\{\tilde\lambda_i\delta_{ij}\}_{1\le i,j \le n-1}$ is diagonal, we have
\begin{align*}
	S_{k+1}=&u_{nn}S_{k}(\tilde\lambda)+S_{k+1}(\tilde\lambda)-\sum\limits_{i=1}^{n-1}S_{k-1}(\tilde\lambda|i)u_{ni}^2\\
	S_{k}=&u_{nn}S_{k-1}(\tilde\lambda)+S_{k}(\tilde\lambda)-\sum\limits_{i=1}^{n-1}S_{k-2}(\tilde\lambda|i)u_{ni}^2,
	\end{align*}
where $\tilde\lambda=(\tilde\lambda_1,\cdots,\tilde\lambda_{n-1})$ and  recall we use the notation $S_k=S_k(D^2 u)$. Then we get
\begin{align}\label{unn0724}
	S_{k+1}=&\frac{S_k(\widetilde\lambda)}{S_{k-1}(\widetilde\lambda)}S_k-\frac{S_k^2(\widetilde\lambda)}{S_{k-1}(\tilde\lambda)}+\sum_{i=1}^{n-1}u_{ni}^2\frac{S_{k}(\tilde \lambda|i)S_{k-2}(\tilde \lambda|i)-S_{k-1}^2(\tilde\lambda|i)}{S_{k-1}(\tilde \lambda)}+S_{k+1}(\tilde\lambda).
	\end{align}
Noting that $S_{m}(\tilde \lambda)=|Du|^{-2}S_{m+1}^{ij}u_iu_j=H_{m}|Du|^{m}$ is  globally defined, we obtain $$S_{k+1}-\frac{H_k}{H_{k-1}}|Du^{\varepsilon}|S_k+\frac{H_k^2}{H_{k-1}}|Du^{\varepsilon}|^{k+1}-H_{k+1}|Du|^{k+1}=\sum_{i=1}^{n-1}u_{ni}^2\frac{S_{k}(\tilde \lambda|i)S_{k-2}(\tilde \lambda|i)-S_{k-1}^2(\tilde\lambda|i)}{S_{k-1}(\tilde \lambda)}\leq 0.$$ This proves \eqref{m}
Inserting \eqref{unn0724} into \eqref{20724} and noting that $S_{m}(\tilde \lambda)=|Du|^{-2}S_{m+1}^{ij}u_iu_j=H_{m}|Du|^{m}$ is  globally defined , then we have
\begin{align}\label{20724:1}
	I_{a+a_0,b-1,k+1}(t)&-I_{a+a_0,b-1,k+1}(t_0)\notag\\ =&(a+a_0)\int_{t_0}^{t}\int_{S_{s}}g^{a+a_0-1}g'|Du|^{b}H_k dAds\notag\\
	&+(k+1)\int_{t_0}^{t}\int_{S_{s}}g^{a+a_0}|Du|^{b-1}H_{k+1}dAds\notag\\
	&-b\int_{t_0}^{t}\int_{S_s}g^{a+a_0}|Du|^{b-1}\frac{H_k^2}{H_{k-1}}dAds
	+b\int_{t_0}^{t}\int_{S_s}g^{a+a_0}|Du|^{b-k-1}\frac{H_k}{H_{k-1}}S_kdAds\notag\\
	&+b\int_{t_0}^{t}\int_{S_s}g^{a+a_0}|Du|^{b-k-2}\left(S_{k+1}-\frac{H_k}{H_{k-1}}|Du|S_k+\frac{H_k^2}{H_{k-1}}|Du|^{k+1}-H_{k+1}|Du|^{k+1}\right)
\end{align}
Inserting \eqref{30724} and \eqref{20724} into \eqref{10724}, we obtain
\begin{align}
	J_{a+a_0,b,k}(t,t_0)=& -ba\int_{t_0}^{t}\int_{S_{s}}
	g^{a+a_0}|Du|^{b-k-1}\frac{H_k}{H_{k-1}}S_kdAds\notag\\
	&+(b+1)\int_{S_t}g^{a+a_0}|Du|^{b-k}S_kdA-(b+1)\int_{S_{t_0}}g^{a+a_0}|Du|^{b-k}S_kdA\notag\\
	&+a\int_{t_0}^{t}\int_{S_{s}}g^{a+a_0}|Du|^{b-1}H_{k-1}^{-1}\Big(c_{n,k}H_{k}^2-(k+1)H_{k-1}H_{k+1}\Big)dAds\notag\\
	&+a\int_{t_0}^{t}\int_{S_{s}}
	g^{a+a_0}
	|Du|^{b-1}H_{k-1} \mathcal{L}dAds\notag\\
	&-ab\int_{t_0}^{t}\int_{S_s}g^{a+a_0}|Du|^{b-k-2}\left(S_{k+1}-\frac{H_k}{H_{k-1}}|Du|S_k+\frac{H_k^2}{H_{k-1}}|Du|^{k+1}-H_{k+1}|Du|^{k+1}\right).
	\end{align}
where the function $\mathcal{L}$ is defined by
\begin{align}\label{0723}
\mathcal{L}=&(b-c_{n,k})\Big(
\frac{H_k}{H_{k-1}}\Big)^2-(2a+a_0) (\log g)'|Du|\frac{H_k}{H_{k-1}}\notag\\
&+\Big((\log g)''+(a+a_0)((\log g)')^2\Big)|D u|^2.
\end{align}
Now we divide two cases to prove the $\mathcal{L}\ge 0$ under some restrictions on $a$ and $b$.

\textbf{Case1: $k<\frac{n}{2}$ and $\frac{n}{2}<k<n$.}\\
We choose $c_{n,k}=\frac{k(n-k-1)}{n-k}$.
Then we have \begin{align}
	(\log g)''+(a+a_0)((\log g)')^2=&\frac{n-k}{n-2k}u^{-2}+(a+a_0)(\frac{n-k}{n-2k})^2u^{-2}\notag\\
=&(\frac{n-k}{n-2k})^2u^{-2}(\frac{n-2k}{n-k}+a+a_0)\notag\\
=&(b-c_{n,k})(\frac{n-k}{n-2k})^2u^{-2},
\end{align}
where we choose $a_0=-2\frac{n-2k}{n-k}$ and we use  $a=b-k+1$. We also have
\begin{align}
	-(2a+a_0)(\log g)'=2\frac{n-k}{n-2k}(b-c_{n,k})u^{-1}.
	\end{align}
 Then we have

\begin{align}
\mathcal{L}
	=&(b-c_{n,k})\Big(
	\frac{n-k}{n-2k}|D\log u|-\frac{H_k}{H_{k-1}}\Big)^2.
	\end{align}
Consequently,  we obtain the desired identity.

\textbf{Case 2: $k=\frac{n}{2}$.}\\
We have $c_{n,k}=\frac{n}{2}-1>0$. We require $b\ge \frac{n}{2}-1$,  $a=b-\frac{n}{2}+1=b-c_{n,k}\ge 0$ and $a_0=0$.\\
Since $g=e^{u}$ and thus $(a+a_0)^{-1}(g^{a+a_0})''=(a+a_0)g^{a+a_0}$. We obtain
\begin{align}
	\mathcal{L}=a\Big(
	|D u|-\frac{H_k}{H_{k-1}}\Big)^2.
	\end{align}
At last we prove $\mathcal{M}:=S_{k+1}-\frac{H_k}{H_{k-1}}|Du|S_k+\frac{H_k^2}{H_{k-1}}|Du|^{k+1}-H_{k+1}|Du|^{k+1}$ is non-positive similar as that in Ma-Zhang \cite{MaZhang2022arxiv}.


%
	\end{proof}
From the above formula, we have the following almost monotonicity formula along the level set of $u^{\varepsilon}$ and we prove the first part of Theorem \ref{geometric0725}.

 \begin{proposition}\label{0727G1}
 	Let $u^{\varepsilon}$ be the solution of the approximating k-Hessian equation.
 	Assume $\frac{n}{2}<k<n$
 	and $b\ge c_{n,k}=\frac{k(n-k-1)}{n-k}$ and $b\neq k-1$, then for any $t\in(0,1]$, we have
 	\begin{align}
 	\frac{d}{dt}  I_{a,b,k}(t)	\left\{
 		\begin{aligned}
 		 \ge -C\varepsilon  t^{\frac{nk}{2k-n}-1} &\ \text{if} \ a>0,\\
 		  \le C\varepsilon |t|^{\frac{nk}{2k-n}-1}&\ \text{if} \ a< 0.
 		\end{aligned}\right.
 	\end{align}
 	In particular, we have the following weighted inequality
 	\begin{align}\label{0723geometric}
 		\int_{\p\Omega}{|Du|^{b+1}H_{k-1}}\ge \frac{2k-n}{n-k}\int_{\p\Omega}{|Du|^{b}H_{k}},
 	\end{align}
 where $u$ is the unique $C^{1,1}$ solution of the homogeneous $k$-Hessian equation  \eqref{case1Equa1.1}.
 \end{proposition}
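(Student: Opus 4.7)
The plan is to exploit the identity \eqref{monotone20210831:1} of Proposition \ref{0725mono1}, combined with a sign analysis of its right-hand side and careful asymptotic control near the origin, to extract a pointwise lower (respectively upper) bound for $I'_{a,b,k}(t)$ when $a>0$ (respectively $a<0$). The key structural observation in Case (iii) is that the choice $a_0=2(2k-n)/(n-k)$ together with $g(u)=u^{(n-k)/(2k-n)}$ produces the clean relation $g^{a_0}(t)=t^2$. I would therefore rewrite \eqref{monotone20210831:1} in the form $t^2 I'_{a,b,k}(t)=t_0^2 I'_{a,b,k}(t_0)+(\text{RHS})$ and then pass to the limit $t_0\to 0^+$.

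The sign analysis splits the seven terms on the right-hand side of \eqref{monotone20210831:1} into \emph{main} and \emph{error} contributions. The Newton-MacLaurin inequality applied to the principal curvatures of the $(k-1)$-convex level hypersurface $S_s\subset\mathbb{R}^n$ yields $c_{n,k}H_k^2\ge (k+1)H_{k-1}H_{k+1}$ with the sharp constant $c_{n,k}=k(n-k-1)/(n-k)$; the quadratic $\mathcal{L}$ is non-negative thanks to the hypothesis $b\ge c_{n,k}$; and $\mathcal{M}\le 0$ by \eqref{m}. Since $c_{n,k}\ge 0$ and hence $b>0$, these three main integrands collectively contribute a quantity with sign $\operatorname{sgn}(a)$. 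The remaining four terms all carry the factor $S_k(D^2 u^\varepsilon)=\varepsilon$ and will serve as the error.

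Next I would pass $t_0\to 0^+$. Using the $\varepsilon$-uniform pointwise asymptotics $u^\varepsilon\asymp |x|^{(2k-n)/k}$, $|Du^\varepsilon|\le C|x|^{(k-n)/k}$, $|D^2u^\varepsilon|\le C|x|^{-n/k}$ of Theorem \ref{apu20720}, combined with the starshaped lower bound $|Du^\varepsilon|\ge c|x|^{(k-n)/k}$ and the area control of Lemma \ref{st0723}, one verifies that $t_0^2 I'_{a,b,k}(t_0)\to 0$, that the $S_{t_0}$ boundary term in \eqref{monotone20210831:1} tends to zero, and that each $\int_{t_0}^{t}\int_{S_s}$ integral converges absolutely. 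A routine power-counting in $s$ built on these asymptotics shows every $\varepsilon$-dependent contribution is bounded in absolute value by $C\varepsilon\,t^{nk/(2k-n)+1}$, so dividing by $t^2$ delivers the pointwise estimate for $I'_{a,b,k}(t)$.

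For the weighted geometric inequality I would apply the previous bound at $t=1$. The direct derivative formula for $I'_{a,b,k}$, i.e.\ equation \eqref{0726I} rewritten in $H_m$-notation via the identity $S_m^{ij}u_iu_j=H_{m-1}|Du|^{m+1}$, together with $g(1)=1$ and $g'(1)=(n-k)/(2k-n)$, gives
\begin{align*}
I'_{a,b,k}(1)=a\cdot\frac{n-k}{2k-n}\int_{\partial\Omega}|Du^\varepsilon|^{b+1}H_{k-1}\,dA-a\int_{\partial\Omega}|Du^\varepsilon|^{b}H_k\,dA+O(\varepsilon).
\end{align*}
Combining with the pointwise bound, dividing by $a\neq 0$ (permitted because $b\neq k-1$), and finally sending $\varepsilon\to 0$ while invoking the $C^{1,\alpha}$-convergence of $u^\varepsilon$ to $u$ up to $\partial\Omega$ (coming from the uniform $C^{1,1}$ bounds and standard Evans-Krylov-Schauder theory), yields \eqref{0723geometric}. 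The main technical obstacle will be the asymptotic power-counting near the origin: although $H_m$ and the derivative factors blow up separately as $s\to 0$, the various exponents must conspire so that the $\varepsilon$-dependent integrals scale exactly like $t^{nk/(2k-n)+1}$ and the main integrands are $s$-integrable near zero. Verifying this requires the \emph{two-sided} asymptotic of $|Du^\varepsilon|$, which is precisely where the starshapedness hypothesis enters.
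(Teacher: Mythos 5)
Your proposal is correct and follows essentially the same route as the paper: rewrite the identity as $t^2 I'_{a,b,k}(t)-t_0^2 I'_{a,b,k}(t_0)$, split the right-hand side of \eqref{monotone20210831:1} into the sign-definite Newton--MacLaurin / $\mathcal{L}$ / $\mathcal{M}$ terms and the $\varepsilon$-carrying error terms, use the uniform $C^2$ estimates plus the starshaped lower gradient bound and Lemma \ref{st0723} to show the error scales like $\varepsilon t^{nk/(2k-n)+1}$, send $t_0\to 0^+$, divide by $t^2$, evaluate at $t=1$ via \eqref{0726I}, and let $\varepsilon\to 0$. The only wrinkle worth noting is that the paper's displayed coefficient $\tfrac{n-k}{n-2k}$ before the $H_{k-1}$ integral is a sign typo for $\tfrac{n-k}{2k-n}$, which you compute correctly.
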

\begin{proof}

	We divide two cases.
	
	\textbf{Case1: $a> 0$}\\
By  Proposition \ref{0725mono1}, for any $0< t_0<t\le 1$,  we have
\begin{align}
	t^2  I'_{a,b,k}(t)-t_0^2I'_{a,b,k}(t_0)=&J_{a+a_0,b,k}(t)-J_{a+a_0,b,k}(t_0)\notag\\
	\ge&
	-ab\int_{\Omega_{t_0}\setminus\overline\Omega_t}(u^{\varepsilon})^{a\frac{n-k}{2k-n}+2}|Du^{\varepsilon}|^{a-1}\frac{H_k}{H_{k-1}}S_k\notag\\
	&-(b+1)  \int_{S_{t_0}}(u^{\varepsilon})^{a\frac{n-k}{2k-n}+2}|Du^{\varepsilon}|^{a-1}S_k.
	\end{align}
By the MacLaurin  inequality: $\frac{H_k}{H_{k-1}}\le \frac{C_{n-1}^{k}}{C_{n-1}^{k-1}}\Big(\frac{H_{k-1}}{C_{n-1}^{k-1}}\Big)^{\frac{1}{k-1}}$ and the uniform $C^2$-estimates of $u^\varepsilon$ (we also use  $|Du^{\varepsilon}|\ge c|x|^{1-\frac{n}{k}}$), for any  $x\in \Omega_t^{c}$, we have
\begin{align*}
	(u^{\varepsilon})^{a\frac{n-k}{2k-n}+2}|Du^{\varepsilon}|^{a-1}\frac{H_k}{H_{k-1}}\le& C(u^{\varepsilon})^{a\frac{n-k}{2k-n}+2}|Du^{\varepsilon}|^{a-1}H_{k-1}^{\frac{1}{k-1}}\notag\\
	\le& C|x|^{a\frac{n-k}{k}+2\frac{2k-n}{k}}|x|^{(a-1)\frac{k-n}{k}}|x|^{-1}\notag\\
	=&C|x|^{2-\frac{n}{k}}\le Ct,
	\end{align*}
then
 \begin{align*}
	\int_{\Omega_{t_0}\setminus\overline\Omega_t}(u^{\varepsilon})^{a\frac{n-k}{2k-n}+2}|Du^{\varepsilon}|^{a-1}\frac{H_k}{H_{k-1}}S_k\le C\varepsilon t^{\frac{n(k-1)+2k}{2k-n}}.
\end{align*}
Similarly, we have
\begin{align*}
	\int_{S_{t_0}}(u^{\varepsilon})^{a\frac{n-k}{2k-n}+2}|Du^{\varepsilon}|^{a-1}S_k\le C\varepsilon t_0^{\frac{n(k-1)+2k}{2k-n}},
	\end{align*}
where we use $|S_{t_0}|\le Ct_0^{^{\frac{k(n-1)}{2k-n}}}$ (see Lemma \ref{st0723}).

Thus we get
\begin{align}\label{110725}
	t^2I'_{a,b,k}(t)&-t_0^2 I'_{a,b,k}(t_0)
\ge-C\varepsilon t^{\frac{n(k-1)+2k}{2k-n}}-C\varepsilon t_0^{\frac{n(k-1)+2k}{2k-n}}.
	\end{align}
 By the uniform $C^{2}$ estimates for $u^{\varepsilon}$ and  $|Du^{\varepsilon}|\ge c|x|^{1-\frac{n}{k}}$,
we have for any $t_0\in(0,1]$
\begin{align}
	t_0^2 \Big|I'_{a,b,k}(t_0)\Big|\le Ct_0.
	\end{align}

Let $t_0$ tend to $0$ in \eqref{110725}, we have
\begin{align}
	  I'_{a,b,k}(t)\ge -C\varepsilon t^{\frac{nk}{2k-n}-1}.
	\end{align}
In particular,  taking $t=1$ we have
\begin{align}
 	 I'_{a,b,k}(1)\ge -C\varepsilon.
	\end{align}
On the other hand, by \eqref{0726I}, we have
\begin{align}
	 I'_{a,b,k}(1)
\le a\frac{n-k}{n-2k}\int_{\p\Omega}{|Du^{\varepsilon}|^{b+1}H_{k-1}}- a\int_{\p\Omega}{|Du^{\varepsilon}|^{b}H_{k}}+C\varepsilon.
\end{align}
Consequently, we get
\begin{align}\label{0723final}
\frac{n-k}{2k-n}\int_{\p\Omega}{|Du^\varepsilon|^{b+1}H_{k-1}}- \int_{\p\Omega}{|Du^\varepsilon|^{b}H_{k}}\ge -a^{-1}C\varepsilon
\end{align}
Since $|Du^{\varepsilon}|$ converges to $|Du|$ on $\p \Omega$, we finish the proof of \eqref{0723geometric} by taking $\varepsilon\rightarrow 0$ in \eqref{0723final}.

\textbf{Case2: $a<0$}

Similar as case 1, we have 
\begin{align}
	I'_{a,b,k}(t)\le C\varepsilon t^{\frac{nk}{2k-n}-1}.
	\end{align}
On the other hand, we have
\begin{align}
I'_{a,b,k}(t)\ge 	a\frac{n-k}{n-2k}\int_{\p\Omega}{|Du^{\varepsilon}|^{b+1}H_{k-1}}- a\int_{\p\Omega}{|Du^{\varepsilon}|^{b}H_{k}}.
	\end{align}
Then the desired inequality follows.
	\end{proof}

Next we prove the second part  of Theorem \ref{geometric0725}.
\begin{lemma}\label{0727G2}
	Assume $k=\frac{n}{2}$ and $b\ge\frac{n}{2}-1$. We have
	\begin{align}
	I'_{a, b, k}(t)\ge -C\varepsilon e^{nt},
	\end{align}
In particular, we have
\begin{align}
	\int_{\p\Omega}|Du|^{b+1}H_{k-1}
	\ge \int_{\p\Omega}|Du|^{b}H_{k},
\end{align}
where $u$ is the unique $C^{1,1}$ solution the homogeneous $k$-Hessian equation  \eqref{case3Equa1.3}.
	\end{lemma}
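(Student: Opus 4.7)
The strategy mirrors the proof of Proposition~\ref{0727G1} for the case $k>\frac{n}{2}$, but now specialized to the logarithmic regime $k=\frac{n}{2}$, in which $a_0=0$, $g(u^\varepsilon)=e^{u^\varepsilon}$, $c_{n,k}=k-1$, and $|x|\sim e^{u^\varepsilon}$ thanks to $|u^\varepsilon(x)-\log|x||\le C$. Set $a=b-k+1$ so that $a\geq 0$ under the hypothesis $b\geq \frac{n}{2}-1$. First I would apply the identity of Proposition~\ref{0725mono1}(ii), observing that with $a_0=0$ one has $J_{a,b,k}(t,t_0)=I'_{a,b,k}(t)-I'_{a,b,k}(t_0)$. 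Four groups of terms on the right are nonnegative and can be discarded: (i) $\mathcal{L}=a(|Du^\varepsilon|-H_k/H_{k-1})^2\geq 0$ since $a\geq 0$; (ii) the Newton--MacLaurin-type term $c_{n,k}H_k^2-(k+1)H_{k-1}H_{k+1}\geq 0$ from MacLaurin's inequality applied to the principal curvatures of $S_s$ in the positive cone $\Gamma_{k-1}(\mathbb{R}^{n-1})$; (iii) $-ab\int \mathcal{M}\geq 0$ since $\mathcal{M}\leq 0$ by \eqref{m} and $ab\geq 0$; and (iv) the next-to-last area-integral on $S_t$ (after carefully isolating signs it adds to the right-hand side).

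The only terms that are not manifestly signed are those carrying an explicit factor of $S_k(D^2u^\varepsilon)=\varepsilon$, together with the boundary term at $t_0$. The key step is to estimate each of these by $C\varepsilon\,e^{nt}$ (or $C\varepsilon\,e^{nt_0}$). Using the uniform estimates of Theorem~\ref{apu30720} together with the positive lower bound $|Du^\varepsilon|\geq c|x|^{-1}$, I would establish on a level set $S_s$ the scalings
\[
|x|\sim e^{s},\qquad |Du^\varepsilon|\sim e^{-s},\qquad H_{m}\leq C\,e^{-2ms},\qquad |S_s|\leq C\,e^{(n-1)s},
\]
the last being Lemma~\ref{st0723}. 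Plugging these into each $\varepsilon$-term (for instance, $\varepsilon\int_{S_s} e^{as}|Du^\varepsilon|^{b-k-1}\frac{H_k}{H_{k-1}}\,dA$), the exponents combine to $a-b-1+k+(n-1)=n-1$ after the area factor, and the outer $\int_{t_0}^t e^{ns}\,ds$ integration produces at most $Ce^{nt}$. A parallel count applied to $(b+1)\int_{S_{t_0}} g^a|Du^\varepsilon|^{b-k}S_k\,dA$ produces $C\varepsilon e^{nt_0}$.

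Next I would let $t_0\to -\infty$. Using the explicit formula \eqref{0726I} together with the same pointwise bounds, each of the three pieces of $I'_{a,b,k}(t_0)$ admits the scaling
\[
\int_{S_{t_0}} e^{at_0}|Du^\varepsilon|^{b+1}H_{k-1}\,dA\ \lesssim\ e^{(a-b+1)t_0}\cdot e^{(n-1)t_0}\cdot (\text{curvature factors}),
\]
and a similar expression for the term with $H_k$, while the $\varepsilon$-term is absorbed by $C\varepsilon e^{nt_0}$. The main bookkeeping obstacle is verifying that the exponents in these scalings are strictly positive so that $I'_{a,b,k}(t_0)\to 0$ as $t_0\to-\infty$; this is where the assumption $b\ge \tfrac{n}{2}-1$ is used, since it makes $a\ge 0$ and guarantees $a-b+k=1$ after cancellations. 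Combining the two steps yields $I'_{a,b,k}(t)\geq -C\varepsilon e^{nt}$ for every $t\in(-\infty,0]$.

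Finally, setting $t=0$ and using \eqref{0726I} again on $\partial\Omega=S_0$, where $g=e^{u^\varepsilon}=1$, together with the identities $S_k^{ij}u_iu_j=|Du|^{k+1}H_{k-1}$ and $S_{k+1}^{ij}u_iu_j=|Du|^{k+2}H_k$, gives
\[
a\int_{\partial\Omega} |Du^\varepsilon|^{b+1}H_{k-1}\,dA-a\int_{\partial\Omega}|Du^\varepsilon|^{b}H_k\,dA\geq -C\varepsilon,
\]
using $b+1-k=a$. For $b>\tfrac{n}{2}-1$ one divides by $a>0$, sends $\varepsilon\to 0$ along the subsequence from Theorem~\ref{main07203} (which converges in $C^{1,\alpha}$ up to $\partial\Omega$), and obtains the claimed weighted geometric inequality; the boundary case $b=\tfrac{n}{2}-1$ follows by passing to the limit $b\downarrow \tfrac{n}{2}-1$ or by a direct check since both integrands vanish in the monotonicity formula identity at that value. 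The most delicate obstacle throughout is the exponent bookkeeping needed to ensure that \emph{all} $\varepsilon$-terms are uniformly $O(\varepsilon e^{nt})$ and that the boundary contribution at $t_0$ vanishes; aside from this, the argument is a direct logarithmic analogue of the power-decay case handled in Proposition~\ref{0727G1}.
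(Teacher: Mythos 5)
The central step of your plan—sending $t_0\to-\infty$ and concluding $I'_{a,b,k}(t_0)\to 0$—does not work, and this is exactly where the paper departs from the argument used for $k>\frac n2$. Carry out the scaling count on $S_{t_0}$ for $k=\frac n2$: with $g=e^{u^\varepsilon}$, $a_0=0$, $|x|\sim e^{t_0}$, $|Du^\varepsilon|\sim e^{-t_0}$, $H_{m}\sim e^{-mt_0}$, and $|S_{t_0}|\lesssim e^{(n-1)t_0}$, the two curvature terms in \eqref{0726I} (those with $|Du|^{b+1}H_{k-1}$ and $|Du|^{b}H_{k}$) carry the exponent
\begin{equation*}
a-(b+1)-(k-1)+(n-1)=a-b-k+n-1=(b-k+1)-b-k+n-1=n-2k=0,
\end{equation*}
so each is $O(1)$, not $o(1)$, as $t_0\to-\infty$. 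Only the third, $\varepsilon$-carrying, term has exponent $n>0$. Thus $I'_{a,b,k}(t_0)$ does not tend to zero; it only stays bounded. The cancellation you invoked ($a-b+k=1$) is automatic from $a=b-k+1$ and has nothing to do with the hypothesis $b\ge\frac n2-1$ (whose only role here is to make $a\ge 0$ so that the $\mathcal{L}$- and $\mathcal{M}$-terms have the right sign). In the $k>\frac n2$ case the analogous exponent works out to $-1$, which is why multiplying by the prefactor $g^{a_0}(t_0)=t_0^{2}$ forces $t_0^2 I'(t_0)\to 0$; for $k=\frac n2$ that prefactor degenerates to $1$ and the trick evaporates.

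The paper's proof avoids this obstruction by a genuinely different device. From the monotonicity identity one gets, for all $s\le t\le 0$, $I'_{a,b,k}(t)-I'_{a,b,k}(s)\ge -C\varepsilon e^{nt}$, i.e.\ $I'_{a,b,k}(s)\le I'_{a,b,k}(t)+C\varepsilon e^{nt}$. Integrating this in $s$ over $(t_0,t)$ gives
\begin{equation*}
I_{a,b,k}(t)-I_{a,b,k}(t_0)\le \bigl(I'_{a,b,k}(t)+C\varepsilon e^{nt}\bigr)(t-t_0),
\end{equation*}
and dividing by $t-t_0\to+\infty$ while using that $I_{a,b,k}$ itself is uniformly bounded (which \emph{is} true, since its exponent is also $0$) yields $I'_{a,b,k}(t)\ge -C\varepsilon e^{nt}$. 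So the paper trades information about the derivative at $t_0$ for information about the primitive $I_{a,b,k}$, which is precisely the quantity that stays controlled. Your remaining steps (dropping the nonnegative $\mathcal{L}$-, Newton--MacLaurin-, and $\mathcal{M}$-terms; evaluating at $t=0$ with $g=1$; dividing by $a>0$ and sending $\varepsilon\to 0$) are consistent with the paper, but without the integration step the argument does not close; a limiting argument in $b$ is also not needed once the corrected approach is used.
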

\begin{proof}
By Proposition \ref{0725mono1} and similar as the argument in Proposition \ref{0727G1}, for any $-\infty< t_0\le s<t\le 0$, we have
\begin{align*}
I'_{a,b,k}(t)-I'_{a,b,k}(s)\ge - C\varepsilon e^{nt}.
\end{align*}
Integrating the above from $t_0$ to $t$, we have
\begin{align*}
	I_{a,b,k}(t)-I_{a,b,k}(t_0)\le
	\Big(I'_{a,b,k}(t)+ C\varepsilon e^{nt}\Big) (t-t_0),
	\end{align*}
Then
\begin{align*}
	\Big(I'_{a,b,k}(t)+ C\varepsilon e^{nt}\Big)(-tt_0^{-1}+1)\ge -t_0^{-1}(I_{a,b,k}(t)-I_{a,b,k}(t_0))\ge Ct_0^{-1}.
\end{align*}
let $t_0$ tend to $0$ and note that  $I_{a,b,k}(t)$ is uniformly bounded which follows from the $C^2$-estimates of $u^{\varepsilon}$ and $|Du^{\varepsilon}|\ge c |x|^{1-\frac{n}{k}}$, 
we obtain
\begin{align*}
	I'_{a,b,k}(t)\ge -C\varepsilon e^{nt}.
	\end{align*}
On the other hand, we have\begin{align*}
	I'_{a,b,k}(0)\le& a\int_{\p\Omega}|Du^{\varepsilon}|^{b+1}H_{k-1}
	-a\int_{\p\Omega}|Du^{\varepsilon}|^{b}H_{k}+C\varepsilon.
\end{align*}
Combining the above two inequalities and noting that $|Du^{\varepsilon}|\rightarrow |Du|$,
we get
\begin{align}
	\int_{\p\Omega}|Du|^{b+1}H_{k-1}
	\ge \int_{\p\Omega}|Du|^{b}H_{k}.
	\end{align}
	\end{proof}

When $k<\frac{n}{2}$, we have the following inequality.

\begin{lemma}
	Let $u^{\varepsilon}$ be the solution of the approximating k-Hessian equation.
	Assume $k<\frac{n}{2}$,
	and $b\ge c_{n,k}$, then for any $-\infty< t_0\le t\le -1$, we have
	\begin{align}
		t^2I'_{a,b,k}(t)  -t_0^2I'_{a,b,k}(t_0)\ge -C\varepsilon|t|^{-\frac{nk}{n-2k}-1}.
	\end{align}
	
\end{lemma}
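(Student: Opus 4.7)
The plan is to apply Proposition \ref{0725mono1}(i) with $a_0 = -\tfrac{2(n-2k)}{n-k}$, for which a direct computation gives $g^{a_0}(t) = t^2$ (using $g(u^\varepsilon) = (-u^\varepsilon)^{(n-k)/(2k-n)}$), so that $J_{a+a_0,b,k}(t,t_0) = t^2 I'_{a,b,k}(t) - t_0^2 I'_{a,b,k}(t_0)$. The hypothesis $b \ge c_{n,k}$, combined with $c_{n,k} - (k-1) = \tfrac{n-2k}{n-k} > 0$, forces $a = b-k+1 > 0$. Of the seven contributions on the right-hand side of the identity, five are non-negative and can be discarded: the $\mathcal{L}$-term (with prefactor $b - c_{n,k} \ge 0$), the $-ab \mathcal{M}$-term (since $\mathcal{M} \le 0$ by \eqref{m} and $ab > 0$), the boundary $(b+1)\int_{S_t}$ term, the interior $(b+1)\int\!\int g^{a+a_0-1} g' |Du^\varepsilon|^{b-k} S_k$ term (as $g' > 0$ on $\{u^\varepsilon < 0\}$), and the Newton--MacLaurin term (whose non-negativity, combined with $\mathcal{L}$, parallels the reasoning in Proposition \ref{0727G1}; the principal curvatures of $S_s$ lie in $\Gamma_{k-1}^+$ by the $k$-convexity of $u^\varepsilon$). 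After discarding,
\begin{align*}
J_{a+a_0,b,k}(t,t_0) \ge -ab \int_{\Omega_{t_0}\setminus\overline{\Omega_t}} g^{a+a_0}|Du^\varepsilon|^{b-k} \tfrac{H_k}{H_{k-1}} S_k\, dx - (b+1) \int_{S_{t_0}} g^{a+a_0}|Du^\varepsilon|^{b-k} S_k\, dA.
\end{align*}

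Both remaining terms will then be estimated using the uniform $C^2$ bounds in Theorem \ref{apu10720} together with the area bound in Lemma \ref{st0723}. On each level set $S_s$, the fundamental-solution scaling yields $|x| \sim |s|^{-k/(n-2k)}$, $|Du^\varepsilon| \sim |s|^{(n-k)/(n-2k)}$, $\tfrac{H_k}{H_{k-1}} \lesssim |s|^{k/(n-2k)}$, and $|S_s| \lesssim |s|^{-k(n-1)/(n-2k)}$. Plugging these in together with $S_k = \varepsilon$ and $a = b-k+1$, the $a,b$-dependence in the integrand cancels and the pointwise product is of order $\varepsilon |s|^{-k/(n-2k)}$; multiplying by the surface area gives an integral over $S_s$ of order $\varepsilon |s|^{-kn/(n-2k)}$. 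Integrating in $s$ over $[t_0, t]$ (and performing the analogous scaling calculation on $S_{t_0}$) produces the claimed $|t|$-dependent bound, with the $|t_0|$-contribution vanishing as $t_0 \to -\infty$ because the corresponding exponent is strictly negative for $k < n/2$ and $k \ge 1$.

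The main obstacle will be the careful exponent bookkeeping required to produce the precise target power $|t|^{-nk/(n-2k)-1}$ from these substitutions, as well as making rigorous the non-negativity of the combined $\mathcal{L}$ and Newton--MacLaurin contributions on the level sets (this uses the quadratic structure of $\mathcal{L}$ in $\tfrac{H_k}{H_{k-1}}$ together with the sharp Newton inequality on the principal curvatures, exactly as in the Case 1 analysis of Proposition \ref{0727G1}). Once these algebraic identities are verified, the proof structure runs in parallel with the Case 1 argument, with sign changes reflecting that the pole of $u^\varepsilon$ is now interior at $x=0$ rather than at infinity and that $u^\varepsilon < 0$ throughout $\Omega_r$.
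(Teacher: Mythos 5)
Your methodology is the right one and mirrors how the paper handles the other two cases in Proposition \ref{0727G1} and Lemma \ref{0727G2}: apply Proposition \ref{0725mono1}(i) with $a_0=-\tfrac{2(n-2k)}{n-k}$ (so that $g^{a_0}(t)=(-t)^{a_0\frac{n-k}{2k-n}}=t^2$), observe that $b\ge c_{n,k}$ forces $a=b-k+1\ge c_{n,k}-(k-1)=\tfrac{n-2k}{n-k}>0$, discard the five nonnegative contributions (Newton's inequality gives $c_{n,k}H_k^2\ge (k+1)H_{k-1}H_{k+1}$; $\mathcal L\ge0$ from $b\ge c_{n,k}$; $\mathcal M\le0$ with $ab>0$; the $(b+1)\int_{S_t}$ term; and $g'>0$ since $g(u)=(-u)^{-\frac{n-k}{n-2k}}$ is increasing on $\{u<0\}$), and bound the remaining two by scaling.

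However, there is a gap you yourself flag and do not close: the exponent bookkeeping. Carrying it through, your own scaling heuristics give, per level $S_s$, an integral of order $\varepsilon|s|^{-\frac{nk}{n-2k}}$, and integrating in $s$ from $t_0$ to $t$ (with $|t|\le|t_0|$ and exponent $\frac{nk}{n-2k}>1$) gives a bound of order $\varepsilon|t|^{1-\frac{nk}{n-2k}}$; the surface term $\int_{S_{t_0}}$ scales the same way, and is dominated by $\varepsilon|t|^{1-\frac{nk}{n-2k}}$ since $|t_0|\ge|t|$. That is, the computation you outline proves
\begin{align*}
t^2 I'_{a,b,k}(t)-t_0^2 I'_{a,b,k}(t_0)\;\ge\; -C\varepsilon\,|t|^{1-\frac{nk}{n-2k}},
\end{align*}
whose exponent differs by $2$ from the stated $-\tfrac{nk}{n-2k}-1$. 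Since $|t|\ge1$, the stated bound is strictly stronger and does not follow from the argument. The correct parallel to Proposition \ref{0727G1} (where $t^2I'(t)-t_0^2I'(t_0)\ge -C\varepsilon t^{\frac{n(k-1)+2k}{2k-n}}$ is first established and only after dividing by $t^2$ and letting $t_0\to0$ does one reach $I'(t)\ge -C\varepsilon t^{\frac{nk}{2k-n}-1}$) suggests that the exponent $-\tfrac{nk}{n-2k}-1$ in the Lemma statement belongs to a bound on $I'_{a,b,k}(t)$ alone, obtained after dividing by $t^2$ and letting $t_0\to-\infty$, and that the displayed inequality as written has a sign typo in the exponent. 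Your proposal neither reproduces the stated exponent nor notes this discrepancy, so as it stands it does not establish the Lemma as printed; to complete the argument you should carry the exponent computation to the end and either correct the target exponent to $1-\tfrac{nk}{n-2k}$ or restate the conclusion for $I'_{a,b,k}(t)$ after passing to the limit $t_0\to-\infty$.
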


{\bf Acknowledgements:}
The second author was supported by  National Natural Science Foundation of China (grants 11721101 and 12141105) and National Key Research and Development Project (grants SQ2020YFA070080). 

\bibliographystyle{plain}
\bibliography{interiorkHessian-2023-3-12}

\begin{thebibliography}{10}

\bibitem{AM2020CVPDE}
Virginia Agostiniani and Lorenzo Mazzieri.
\newblock Monotonicity formulas in potential theory.
\newblock {\em Calc. Var. Partial Differential Equations}, 59(1):Paper No. 6,
  32, 2020.

\bibitem{BedfordDemailly1988}
Eric Bedford and Jean-Pierre Demailly.
\newblock Two counterexamples concerning the pluri-complex {G}reen function in
  {${\bf C}^n$}.
\newblock {\em Indiana Univ. Math. J.}, 37(4):865--867, 1988.

\bibitem{BedfordTaylor1979}
Eric Bedford and B.~A. Taylor.
\newblock Variational properties of the complex {M}onge-{A}mp\`ere equation.
  {II}. {I}ntrinsic norms.
\newblock {\em Amer. J. Math.}, 101(5):1131--1166, 1979.

\bibitem{Blocki2000}
Zbigniew B\l~ocki.
\newblock The {$C^{1,1}$} regularity of the pluricomplex {G}reen function.
\newblock {\em Michigan Math. J.}, 47(2):211--215, 2000.

\bibitem{Salani2008}
B.~Brandolini, C.~Nitsch, P.~Salani, and C.~Trombetti.
\newblock Serrin-type overdetermined problems: an alternative proof.
\newblock {\em Arch. Ration. Mech. Anal.}, 190(2):267--280, 2008.

\bibitem{CNSIII}
L.~Caffarelli, L.~Nirenberg, and J.~Spruck.
\newblock The {D}irichlet problem for nonlinear second-order elliptic
  equations. {III}. {F}unctions of the eigenvalues of the {H}essian.
\newblock {\em Acta Math.}, 155(3-4):261--301, 1985.

\bibitem{CLN1969}
S.~S. Chern, Harold~I. Levine, and Louis Nirenberg.
\newblock Intrinsic norms on a complex manifold.
\newblock In {\em Global {A}nalysis ({P}apers in {H}onor of {K}. {K}odaira)},
  pages 119--139. Univ. Tokyo Press, Tokyo, 1969.

\bibitem{cw2001cpam}
Kai-Seng Chou and Xu-Jia Wang.
\newblock A variational theory of the {H}essian equation.
\newblock {\em Comm. Pure Appl. Math.}, 54(9):1029--1064, 2001.

\bibitem{Demailly1987}
Jean-Pierre Demailly.
\newblock Mesures de {M}onge-{A}mp\`ere et mesures pluriharmoniques.
\newblock {\em Math. Z.}, 194(4):519--564, 1987.

\bibitem{guan1994cpde}
Bo~Guan.
\newblock The {D}irichlet problem for a class of fully nonlinear elliptic
  equations.
\newblock {\em Comm. Partial Differential Equations}, 19(3-4):399--416, 1994.

\bibitem{GuanBo1998}
Bo~Guan.
\newblock The {D}irichlet problem for complex {M}onge-{A}mp\`ere equations and
  regularity of the pluri-complex {G}reen function.
\newblock {\em Comm. Anal. Geom.}, 6(4):687--703, 1998.

\bibitem{gb2007imrn}
Bo~Guan.
\newblock On the regularity of the pluricomplex {G}reen functions.
\newblock {\em Int. Math. Res. Not. IMRN}, (22):Art. ID rnm106, 19, 2007.

\bibitem{guan2014duke}
Bo~Guan.
\newblock Second-order estimates and regularity for fully nonlinear elliptic
  equations on {R}iemannian manifolds.
\newblock {\em Duke Math. J.}, 163(8):1491--1524, 2014.

\bibitem{gpf2002am}
Pengfei Guan.
\newblock The extremal function associated to intrinsic norms.
\newblock {\em Ann. of Math. (2)}, 156(1):197--211, 2002.

\bibitem{itw2004cpde}
Nina Ivochkina, Neil Trudinger, and Xu-Jia Wang.
\newblock The {D}irichlet problem for degenerate {H}essian equations.
\newblock {\em Comm. Partial Differential Equations}, 29(1-2):219--235, 2004.

\bibitem{Klimek1985}
M.~Klimek.
\newblock Extremal plurisubharmonic functions and invariant pseudodistances.
\newblock {\em Bull. Soc. Math. France}, 113(2):231--240, 1985.

\bibitem{krylov1989}
N.~V. Krylov.
\newblock Smoothness of the payoff function for a controllable diffusion
  process in a domain.
\newblock {\em Izv. Akad. Nauk SSSR Ser. Mat.}, 53(1):66--96, 1989.

\bibitem{krylov1994}
N.~V. Krylov.
\newblock Weak interior second order derivative estimates for degenerate
  nonlinear elliptic equations.
\newblock {\em Differential Integral Equations}, 7(1):133--156, 1994.

\bibitem{labutin2002duke}
Denis~A. Labutin.
\newblock Potential estimates for a class of fully nonlinear elliptic
  equations.
\newblock {\em Duke Math. J.}, 111(1):1--49, 2002.

\bibitem{Lempert1981}
L\'{a}szl\'{o} Lempert.
\newblock La m\'{e}trique de {K}obayashi et la repr\'{e}sentation des domaines
  sur la boule.
\newblock {\em Bull. Soc. Math. France}, 109(4):427--474, 1981.

\bibitem{LiLuc}
Yanyan Li and Luc Nguyen.
\newblock Existence and uniqueness of green's functions to nonlinear yamabe
  problems.
\newblock {\em arXiv:2001.00993, to appear in Communications on Pure and
  Applied Mathematics}.

\bibitem{MaZhang2022arxiv}
Xinan Ma and Dekai Zhang.
\newblock The exterior {D}irichlet problem for the homogeneous $k$-{H}essian
  equation.
\newblock {\em arXiv:2207.13504}, 2022.

\bibitem{Reilly1973}
Robert~C. Reilly.
\newblock On the {H}essian of a function and the curvatures of its graph.
\newblock {\em Michigan Math. J.}, 20:373--383, 1973.

\bibitem{Trudinger1997}
Neil~S. Trudinger.
\newblock On new isoperimetric inequalities and symmetrization.
\newblock {\em J. Reine Angew. Math.}, 488:203--220, 1997.

\bibitem{trudinger1997cpde}
Neil~S. Trudinger.
\newblock Weak solutions of {H}essian equations.
\newblock {\em Comm. Partial Differential Equations}, 22(7-8):1251--1261, 1997.

\bibitem{trudingerwang1997tmna}
Neil~S. Trudinger and Xu-Jia Wang.
\newblock Hessian measures. {I}.
\newblock volume~10, pages 225--239. 1997.
\newblock Dedicated to Olga Ladyzhenskaya.

\bibitem{tw1999am}
Neil~S. Trudinger and Xu-Jia Wang.
\newblock Hessian measures. {II}.
\newblock {\em Ann. of Math. (2)}, 150(2):579--604, 1999.

\bibitem{trudingerwang2002jfa}
Neil~S. Trudinger and Xu-Jia Wang.
\newblock Hessian measures. {III}.
\newblock {\em J. Funct. Anal.}, 193(1):1--23, 2002.

\bibitem{urbas1990indiana}
John I.~E. Urbas.
\newblock On the existence of nonclassical solutions for two classes of fully
  nonlinear elliptic equations.
\newblock {\em Indiana Univ. Math. J.}, 39(2):355--382, 1990.

\bibitem{Wang1994}
Xu~Jia Wang.
\newblock A class of fully nonlinear elliptic equations and related
  functionals.
\newblock {\em Indiana Univ. Math. J.}, 43(1):25--54, 1994.

\end{thebibliography}

\end{document}